\theoremstyle{plain}
\newtheorem{lemma}{Lemma}
\newtheorem{theorem}[lemma]{Theorem}
\newtheorem*{thmnonumber}{Theorem 1.4}
\newtheorem{proposition}[lemma]{Proposition}
\newtheorem{corollary}[lemma]{Corollary}
\newtheorem{remark}[lemma]{Remark}
\newtheorem{question}[lemma]{Question}
\newtheorem*{defi}{Definition}
\newtheorem*{notation}{Notation}
\numberwithin{equation}{section}
\numberwithin{lemma}{section}
\newcommand{\norm}[1]{\left\lVert#1\right\rVert}
\DeclareMathOperator*{\esssup}{ess\,sup}
\DeclareMathOperator*{\essinf}{ess\,inf}
\newcommand{\upin}{\rotatebox[origin=c]{-90}{$\supset$}}
\begin{document}
\title{Limits of action convergent graph sequences\\with unbounded $(p,q)$-norms}
\author{Aranka Hrušková\\
\normalsize \textit{Alfréd Rényi Insitute of Mathematics, Reáltanoda utca 13-15, Budapest, Hungary}}
\date{}

\maketitle
\begin{abstract}
The recently developed notion of action convergence by Backhausz and Szegedy \cite{action} unifies and generalises the dense (graphon) and local-global (graphing) convergences of graph sequences. This is done through viewing graphs as operators and examining their dynamical properties.
Suppose $(A_n)_n^\infty$ is a sequence of operators representing graphs, Cauchy with respect to the action metric. If $(A_n)_n^\infty$ has uniformly bounded $(p,q)$-norms where $(p,q)$ is any pair in $[1,\infty)\times(1,\infty)$, then Backhausz and Szegedy prove that $(A_n)_n^\infty$ has a limit operator which, moreover, must be self-adjoint and positivity-preserving. In the present work, we construct a large class of graph sequences whose only uniformly bounded $(p,q)$-norm is the $(\infty,1)$-norm, but which converge nonetheless. We show that the limit operators in this case are not unique, not self-adjoint, and need not be positivity-preserving. In particular, in the action convergence language, this means that the space of graphops is not compact. By identifying these multiple limits, we also demonstrate that $c$-regularity is not invariant under weak equivalence, where $c$ is the eigenvalue of the identity function, when the identity function is an eigenfunction.
\end{abstract}

\section{Introduction}

The central object of the relatively young field of graph limit theory is a sequence $(G_n)_{n=1}^\infty$ of finite graphs, for which we seek to find a limit object. The two most thoroughly developed convergence notions, based on different methods of sampling small subgraphs from large graphs, are, however, only applicable when either the number of edges in $G_n$ is asymptotically quadratic in terms of the number of vertices \cite{BorgsChayesLovSosVesz, Lov, LovaszSzegedy06DenseLimits} or when the maximum degree $\Delta(G_n)$ is uniformly bounded above \cite{Lov, Remco}. We call the former sequences \emph{dense} and have \emph{graphons} for them as limit objects, while the latter are an extreme case of \emph{sparse} sequences and their limits are \emph{graphings} \cite{Hatami2014}. This leaves out the territory of sequences in which the number of edges grows subquadratically but superlinearly in terms of the number of vertices -- that includes for example the hypercubes, the incidence graphs of finite projective planes, and many regimes of the Erdős-Rényi model $\mathcal{G}(n,p(n))$. A number of authors have in recent years defined various extensions of the classical notions mentioned above with the aim of reaching the world of sequences with intermediate densities \cite{BorgsChayesHoldenExchange, BorgsChayesCohnZhaoLpII, BorgsChayesCohnZhaoLpI, sconvDolezal, Frenkel, s-graphons, SparsityNesetMend, Szegedy2015logconvergence}. In this paper, we are interested in \emph{action convergence} introduced by Backhausz and Szegedy \cite{action}, which unifies and generalises graphons and graphings in the common framework of \emph{$P$-operators}.

\begin{defi}[$P$-operator]
Let $(\Omega,\mathcal{A},\mu)$ be a probability space. Then we call a linear operator $A\colon L^\infty(\Omega)\to L^1(\Omega)$ a P-operator if the norm
\[
\norm{A}_{\infty\to1}=\sup_{v\in L^\infty(\Omega)}\frac{\norm{Av}_1}{\norm{v}_\infty}
\]
is finite.
\end{defi}

As the name and the definition of the limit object above suggest, action convergence lays emphasis on the dynamical properties of graphs. There is a number of natural operators associated to a finite graph, of which the most prominent are the adjacency matrix, the discrete Laplacian, and the transition matrix of the simple random walk; in this paper, we will always identify a finite graph with its adjacency matrix. The key to the definition of the convergence is the notion of $\emph{profiles}$ that allow us to compare operators even when they act on $L$-spaces of different probability spaces. A $k$-profile $\mathcal{S}_k(A)$ of a $P$-operator $A$ is a collection of probability measures on $\mathbb{R}^{2k}$ encoding the actions of $A$ and giving rise to the following metrisation.

\begin{defi}
Let $A\colon L^\infty(\Omega_A)\to L^1(\Omega_A)$ and $B\colon L^\infty(\Omega_B)\to L^1(\Omega_B)$ be $P$-operators. Then their action distance is
\[
d_M(A,B)=\sum_{k=1}^\infty\frac{d_H(\mathcal{S}_k(A),\mathcal{S}_k(B))}{2^k},
\]
where $d_H$ is the Hausdorff distance. A sequence $(A_n)_n^\infty$ of $P$-operators \emph{action converges} to a $P$-operator $A$ if and only if $\lim_nd_M(A_n,A)=0$. When $d_M(A,B)=0$, we say that $A$ and $B$ are \emph{weakly equivalent}.
\end{defi}

Importantly, Backhausz and Szegedy show in \cite{action} that a sequence $(G_n)_n^\infty$ of graphs with uniformly bounded maximum degree converges locally-globally to a graphing $\mathcal{G}$ if and only if their adjacency matrices $\left(A(G_n)\right)_n^\infty$ action converge to $\mathcal{G}$, and that a sequence $(G_n)_n^\infty$ of graphs converges to a graphon $W$ if and only if their scaled adjacency matrices $\left(\frac{A(G_n)}{|V(G_n)|}\right)_n^\infty$ action converge to $W$.
In our setting, a graphon $W\colon[0,1]^2\to[0,1]$ becomes the $P$-operator that sends $f\in L^2\left([0,1],\lambda\right)$ to
\[
(Wf)(x)=\int_0^1W(x,y)f(y) \, d\lambda(y),
\]
and a graphing $(\mathcal{G},\nu)$ becomes the $P$-operator that sends $v\in L^2\left(V(\mathcal{G}),\nu\right)$ to
\[
(\mathcal{G}v)(x)=\sum_{xy\in E(\mathcal{G})} v(y).
\]
In particular, both graphons and graphings are not just operators from $L^\infty$ to $L^1$, but from $L^2$ to $L^2$, and satisfy that $\norm{W}_{2\to2}\leq1$ and $\norm{\mathcal{G}}_{2\to2}\leq d$, where $d$ is the maximum degree of the graphing $\mathcal{G}$. Both of them are also self-adjoint and positivity-preserving -- any $P$-operator that has these two properties is called a $\emph{graphop}$.
Given a sequence of $P$-operators, we can in general deduce information about the existence of its limit and about the limit's properties if we can assume uniform boundedness of some $(p,q)$-norms like the $(2,2)$-norms above.

\begin{lemma}[Sequential compactness, Lemma 2.6 in \cite{action}]
Let $(A_n)_{n=1}^\infty$ be a sequence of $P$-operators with uniformly bounded $\norm{\cdot}_{\infty\to1}$ norms.
Then $(A_n)_{n=1}^\infty$ has a Cauchy subsequence with respect to the distance $d_M$.
\end{lemma}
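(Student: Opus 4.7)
The plan is a tightness plus diagonalisation argument at the level of profiles. Let $C=\sup_n\norm{A_n}_{\infty\to 1}<\infty$. For each fixed $k$, every measure in $\mathcal{S}_k(A_n)$ is the pushforward of the probability measure on $\Omega_{A_n}$ under some map
\[
\omega\mapsto\bigl(v_1(\omega),\ldots,v_k(\omega),(A_nv_1)(\omega),\ldots,(A_nv_k)(\omega)\bigr)
\]
with $\norm{v_i}_\infty\leq 1$. Consequently, every $\nu\in\mathcal{S}_k(A_n)$ belongs to the set
\[
K_k=\left\{\nu\in\mathcal{P}(\mathbb{R}^{2k}):\operatorname{supp}\nu\subseteq[-1,1]^k\times\mathbb{R}^k\ \text{and}\ \int|y_i|\,d\nu(y)\leq C\ \text{for}\ k<i\leq 2k\right\}.
\]
I would first verify that $K_k$ is tight: the first $k$ coordinates are confined to $[-1,1]$, while Markov's inequality applied to the uniform first-moment bound $\int|y_i|\,d\nu\leq C$ controls the tails of the remaining coordinates. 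By Prokhorov's theorem, the weak closure $\overline{K_k}$ is weakly compact, and the Lévy–Prokhorov metric turns it into a compact metric space.

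Each closure $\overline{\mathcal{S}_k(A_n)}$ is then a non-empty closed subset of $\overline{K_k}$ (non-empty because $v_1=\cdots=v_k=0$ contributes the Dirac mass at the origin). By the standard fact that the hyperspace of non-empty closed subsets of a compact metric space is itself compact under the Hausdorff distance, the sequence $\bigl(\overline{\mathcal{S}_k(A_n)}\bigr)_n$ admits a $d_H$-convergent subsequence for every fixed $k$. A Cantor diagonal argument then produces a single subsequence $(A_{n_j})_j$ along which $d_H\bigl(\mathcal{S}_k(A_{n_i}),\mathcal{S}_k(A_{n_j})\bigr)\to 0$ as $i,j\to\infty$ for every $k$ simultaneously.

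To conclude the $d_M$-Cauchy property, I would use a tail estimate. Since the Lévy–Prokhorov metric, and hence the induced Hausdorff distance, is bounded by $1$, the tail $\sum_{k>K}2^{-k}d_H\bigl(\mathcal{S}_k(A_{n_i}),\mathcal{S}_k(A_{n_j})\bigr)\leq 2^{-K}$ is made smaller than $\varepsilon/2$ by choosing $K$ large. The diagonal subsequence then produces $J$ beyond which each of the finitely many $k\leq K$ contributes at most $\varepsilon/2$, yielding $d_M(A_{n_i},A_{n_j})<\varepsilon$ for all $i,j\geq J$.

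The main obstacle is the first paragraph: one must pin down a compact ambient set of measures and exploit the $\infty\to 1$ bound precisely to prevent mass from escaping to infinity in the output coordinates of the profiles. This is exactly where uniform boundedness of $\norm{\cdot}_{\infty\to 1}$ enters, and it is the only moment input that the rest of the proof requires. Once this step is in place, the remainder is routine hyperspace compactness, a Cantor diagonal, and a geometric-series tail estimate.
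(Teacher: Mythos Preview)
The paper does not prove this lemma; it is quoted verbatim as Lemma~2.6 of Backhausz--Szegedy \cite{action} and used as a black box, so there is no in-paper argument to compare against. Your sketch is the standard route and is correct: the uniform $(\infty,1)$-bound gives a first-moment condition on the output coordinates of every profile measure, which together with the trivial $[-1,1]^k$ bound on the input coordinates yields tightness; Prokhorov then makes each $K_k$ a compact metric space under $d_{LP}$, the hyperspace of its closed subsets is compact under $d_H$, and a diagonal extraction plus the geometric tail of $d_M$ finishes the job. One small point worth making explicit when you write it up: $K_k$ is not only tight but weakly closed, since $\nu\mapsto\int|y_i|\,d\nu$ is lower semicontinuous for weak convergence, so $\overline{K_k}=K_k$ and you stay inside the same moment class after passing to limits.
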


\begin{theorem}[Existence of limit object, Theorem 2.9 in \cite{action}]\label{thm:limit}
Let $p\in[1,\infty)$ and $q\in[1,\infty]$. Let $(A_n)_{n=1}^\infty$ be a sequence of $P$-operators, Cauchy with respect to the distance $d_M$ and with uniformly bounded $\norm{\cdot}_{p\to q}$ norms. Then there is a $P$-operator $A$ such that $\lim_nd_M(A_n,A)=0$ and $\norm{A}_{p\to q}\leq\limsup_n\norm{A_n}_{p\to q}$.
\end{theorem}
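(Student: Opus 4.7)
The plan is to construct the limit operator $A$ via a Loeb/ultraproduct construction, which is the standard technique for sequential compactness results for $P$-operators. First, since $(A_n)$ is Cauchy in $d_M$, the profile sets $\mathcal{S}_k(A_n)\subset\mathcal{P}(\mathbb{R}^{2k})$ are Cauchy in Hausdorff distance for each fixed $k$, and hence converge to a compact limit set $\mathcal{S}_k^*$. The goal is then to produce a single $P$-operator $A$ with $\mathcal{S}_k(A)=\mathcal{S}_k^*$ for every $k$ and with $\norm{A}_{p\to q}\leq\limsup_n\norm{A_n}_{p\to q}$.

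For the construction, I would fix a non-principal ultrafilter $\omega$ on $\mathbb{N}$ and form the ultraproduct Loeb space $(\Omega_*,\mathcal{A}_*,\mu_*)=\prod_{n,\omega}(\Omega_{A_n},\mathcal{A}_{A_n},\mu_{A_n})$, then define $A$ informally by $A[f_n]_\omega:=[A_n f_n]_\omega$, using the standard-part map to turn internal functions into Loeb-measurable ones. The uniform $(p,q)$-bound $C:=\sup_n\norm{A_n}_{p\to q}$ is essential here: for any bounded sequence $(f_n)$ with $\sup_n\norm{f_n}_p<\infty$ it yields $\sup_n\norm{A_nf_n}_q\leq C\sup_n\norm{f_n}_p$, so the ultralimit genuinely represents an $L^q$-function on $\Omega_*$. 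Extending by density from standard parts of internal $L^\infty$ functions gives a bounded operator $A\colon L^p(\Omega_*)\to L^q(\Omega_*)$ with the claimed norm bound, and since $L^\infty\hookrightarrow L^p$ and $L^q\hookrightarrow L^1$ on a probability space, its restriction is a $P$-operator.

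To identify the profiles I would argue both inclusions. For $\mathcal{S}_k(A)\subseteq\mathcal{S}_k^*$: every tuple $v_1,\dots,v_k\in L^\infty(\Omega_*)$ is $L^p$-approximable by standard parts of internal tuples $v_j=[v_j^{(n)}]_\omega$ with uniformly bounded sup-norms, and the joint law of $(v_j,Av_j)_{j\leq k}$ is the weak $\omega$-limit of the joint laws of $(v_j^{(n)},A_nv_j^{(n)})_{j\leq k}$; each of these lies in $\mathcal{S}_k(A_n)$, and Cauchyness forces the limit into $\mathcal{S}_k^*$. For $\mathcal{S}_k^*\subseteq\mathcal{S}_k(A)$: any $\mu\in\mathcal{S}_k^*$ is a Hausdorff-limit of measures $\mu_n\in\mathcal{S}_k(A_n)$ arising from tuples $v_1^{(n)},\dots,v_k^{(n)}$, and the internal tuple they define on $\Omega_*$ has joint distribution $\mu$, realising $\mu$ as a profile of $A$. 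Translating Hausdorff-closeness of profiles back into $d_M$ then gives $d_M(A_n,A)\to0$.

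The main obstacle is the measure-theoretic bookkeeping on the Loeb space: internal bounded functions are not automatically Loeb measurable, so one must invoke Loeb's theorem on standard parts together with a density argument showing that every $L^p$-function on $\Omega_*$ is approximable by standard parts of internal $L^\infty$ functions with controlled $p$-norm. The endpoint $q=\infty$ also needs separate handling, since there the ultralimit must be interpreted via essential suprema rather than integrals. Once these technicalities are in place, the three conclusions follow by the formal verification sketched above.
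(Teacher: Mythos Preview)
The paper does not contain a proof of this statement: Theorem~\ref{thm:limit} is quoted verbatim from Backhausz and Szegedy \cite{action} (their Theorem~2.9) and is used here only as background, so there is no ``paper's own proof'' to compare your proposal against.

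That said, your ultraproduct/Loeb-space sketch is indeed the route taken in \cite{action}, so in spirit you are reproducing the original argument rather than offering an alternative. A couple of points in your sketch would need tightening if you were to write it out in full. First, you will not get $\mathcal{S}_k(A)=\mathcal{S}_k^*$ on the nose but only $\overline{\mathcal{S}_k(A)}=\mathcal{S}_k^*$; the limit set $\mathcal{S}_k^*$ is closed (as a Hausdorff limit of closures), whereas $\mathcal{S}_k(A)$ typically is not, and your ``reverse inclusion'' argument actually only lands the target measure in the closure of $\mathcal{S}_k(A)$. This is harmless for the conclusion $d_M(A_n,A)\to0$, since the Hausdorff distance is insensitive to taking closures. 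Second, the well-definedness of $A[f_n]_\omega:=[A_nf_n]_\omega$ on the Loeb space is exactly where the hypothesis $p<\infty$ enters: two internal representatives of the same Loeb $L^\infty$-function can differ on a set of positive internal measure along the ultrafilter while still agreeing Loeb-almost-everywhere, and one needs the $(p,q)$-bound with $p<\infty$ (together with uniform integrability) to conclude that the images agree in $L^q$. You allude to this under ``measure-theoretic bookkeeping'', but it is the crux of why the theorem fails for $p=\infty$, which is precisely the phenomenon the present paper is exploring.
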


For $c\in\mathbb{R}$, a $P$-operator is called $c$-regular if the identity function $\mathds{1}$ is an eigenfunction with eigenvalue $c$, i.e., $A\mathds{1}=c\mathds{1}$.

\begin{proposition}[Section 3 in \cite{action}]\label{prop:graphops_remain_graphops}
Let $p,q\in[1,\infty]$ be fixed and let $(A_n)_n^\infty$ be a sequence of $P$-operators with uniformly bounded $(p,q)$-norms. Suppose that $(A_n)_n^\infty$ action converges to a $P$-operator $A$. Then
\begin{enumerate}[label=(\alph*)]
    \item if $q\notin\{1,\infty\}$ and $A_n$ is self-adjoint for every $n$, then $A$ is also self-adjoint,
    \item if $p\neq\infty$ and $A_n$ is positivity-preserving for every $n$, then $A$ is also positivity-preserving, and    
    \item if $p\neq\infty$, $c\in\mathbb{R}$ and $A_n$ is $c$-regular for every $n$, then $A$ is also $c$-regular.
\end{enumerate}
\end{proposition}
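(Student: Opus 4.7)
The overall strategy is to translate each algebraic property of the $A_n$ into a constraint on their profile measures, and then transfer that constraint to a corresponding profile measure of $A$ via the Hausdorff convergence $\mathcal{S}_k(A_n)\to\mathcal{S}_k(A)$, using the uniform $(p,q)$-norm bound to handle the unbounded ``second half'' of the coordinates. Writing $\mu_{A,v_1,\ldots,v_k}\in\mathcal{S}_k(A)$ for the joint law of $(v_1,\ldots,v_k,Av_1,\ldots,Av_k)$ with $\norm{v_i}_\infty\leq 1$, I will also freely use that Theorem~\ref{thm:limit} gives $\norm{A}_{p\to q}\leq\limsup_n\norm{A_n}_{p\to q}<\infty$. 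The recurring theme is that weak convergence of measures on $\mathbb{R}^{2k}$ does not by itself preserve integrals of unbounded coordinate functionals, and the $(p,q)$-bound is precisely what repairs this.

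For (a), self-adjointness of $A_n$ is the identity $\int x_1y_2\,d\mu_n=\int x_2y_1\,d\mu_n$ for every $\mu_n\in\mathcal{S}_2(A_n)$. Given $v,w\in L^\infty(\Omega_A)$ of norm at most one, I choose $\mu_n\in\mathcal{S}_2(A_n)$ converging weakly to $\mu:=\mu_{A,v,w}$ and ask for the same symmetry to survive. The $q$-th moments of each $y_j$-marginal are uniformly bounded by $\norm{A_n}_{p\to q}^q$; since $q>1$, this yields uniform integrability of the $y_j$ and, because $|x_i|\leq 1$ a.s., of the products $x_iy_j$. Passing to the weak limit therefore preserves the integrals, the symmetry transfers to $\mu$, and as $v,w$ were arbitrary, $A$ is self-adjoint.

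For (b), pick $v\geq 0$ with $\norm{v}_\infty\leq 1$, set $\mu:=\mu_{A,v}$, and take $\mu_n\in\mathcal{S}_1(A_n)$ with $\mu_n\to\mu$ weakly, coming from test functions $v_n\in L^\infty(\Omega_{A_n})$ that need not themselves be non-negative. Since the first marginal of $\mu$ sits on $[0,\infty)$ and $x\mapsto x^-$ is bounded continuous, $\int v_n^-\,d\nu_{A_n}\to 0$; using $(v_n^-)^p\leq v_n^-$ for $v_n^-\leq 1$, this upgrades to $\norm{v_n^-}_p\to 0$ for every $p<\infty$. Positivity-preservation of $A_n$ applied to $v_n^+$ yields $(A_nv_n)^-\leq A_nv_n^-$, and the $(p,q)$-bound produces $\norm{(A_nv_n)^-}_q\to 0$. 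Hence $\int y^-\,d\mu_n\to 0$; truncating $y^-$ at any $M$ and using weak convergence, $\int(y^-\wedge M)\,d\mu=0$, so $y\geq 0$ $\mu$-a.s., i.e.\ $Av\geq 0$. For (c), $\mathcal{S}_1(A_n)$ contains $\delta_{(1,c)}$, so Hausdorff convergence yields $v_n\in L^\infty(\Omega_A)$ with $\norm{v_n}_\infty\leq 1$ and $\mu_{A,v_n}\to\delta_{(1,c)}$. Convergence to a point mass is convergence in probability, so $v_n\to\mathds{1}$ and $Av_n\to c\mathds{1}$ in probability; dominated convergence (using $p<\infty$) gives $v_n\to\mathds{1}$ in $L^p$, and then $\norm{A}_{p\to q}<\infty$ gives $Av_n\to A\mathds{1}$ in $L^q$, hence also in probability. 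Matching the two probability limits forces $A\mathds{1}=c\mathds{1}$ a.s.

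The principal obstacle in every part is the passage to the weak limit in integrals involving the unbounded coordinates $y_i$ of the profile measures. The hypothesis $q>1$ in (a) supplies uniform integrability of the $y$-marginals from the uniform $L^q$-bound; the hypothesis $p<\infty$ in (b) and (c) lets a bounded sequence of test functions that converges in probability be upgraded to an $L^p$-convergent one, so that the $(p,q)$-bound can be propagated to $L^q$-convergence of the $A_nv_n$. The sharpness of these conditions is precisely what is exploited in the examples constructed in the body of this paper.
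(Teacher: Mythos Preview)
The paper does not itself prove Proposition~\ref{prop:graphops_remain_graphops}; it is quoted from Section~3 of Backhausz--Szegedy \cite{action}, so there is no in-paper proof to compare against. On its own terms your argument is sound and follows the natural line: encode each property as a constraint on profile measures, and transfer it through the Hausdorff/weak convergence, with the $(p,q)$-bound supplying uniform integrability in part~(a) and $L^p$-to-$L^q$ continuity in parts~(b) and~(c).

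Two small points are worth tightening. In part~(b) you call $x\mapsto x^-$ bounded continuous; it is not bounded on $\mathbb{R}$, but what you actually use is that the first marginals of $\mu_n$ and $\mu$ are supported on $[-1,1]$, where $x^-$ agrees with the bounded continuous function $x^-\wedge 1$. In part~(c), Theorem~\ref{thm:limit} only produces \emph{some} limit with $\|\cdot\|_{p\to q}\leq\limsup_n\|A_n\|_{p\to q}$, not necessarily the given $A$; the gap is closed by the remark in Section~\ref{section:preliminaries} that $(p,q)$-norms can be read off from $1$-profiles and are therefore invariant under weak equivalence, so any two action limits of the same sequence share the same $(p,q)$-norm.
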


In the present work, we are interested in optimality of the restrictions on $p$ and $q$ in Theorem~\ref{thm:limit} and Proposition~\ref{prop:graphops_remain_graphops}. We quickly show that part (a) of Proposition~\ref{prop:graphops_remain_graphops} must hold for all $(p,q)\in[1,\infty]^2\setminus\{(\infty,1)\}$,
but our main result is identifying a large class of sequences $(A_n)_n^\infty$ in which $A_n$ is self-adjoint and positivity-preserving for every $n$ and whose only uniformly bounded $(p,q)$-norms are the $(\infty,1)$-norms, but which at the same time have multiple limit objects, none of which are self-adjoint and some of which are not positivity-preserving. In other words, we show that a sequence of graphops does not necessarily action converge to a graphop.
The graphops in our sequences arise as the adjacency matrices of finite simple graphs $G$ that contain a vertex of degree $|V(G)|-1$, that is a vertex adjacent to every other vertex of $G$. We call $G^+$ the graph on $|V(G)|+1$ vertices obtained from $G$ by adding a vertex like that.

\begin{theorem}\label{thm:TheTheorem}
Let $(G_n)_n^\infty$ be a sequence of finite graphs with $|V(G_n)|\to\infty$, whose adjacency operators action converge to a $P$-operator $A\colon L^\infty(\Omega,\nu)\to L^1(\Omega,\nu)$, where $(\Omega,\nu)$ is separable.
Then there is a $\nu$-filter $\mathcal{F}$ on $\Omega$ such that for any $\nu$-ultrafilter $\mathcal{U}$ extending $\mathcal{F}$, both
\begin{equation*}
\begin{aligned}[c]
A^+\colon L^\infty(\Omega,\nu)&\to L^1(\Omega,\nu)\\
\text{given by }\left(A^+g\right)(\omega)&=(Ag)(\omega)+\phi_\mathcal{U}(g)
\end{aligned}
\qquad\text{ and }\qquad
\begin{aligned}[c]
A^-\colon L^\infty(\Omega,\nu)&\to L^1(\Omega,\nu)\\
\text{given by }\left(A^-g\right)(\omega)&=(Ag)(\omega)-\phi_\mathcal{U}(g)
\end{aligned}
\end{equation*}
are action limits of $(G_n^+)_n^\infty$, where $\phi_\mathcal{U}\colon L^\infty(\Omega,\nu)\to\mathbb{R}$ is the functional sending $\lim_{n\to\infty}\sum_{i=1}^n\alpha_{n,i}\chi_{E_{n,i}}$ to $\lim_{n\to\infty}\sum_{i=1}^n\alpha_{n,i}\mathds{1}_{E_{n,i}\in\mathcal{U}}$.
\end{theorem}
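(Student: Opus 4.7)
My plan is to exploit the block decomposition of $A(G_n^+)$. Writing each $f^+\in L^\infty(V(G_n^+))$ as a pair $(f,c)$ with $f=f^+|_{V(G_n)}$ and $c=f^+(v^\ast)$, one has
\[
A(G_n^+)(f,c)=\Bigl(A(G_n)f+c\,\mathbf{1}_{V(G_n)},\ \sum_{x\in V(G_n)}f(x)\Bigr).
\]
Consequently, the $k$-profile measure of $A(G_n^+)$ on a tuple $(f_{i,n},c_{i,n})_{i=1}^{k}$ is a convex combination of mass $|V(G_n)|/(|V(G_n)|+1)\to 1$ on the pushforward of the uniform measure on $V(G_n)$ under $x\mapsto\bigl(f_{i,n}(x),\,A(G_n)f_{i,n}(x)+c_{i,n}\bigr)_i$, plus a single Dirac atom at the $v^\ast$-point of mass $1/(|V(G_n)|+1)\to 0$. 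The atom is asymptotically negligible, so the profile of $A(G_n^+)$ on $(f^+_{i,n})_i$ is, up to an additive $c_{i,n}$-shift in each output coordinate, the profile of $A(G_n)$ on $(f_{i,n})_i$.

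For the easy half of the Hausdorff bound $d_M(A(G_n^+),A^+)\to 0$, fix $g_1,\ldots,g_k\in L^\infty(\Omega,\nu)$. Using action convergence $A(G_n)\to A$, choose $f_{i,n}$ on $V(G_n)$ whose joint $A(G_n)$-profile converges to the $A$-profile on $(g_i)_i$, and set $c_{i,n}:=\phi_{\mathcal{U}}(g_i)$, constant in $n$ for the fixed $\mathcal{U}\supseteq\mathcal{F}$. The observation above shows that the resulting $A(G_n^+)$-profile converges to the $A^+$-profile on $(g_i)_i$, for every such $\mathcal{U}$; taking $c_{i,n}:=-\phi_{\mathcal{U}}(g_i)$ instead gives the symmetric argument for $A^-$. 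This part of the proof works for any $\mathcal{F}$ containing the $\nu$-conull filter, so no delicate choice is needed here.

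The hard half — every $A(G_n^+)$-profile is close to some $A^+$-profile — is the main obstacle and is what forces the careful construction of $\mathcal{F}$. Given arbitrary $(f_{i,n},c_{i,n})_i$, one extracts a subsequence with $c_{i,n}\to a_i\in[-1,1]$ and, using $A(G_n)\to A$, finds $g_i\in L^\infty(\Omega,\nu)$ matching the shifted limit profile. One must then show that \emph{a single} ultrafilter $\mathcal{U}\supseteq\mathcal{F}$ realises $\phi_{\mathcal{U}}(g_i)=a_i$ for all $i$ simultaneously. I would construct $\mathcal{F}$ by a countable diagonalisation using separability of $(\Omega,\nu)$: enumerate a dense family of $L^\infty$-simple functions together with the possible limit patterns of $(c_{i,n})$ compatible with the finite graphs, and include in $\mathcal{F}$ precisely those measurable sets forced to lie in every consistent ultrafilter. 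The filter extension theorem then supplies the required $\mathcal{U}$. The sign symmetry yielding $A^-$ as a limit corresponds to the $L^\infty$-isometry $(f,c)\mapsto(f,-c)$, which exchanges $+\phi_{\mathcal{U}}$ with $-\phi_{\mathcal{U}}$ at the level of profiles.
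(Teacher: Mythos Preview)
Your block decomposition and the identification of the two Hausdorff halves are correct, and your ``easy half'' is fine: given $g_1,\dots,g_k$, choosing $f_{i,n}$ with matching $A(G_n)$-profile and setting $c_{i,n}=\phi_{\mathcal U}(g_i)$ indeed produces $A(G_n^+)$-profiles converging to $\mathcal D_{A^+}(g_1,\dots,g_k)$, for any free $\nu$-ultrafilter $\mathcal U$.

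The hard half, however, contains a genuine gap. You propose to take the $g_i$ handed to you by action convergence and then \emph{force} $\phi_{\mathcal U}(g_i)=a_i$ by a clever choice of $\mathcal F$. This cannot work. For a fixed ultrafilter $\mathcal U$, the value $\phi_{\mathcal U}(g)$ is determined by $g$; but the same $A$-profile (and hence, up to the indeterminacy in action convergence, the same $g_i$) arises from tuples $(f_{i,n},c_{i,n})$ with \emph{arbitrary} limits $a_i\in[-1,1]$. Concretely, take $G_n$ the empty graph, $f_{1,n}\equiv 0$, and $c_{1,n}\equiv 1$: the $A$-profile forces $g_1=0$ a.e., so $\phi_{\mathcal U}(g_1)=0$ for every $\mathcal U$, yet you need $\phi_{\mathcal U}(g_1)=1$. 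No diagonalisation over ``consistent ultrafilters'' can repair this, because you are asking a single $\mathcal U$ to assign multiple values to the same function.

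What the paper actually does is the opposite: keep $\mathcal U$ fixed and \emph{modify the function}. The key lemma (Theorem~\ref{thm:FilterMagic}) constructs $\mathcal F$ so that for any $\mathcal U\supseteq\mathcal F$, any $g\in B_1^{L^\infty}$, any target $a\in[-1,1]$ and any $\varepsilon>0$, there is $g_{a,\varepsilon}\in B_1^{L^\infty}$ with $\|g-g_{a,\varepsilon}\|_1<\varepsilon$, $\|Ag-Ag_{a,\varepsilon}\|_1<\varepsilon$, and $\phi_{\mathcal U}(g_{a,\varepsilon})\approx a$. The construction builds a nested grid of sets $E_{m,k}$ of vanishing measure on which, crucially, $\|A\chi_{E_{m,k}}\|_1\to 0$; separability of $L^1(\Omega)$ is used not to enumerate ``limit patterns of $(c_{i,n})$'' but to find, among an uncountable chain of subsets, two whose $A$-images are $L^1$-close, yielding such an $E_{m,k}$. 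One then sets $g_{a,\varepsilon}=g+(a-\phi_{\mathcal U}(g))\chi_{E_{m,k}}$ for suitable $m,k$. Replacing each $g_i$ by $(g_i)_{\pm v_i,\varepsilon}$ gives an $A^\pm$-profile close to $\mathcal D_A(g_1,\dots,g_k)\oplus v$, which is the missing direction. Your sketch of $\mathcal F$ does not produce sets with small $\|A\chi_E\|_1$, and without that the perturbation changes the $A$-profile uncontrollably.
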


In the construction of the limit objects, we utilise a functional $\phi_\mathcal{U}$ from the dual space $(L^\infty)^*$ of $L^\infty$ which are not in the canonical embedding of $L^1$ in $(L^\infty)^*$. In particular, $\phi_\mathcal{U}\colon L^\infty\to\mathbb{R}$ arises from a finitely additive measure given by an ultrafilter $\mathcal{U}$, where $\mathcal{U}$ is any extension of a suitable countably generated filter $\mathcal{F}$. In the course of proving that such a filter $\mathcal{F}$ exists, we establish Theorem~\ref{thm:FilterMagic} which we believe is of independent interest. It states that for \emph{any} linear operator $A\colon L^\infty(\Omega)\to L^1(\Omega)$, where $\Omega$ is a finite measure space, there is an ultrafilter $\mathcal{U}$ such that for any function $f\in L^\infty$ and a number $a\in\mathbb{R}$, we can find functions $f_a$ which are arbitrarily close to $f$ in the 1-norm and also whose images under $A$ are arbitrarily close to $Af$ in the 1-norm, but such that $\phi_\mathcal{U}(f_a)=a$, independently of the value of $\phi_\mathcal{U}(f)$.

The rest of the paper is organised as follows. In Section~\ref{section:preliminaries}, we define $k$-profiles and prove the extension of part (a) of Proposition~\ref{prop:graphops_remain_graphops}.
In Section~\ref{section:stars}, we warm up with the most basic case of our construction which is the star graphs $(S_n)_n^\infty$. We recall the structure of $(L^\infty)^*$, explain how the functional $\phi_\mathcal{U}$ plays the role of the high-degree vertex, prove that the $(\infty,1)$-norm is not continuous with respect to $d_M$ by showing that $\lim_n\norm{S_n}_{\infty\to1}>\norm{\lim_n S_n}_{\infty\to1}$, establish that no action limit of $S_n$ can be self-adjoint, and prove a special case of Theorem \ref{thm:TheTheorem}.
The main results are in Section~\ref{section:main} in which we first describe what the limiting $k$-profiles of $\left(G_n^+\right)_n^\infty$ must be. Then we prove Theorem~\ref{thm:FilterMagic} and relying on it, we go on to prove Theorem~\ref{thm:TheTheorem}.
We close off in Section~\ref{section:closeoff} with two open questions.

\section{Preliminaries}\label{section:preliminaries}

Let $A\colon L^\infty(\Omega)\to L^1(\Omega)$ be a $P$-operator. Then for every $f\in L^\infty(\Omega)$, the pair $(f,Af)$ represents an observation of the dynamical properties of $A$. We want to compress this observation into a form which will not involve $\Omega$ and also neglect some inessential features of $A$ (see Remark ...), and do so by considering the law of the random variable
\begin{align*}
(f,Af)\colon & \Omega\to\mathbb{R}^2\\
 & \omega\mapsto(f(\omega),Af(\omega)).
\end{align*}
More generally, taking a $k$-tuple $(f_1,\dots,f_k)$ of functions in $L^\infty(\Omega)$ provides an even finer observation $(f_1,\dots,f_k,Af_1,\dots,Af_k)$, and by compressing and collecting all of these, we arrive at a set of probability measures which captures the various ways in which $A$ acts on functions.
\begin{defi}[$k$-profile]
    Let $A\colon L^\infty(\Omega)\to L^1(\Omega)$ be a $P$-operator and $k$ a positive integer. The $k$-profile $\mathcal{S}_k(A)$ of $A$ is
    \[\mathcal{S}_k(A):=\left\{\mathcal{D}(f_1,\dots,f_k,Af_1,\dots,Af_k) : f_1,\dots,f_k\in B_1^{L^\infty}\right\}\subset\mathcal{P}\left(\mathbb{R}^{2k}\right),\]
    where $B_1^{L^\infty}$ is the closed unit ball of $L^\infty(\Omega)$ and $\mathcal{D}(f_1,\dots,f_k,Af_1,\dots,Af_k)$ is the joint distribution of $f_1,\dots,f_k$, $Af_1,\dots,Af_k$, i.e., the image measure given by the map $\omega\mapsto\left(f_1(\omega),\dots,f_k(\omega),Af_1(\omega),\dots,Af_k(\omega)\right)$.
\end{defi}

We will also be using the shorthand
\[
\mathcal{D}_A(f_1,\dots,f_k):=\mathcal{D}\left(f_1,\dots,f_k,Af_1,\dots,Af_k\right).
\]

For a hands-on example, when $A$ is an $n\times n$ matrix then its $k$-profile is the set of all discrete probability measures on $\mathbb{R}^{2k}$ of the form
\[
\frac{1}{n}\sum_{j=1}^n\delta_{((v_1)_j,\dots,(v_k)_j,(v_1A)_j,\dots,(v_kA)_j)}
\]
where $v_1,\dots,v_k$ are real vectors with entries in $[-1,1]$ and $\delta_x$ denotes the Dirac measure concentrated on $x\in\mathbb{R}^{2k}$. (We assumed the uniform distribution on $[n]$ here.)

As explained in the Introduction, we measure the similarity of $k$-profiles of different $P$-operators by the Hausdorff distance.

\begin{defi}[Hausdorff pseudometric]
Let $(M,d)$ be a metric space. Then the Hausdorff pseudometric $d_H$ on the power set $\mathscr{P}(M)$ is given by
\[d_H(X,Y)=\max\left\{\sup_{x\in X}\inf_{y\in Y} d(x,y),\sup_{y\in Y}\inf_{x\in X} d(x,y)\right\}\]
for all subsets $X$, $Y$ of $M$.
\end{defi}
Note that $d_H(X,Y)=0$ if and only if $\overline{X}=\overline{Y}$.

It remains to determine what metric we consider on the set $\mathcal{P}\left(\mathbb{R}^{2k}\right)$ of probability measures on $\mathbb{R}^{2k}$. Since we want to metrise the weak convergence of measures, we choose the Lévy-Prokhorov metric.
\begin{defi}[Lévy-Prokhorov metric]
Let $(M,d)$ be a metric space, $\mathcal{B}(M)$ its associated Borel $\sigma$-algebra, and $\mathcal{P}(M)$ the set of all probability measures on $(M,\mathcal{B}(M))$. The Lévy-Prokhorov metric $d_{LP}$ on $\mathcal{P}(M)$ is given by
\[
d_{LP}(\eta,\mu)=\inf\{\varepsilon>0 :
\eta(U)\leq\mu(U^\varepsilon)+\varepsilon
\text{ and }
\mu(U)\leq\eta(U^\varepsilon)+\varepsilon
\text{ for every Borel set } U\subset M
\},\]
where $U^\varepsilon=\{x\in M : d(x,U)<\varepsilon\}$.
\end{defi}

Note that for any two probability measures $\eta,\mu\in\mathcal{P}(\mathbb{R}^n)$, we have $d_{LP}(\eta,\mu)\leq1$, hence also for any two $P$-operators $A, B$, their distance satisfies $d_M(A,B)\leq\sum_{k=1}^\infty\frac{1}{2^k}=1$.

Let now $(\cdot,\cdot)_A$ denote the bilinear form on functions from $L^\infty(\Omega)$ given by a $P$-operator $A$ as follows:
\[
(f,g)_A:=\int_\Omega (Af)g \,d\mu=\mathbb{E}[(Af)g].
\]
\begin{defi}
A $P$-operator $A\colon L^\infty(\Omega)\to L^1(\Omega)$ is
\begin{enumerate}
    \item \emph{self-adjoint} if $(f,g)_A=(g,f)_A$ for all $g,f\in L^\infty(\Omega)$,
    \item \emph{positivity-preserving} if $(Af)(\omega)\geq0$ holds for a.a. $\omega\in\Omega$ whenever $f(\omega)\geq0$ holds for a.a. $\omega\in\Omega$,
    \item \emph{$c$-regular} if $A\mathds{1}=c\mathds{1}$, for $c\in\mathbb{R}$,
    \item a \emph{graphop} if it is self-adjoint and positivity-preserving.
\end{enumerate}
\end{defi}

While the definition of a $P$-operator requires the $(\infty,1)$-operator norm to be finite, the general definition of a $(p,q)$-norm, for $p,q\in[1,\infty]$, is
\[
\norm{A}_{p\to q}:=\sup_{f\in L^\infty}\frac{\norm{Af}_q}{\norm{f}_p}.
\]
Since the $q$-norms $\norm{\cdot}_q$ are increasing with $q$, the operator norms $\norm{\cdot}_{p\to q}$ are increasing with $q$ and decreasing with $p$, meaning in particular that
\[\norm{A}_{\infty\to1}\leq\norm{A}_{p\to q}\]
for any linear operator $A$ and any $p,q\in[1,\infty]$.

Following the lines of the argument made in \cite{inducedpqNorm} for finite matrices, we quickly prove the following.

\begin{lemma}\label{lemma:adjointnorms}
Let $p, q$ be in $[1,\infty]$, and let $p', q'$ be their Hölder conjugates. Let $A$ and $A^*$ be $P$-operators satisfying that
\[(v,w)_A=(w,v)_{A^*} \text{ for all }v,w\in L^\infty.\]
Then $\norm{A}_{p\to q}=\norm{A^*}_{q'\to p'}$.
\end{lemma}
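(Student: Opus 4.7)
The plan is to reduce the identity to the symmetry of the bilinear form $(\cdot,\cdot)_A$ combined with the duality between $L^r$ and $L^{r'}$, following the finite-dimensional argument of \cite{inducedpqNorm}. The backbone is a dual representation, valid for every $h \in L^1(\Omega)$ and every $r \in [1,\infty]$, of the form
\[
\norm{h}_r = \sup\left\{ \left|\int_\Omega h \, k \, d\mu\right| : k \in L^\infty(\Omega),\; \norm{k}_{r'} \leq 1\right\},
\]
interpreted as $+\infty$ on both sides when $h \notin L^r$. Assuming this, and writing $\int (Af) g \, d\mu = (f,g)_A = (g,f)_{A^*} = \int (A^*g) f \, d\mu$ via the hypothesis, I apply the representation first with $h = Af$, $r = q$ and then with $h = A^*g$, $r = p'$ to unfold
\[
\norm{A}_{p \to q} = \sup_{\|f\|_p \leq 1,\, \|g\|_{q'} \leq 1} \left|\int_\Omega (A^*g) f \, d\mu\right| = \sup_{\|g\|_{q'} \leq 1} \norm{A^*g}_{p'} = \norm{A^*}_{q' \to p'},
\]
with all suprema taken over $L^\infty(\Omega)$.

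The real work lies in justifying the dual representation while keeping the test functions $k$ inside $L^\infty$, since the $P$-operator norms are defined as suprema over $L^\infty$ only. For $r \in (1,\infty)$, classical duality yields the identity with $k$ ranging over all of $L^{r'}$, and density of $L^\infty$ in $L^{r'}$ on the probability space $\Omega$ lets me restrict to $k \in L^\infty$ without lowering the supremum. For $r = 1$ the choice $k = \operatorname{sgn}(h) \in L^\infty$ attains $\norm{h}_1$ directly, and for $r = \infty$ normalised indicators $\chi_E/\mu(E)$ of super-level sets $\{|h| > c\}$ approach $\norm{h}_\infty$ from below.

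The main subtlety is the boundary case where one side of the sought identity is infinite: I need to check that $\norm{A}_{p\to q} = \infty$ forces $\norm{A^*}_{q'\to p'} = \infty$ as well. For $q < \infty$ with $Af \notin L^q$ for some admissible $f$, I would truncate $Af$ at level $n$ and apply the standard $L^q$ duality to the truncation, producing $k_n \in L^\infty$ with $\norm{k_n}_{q'} \leq 1$ and $\int (Af) k_n \to \infty$; for $q = \infty$ with $Af \notin L^\infty$, signed indicators $\operatorname{sgn}(Af)\chi_{E_n}/\mu(E_n)$ on sets $E_n \subseteq \{|Af| > n\}$ of positive measure serve the same purpose with $\norm{k_n}_1 = 1$. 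Once the representation is in hand in full generality, nothing about $A$ beyond the bilinear-form symmetry supplied in the hypothesis is used.
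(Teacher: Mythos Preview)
Your proof is correct and follows essentially the same route as the paper's: both express $\norm{Af}_q$ via the dual representation $\norm{h}_r=\sup\{|\int hk|:k\in L^\infty,\ \norm{k}_{r'}\le 1\}$, swap the two suprema using the hypothesis $(f,g)_A=(g,f)_{A^*}$, and refold to obtain $\norm{A^*}_{q'\to p'}$. The paper simply writes the chain of equalities without comment, whereas you additionally justify the dual representation over $L^\infty$ test functions and treat the endpoint and infinite-norm cases explicitly---extra care that the paper omits but that does not alter the underlying argument.
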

\begin{proof}
\begin{align*}
\norm{A}_{p\to q}
=\sup_{f\in L^\infty}\frac{\norm{Af}_q}{\norm{f}_p}
&=\sup_{f\in L^\infty}\left\{\norm{Af}_q : \norm{f}_p\leq1\right\}\\
&=\sup_{f\in L^\infty}\left\{\sup_{g\in L^\infty}\left\{\left|\int(Af)g\right| : \norm{g}_{q'}\leq1\right\} : \norm{f}_p\leq1\right\}\\
&=\sup_{g\in L^\infty}\left\{\sup_{f\in L^\infty}\left\{\left|\int f(A^*g)\right| : \norm{f}_{p}\leq1\right\} : \norm{g}_{q'}\leq1\right\}\\
&=\sup_{g\in L^\infty}\left\{\norm{A^*g}_{p'} : \norm{g}_{q'}\leq1\right\}\\
&=\sup_{g\in L^\infty}\frac{\norm{A^*g}_{p'}}{\norm{g}_{q'}}\\
&=\norm{A^*}_{q'\to p'}
\end{align*}
\end{proof}

\begin{corollary}\label{coroll:selfadjoint_just_infty1}
The assumption in Proposition~\ref{prop:graphops_remain_graphops} (a) can be extended from $(p,q)\in[1,\infty]\times(1,\infty)$ to $(p,q)\in[1,\infty]^2\setminus\{(\infty,1)\}$.
\end{corollary}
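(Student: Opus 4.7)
The plan is to reduce Corollary~\ref{coroll:selfadjoint_just_infty1} to Proposition~\ref{prop:graphops_remain_graphops}(a) by exhibiting, for every admissible pair $(p,q)\in[1,\infty]^2\setminus\{(\infty,1)\}$, an auxiliary pair $(p^*,q^*)\in[1,\infty]\times(1,\infty)$ for which a uniform bound $\sup_n\norm{A_n}_{p\to q}\le M$ forces $\sup_n\norm{A_n}_{p^*\to q^*}<\infty$. The engine is Lemma~\ref{lemma:adjointnorms}: since each $A_n$ is self-adjoint, $\norm{A_n}_{q'\to p'}=\norm{A_n}_{p\to q}$, where $p',q'$ are the H\"older conjugates of $p$ and $q$.

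For $q\in(1,\infty)$ nothing is needed. For $q\in\{1,\infty\}$ with $p\in(1,\infty)$, the conjugate $p'$ lies in $(1,\infty)$ and the Lemma directly transports the hypothesis to the H\"older-reflected pair $(q',p')$, which is already in the range where Proposition~\ref{prop:graphops_remain_graphops}(a) applies. The case $(p,q)=(1,\infty)$ is immediate from the monotonicity of $\norm{\cdot}_{p\to q}$ in both arguments recorded earlier in this section, which gives $\norm{A_n}_{2\to 2}\le\norm{A_n}_{1\to\infty}\le M$. Finally, for the corners $(1,1)$ and $(\infty,\infty)$ the Lemma yields simultaneous uniform bounds on $\norm{A_n}_{1\to 1}$ and $\norm{A_n}_{\infty\to\infty}$, and one application of the Riesz--Thorin interpolation theorem at $\theta=1/2$ produces a uniform bound on $\norm{A_n}_{2\to 2}$, after which Proposition~\ref{prop:graphops_remain_graphops}(a) closes the argument.

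The single pair $(\infty,1)$ is precisely the one on which this machinery collapses: its H\"older reflection is $(\infty,1)$ itself, and $\norm{\cdot}_{\infty\to 1}$ is the smallest of the $(p,q)$-norms, so neither the Lemma nor monotonicity can manufacture an auxiliary bound with target exponent in $(1,\infty)$ -- exactly the phenomenon this paper goes on to investigate. The only step worth verifying with care is the applicability of Riesz--Thorin to real-valued operators on the probability space $\Omega$, which is standard by complexification; otherwise the argument is a straightforward case split.
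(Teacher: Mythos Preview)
Your argument is correct and follows the same reduction strategy as the paper's proof. The only difference is at the corners $(1,1)$ and $(\infty,\infty)$: instead of invoking Riesz--Thorin, the paper simply observes that monotonicity already gives $\norm{A_n}_{2\to1}\le\norm{A_n}_{1\to1}$ (respectively $\norm{A_n}_{\infty\to2}\le\norm{A_n}_{\infty\to\infty}$), and since Lemma~\ref{lemma:adjointnorms} forces $\norm{A_n}_{2\to1}=\norm{A_n}_{\infty\to2}$ for self-adjoint $A_n$, one lands directly in the range of Proposition~\ref{prop:graphops_remain_graphops}(a) without any interpolation or complexification.
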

\begin{proof}
    Let $p\in(1,\infty)$.
    If $\norm{A}_{p\to1}$ or $\norm{A}_{p\to\infty}$ are uniformly bounded then so are $\norm{A}_{\infty\to p'}$ or $\norm{A}_{1\to p'}$ by Lemma~\ref{lemma:adjointnorms}, and so Proposition~\ref{prop:graphops_remain_graphops} (a) gives that $A$ is self-adjoint.

    If $\norm{A}_{1\to1}$, $\norm{A}_{1\to\infty}$ or $\norm{A}_{\infty\to\infty}$ are uniformly bounded then so are, say,
    \[\norm{A}_{2\to1}=\norm{A}_{\infty\to2},\]
    by monotonicity of the $(p,q)$-norms. Now we can apply Proposition~\ref{prop:graphops_remain_graphops} (a) to $\norm{A}_{\infty\to2}$ to conclude that $A$ is self-adjoint.
\end{proof}

Finally, as remarked in \cite[Section 2]{action}, the $(p,q)$-norms can be read out from the 1-profiles of $P$-operators (and are hence invariant under weak equivalence). For a measure $\mu$ on $\mathbb{R}^2$, let $\mu_y\in\mathcal{P}(\mathbb{R})$ denote its $y$-axis marginal. Then in the case of the $(\infty,1)$-norm, we can get it as follows:
\begin{align*}
    \infty>\norm{B}_{\infty\to1}=\sup_{f\in L^\infty}\frac{\norm{Bf}_1}{\norm{f}_\infty}
    &=\sup\left\{\norm{fB}_1 : \norm{f}_\infty\leq1\right\}\\
    &=\sup\left\{\int_\Omega |Bf|\, d\nu : f\in B_1^{L^\infty}\right\}\\
    &=\sup\left\{\int_\mathbb{R} |x|\, d\mathcal{D}(Bf) : f\in B_1^{L^\infty}\right\}\\
    &=\sup\left\{\int_\mathbb{R} |x|\, d \mu_y(x): \mu\in \mathcal{S}_{1}(B)\right\}.
\end{align*}
This enables us to prove that the $(\infty,1)$-norms are lower semicontinuous with respect to action convergence.
\begin{lemma}
Let $(A_n)_n^\infty$ be a Cauchy sequence of $P$-operators with uniformly bounded $\norm{\cdot}_{\infty\to1}$ norms. Suppose further that $\lim_{n\to\infty}d_M(A_n,A)=0$ for a $P$-operator $A$. Then $\norm{A}_{\infty\to1}\leq\liminf_{n\to\infty}\norm{A_n}_{\infty\to1}$.
\end{lemma}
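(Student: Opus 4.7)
The plan is to combine two ingredients that appear in the excerpt: the closed-form expression
\[
\norm{B}_{\infty\to1}=\sup\left\{\int_\mathbb{R}|x|\,d\mu_y(x):\mu\in\mathcal{S}_1(B)\right\}
\]
for the $(\infty,1)$-norm via $1$-profiles, and the fact that action convergence $d_M(A_n,A)\to0$ forces the Hausdorff distances $d_H(\mathcal{S}_1(A_n),\mathcal{S}_1(A))$ to vanish, so that every $\mu\in\mathcal{S}_1(A)$ is approximable in $d_{LP}$ by a sequence $\mu_n\in\mathcal{S}_1(A_n)$. Since $d_{LP}$ metrises weak convergence of probability measures on $\mathbb{R}^2$ and $x\mapsto|x|$ is unbounded, the integral against $|x|$ is not preserved in the limit for free; this is where the main technical work lies, and it will be dispatched by truncation together with monotone convergence.

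Concretely, I will fix $f\in B_1^{L^\infty(\Omega_A)}$, put $\mu:=\mathcal{D}_A(f)\in\mathcal{S}_1(A)$, and pick $\mu_n=\mathcal{D}_{A_n}(f_n)\in\mathcal{S}_1(A_n)$ with $d_{LP}(\mu_n,\mu)\to0$. Since the projection onto the $y$-axis is continuous, the marginals $(\mu_n)_y$ converge weakly to $\mu_y$ on $\mathbb{R}$. For each $M>0$, the function $x\mapsto\min(|x|,M)$ is bounded and continuous, so
\[
\int_\mathbb{R}\min(|x|,M)\,d(\mu_n)_y(x)\longrightarrow\int_\mathbb{R}\min(|x|,M)\,d\mu_y(x).
\]
The left-hand integral is bounded above by $\int_\mathbb{R}|x|\,d(\mu_n)_y(x)=\norm{A_nf_n}_1\leq\norm{A_n}_{\infty\to1}$, since $\norm{f_n}_\infty\leq1$, so passing to the $\liminf$ in $n$ yields $\int_\mathbb{R}\min(|x|,M)\,d\mu_y(x)\leq\liminf_n\norm{A_n}_{\infty\to1}$.

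Finally, letting $M\to\infty$ and applying the monotone convergence theorem produces $\norm{Af}_1=\int_\mathbb{R}|x|\,d\mu_y(x)\leq\liminf_n\norm{A_n}_{\infty\to1}$, and taking the supremum over $f\in B_1^{L^\infty(\Omega_A)}$ then gives $\norm{A}_{\infty\to1}\leq\liminf_n\norm{A_n}_{\infty\to1}$ via the formula above. The only obstacle is the truncation step handling the unboundedness of $|x|$; the uniform boundedness of $\norm{A_n}_{\infty\to1}$ assumed in the hypothesis is not essential for the argument, since the case $\liminf_n\norm{A_n}_{\infty\to1}=\infty$ is trivial and otherwise one reduces to a realising subsequence before running the above.
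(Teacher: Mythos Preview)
Your proof is correct and follows essentially the same route as the paper's: pick $\mu\in\mathcal{S}_1(A)$, approximate it by $\mu_n\in\mathcal{S}_1(A_n)$ using $d_H(\mathcal{S}_1(A_n),\mathcal{S}_1(A))\to0$, pass to $y$-marginals, and obtain $\int_\mathbb{R}|x|\,d\mu_y\leq\liminf_n\int_\mathbb{R}|x|\,d(\mu_n)_y\leq\liminf_n\norm{A_n}_{\infty\to1}$ before taking the supremum. The only cosmetic difference is that the paper invokes the portmanteau theorem directly (using that $|x|$ is continuous and bounded below), whereas you reprove that particular instance by truncating with $\min(|x|,M)$ and applying monotone convergence; these are equivalent, and your version is arguably more self-contained.
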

\begin{proof}
Suppose that a sequence $(\mu_n)_n^\infty$ of measures on $\mathbb{R}^2$ converges in $d_{LP}$ to a measure $\mu$. Then for every $n$, $d_{LP}\left((\mu_n)_y,\mu_y\right)\leq d_{LP}(\mu_n,\mu)$, and so the measures $(\mu_n)_y$ converge in $d_{LP}$ to $\mu_y$. The Lévy-Prokhorov distance is on $\mathbb{R}$ a metrisation of weak convergence of measures, and so by the portmanteau theorem,\[\int_{\mathbb{R}}|x|\, d\mu_y(x)\leq\liminf_{n\to\infty}\int_\mathbb{R}|x| \, d(\mu_n)_y(x)\]
because the function $|x|$ is continuous and bounded below.
This implies both that the equality
\[\norm{B}_{\infty\to1}=\sup\left\{\int_\mathbb{R} |x|\, d \mu_y(x): \mu\in \mathcal{S}_{1}(B)\right\}\]
from above can be extended to
\[\norm{B}_{\infty\to1}=\sup\left\{\int_\mathbb{R} |x|\, d \mu_y(x): \mu\in \mathcal{S}_{1}(B)\right\}=\sup\left\{\int_\mathbb{R} |x|\, d \mu_y(x): \mu\in \overline{\mathcal{S}_{1}(B)}\right\}\]
and that any measure $\mu\in\mathcal{S}_1(A)$ and any sequence $(\mu_n)_n^\infty$ with $\mu_n\in\mathcal{S}_1(A_n)$ and $\mu_n\xrightarrow{d_{LP}}\mu$ must satisfy
\[\int_\mathbb{R}|x| \, d\mu_y(x)\leq\liminf_{n\to\infty}\int_\mathbb{R}|x| \, d(\mu_n)_y(x)\leq\liminf_{n\to\infty}\norm{A_n}_{\infty\to1},\]
implying
\[\norm{A}_{\infty\to1}=\sup\left\{\int_\mathbb{R} |x|\, d \mu_y(x): \mu\in \mathcal{S}_{1}(B)\right\}\leq\liminf_{n\to\infty}\norm{A_n}_{\infty\to1}.\]
\end{proof}
However, we will see in Section~\ref{section:stars} that the $(\infty,1)$-norm is not continuous.

\section{The case of stars}\label{section:stars}
The question naturally arises whether Theorem~\ref{thm:limit} is as good as it gets, that is whether there are Cauchy sequences with uniformly bounded norms $\norm{\cdot}_{\infty\to q}$ for some $q\in[1,\infty]$ which, however, do not action converge to any $P$-operator.

For positivity-preserving operators $A$,
\[\norm{A}_{\infty\to q}=\norm{A\mathds{1}}_{q},\]
where $\mathds{1}=\chi_{\Omega}$ is the constant function with value 1. At the same time, we have seen that for self-adjoint operators $A$,
\[\norm{A}_{\infty\to q}=\norm{A}_{q'\to1}.\]
Suppose then that we want to look for a candidate Cauchy sequence $(A_n)_n^\infty$ which would exemplify the impossibility of extending Theorem~\ref{thm:limit} beyond $p\neq\infty$ -- if we want to find it among graphops then these two observations tell us that $(\norm{A_n\mathds{1}}_q)_n^\infty$ must be unbounded for every $q\in(1,\infty]$.
Let us note that if $A$ is the adjacency matrix of a graph then $A\mathds{1}=\underline{d}$ is (an ordering of) its degree sequence, and so the sequence $(S_n)_n^\infty$ of stars, having as large a difference between minimum and maximum degree as possible in a simple graph, becomes in immediate candidate to investigate.

In particular, we denote by $S_n$ the $n$-vertex tree with $n-1$ leaves. Then we can check that for $q\in(1,\infty)$, the $q$-norm of its degree sequence is unbounded:

\[
\norm{\underline{d}}_q=\left(\frac{1}{n}\left(1^q+\dots+1^q+(n-1)^q\right)\right)^{\frac{1}{q}}
=\left(\frac{n-1}{n}+\frac{(n-1)^q}{n}\right)^{\frac{1}{q}}\geq\frac{n-1}{n^{\frac{1}{q}}}\to\infty \text{ as } n\to\infty.
\]
By the monotonicities of $(p,q)$-norms, this means that the $(\infty,1)$-norm
\[\norm{S_n}_{\infty\to1}=\frac{1}{n}(1\times(n-1)+(n-1)\times1)<2\]
is the \emph{only} uniformly bounded $(p,q)$-norm of the star sequence, where we took the liberty of identifying $S_n$ with its adjacency matrix.

In the rest of the section, we will investigate what a putative action limit $A$ of $(S_n)_n^\infty$ would have to satisfy, showing that necessarily, the continuity of the $(\infty,1)$-norm cannot hold, i.e.,
\[\lim_{n\to\infty}\norm{S_n}_{\infty\to1}\neq\norm{A}_{\infty\to1},\]
and that $A$ cannot be self-adjoint, so in particular, it cannot be a graphop. We will derive an action limit at the end of the section.

\begin{lemma}\label{lemma:closure}
A $P$-operator
$A$ satisfies $\lim_{n\to\infty} d_M(S_n,A)=0$ if and only if
\[\overline{\mathcal{S}_k(A)}
=\mathcal{P}\left([-1,1]^k\right)\times
\left\{\delta_z : z\in[-1,1]^k\right\}\]
for every positive integer $k$.
\end{lemma}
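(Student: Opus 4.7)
The plan is to compute $\mathcal{S}_k(S_n)$ explicitly, show that $\mathcal{S}_k(S_n)$ converges in the Hausdorff distance to $Z_k := \{\nu \otimes \delta_z : \nu \in \mathcal{P}([-1,1]^k),\, z\in[-1,1]^k\}$, and then conclude both directions of the lemma from the facts that $d_H(X,Y)=0$ precisely when $\overline{X}=\overline{Y}$ and that $d_H$ satisfies the triangle inequality. One preliminary observation I would record is that $Z_k$ is closed in $d_{LP}$: given a convergent sequence $\nu_m\otimes\delta_{z_m}\to\mu$, compactness of $[-1,1]^k$ and of $\mathcal{P}([-1,1]^k)$ lets us pass to a subsequence along which $z_m\to z$ and $\nu_m\to\nu$, so $\mu=\nu\otimes\delta_z\in Z_k$ by joint weak continuity of the product.

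Computing the profile is direct: labelling the centre of $S_n$ by $1$ and the leaves by $2,\dots,n$, the adjacency operator sends $f_j$ to $\sum_{i=2}^n f_j(i)$ at the centre and to $f_j(1)$ at every leaf, so for any $f_1,\dots,f_k \in B_1^{L^\infty}$,
\[
\mathcal{D}_{S_n}(f_1,\dots,f_k) = \tfrac{1}{n}\,\delta_x + \tfrac{n-1}{n}\,(\nu_n\otimes\delta_z),
\]
where $z=(f_1(1),\dots,f_k(1))$, $\nu_n=\tfrac{1}{n-1}\sum_{i=2}^{n}\delta_{(f_1(i),\dots,f_k(i))}$ is the empirical distribution on the leaves, and $x\in\mathbb{R}^{2k}$ records the centre's contribution. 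A total-variation bound then places every element of $\mathcal{S}_k(S_n)$ within $d_{LP}$-distance $2/n$ of the element $\nu_n\otimes\delta_z\in Z_k$.

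The main obstacle — and the only substantive step — is the converse approximation: approximating every $\nu\otimes\delta_z\in Z_k$ by a profile measure of $S_n$ with an error that is uniform in $(\nu,z)$. I would partition $[-1,1]^k$ into $J=O(\delta^{-k})$ cells of diameter at most $\delta$, set $f_j(1)=z_j$, and distribute the leaf values over cell centres according to the quantised cell masses $\lfloor(n-1)\nu(C_j)\rfloor$, placing any remaining $\le J$ leaves arbitrarily. A short $d_{LP}$ estimate then gives $d_{LP}(\nu_n,\nu)\le\delta+J/(n-1)$; choosing $\delta=n^{-1/(k+1)}$ yields a bound $\epsilon_k(n)=O(n^{-1/(k+1)})\to 0$ which does not depend on $(\nu,z)$. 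This uniformity is the crux, because it is what lifts pointwise density into Hausdorff convergence $d_H(\mathcal{S}_k(S_n),Z_k)\to 0$ for every $k$.

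Both directions of the lemma then follow quickly. If $d_M(S_n,A)\to 0$, then $d_H(\mathcal{S}_k(A),\mathcal{S}_k(S_n))\to 0$ for each $k$, so the triangle inequality combined with the previous step yields $d_H(\mathcal{S}_k(A),Z_k)=0$, hence $\overline{\mathcal{S}_k(A)}=\overline{Z_k}=Z_k$. Conversely, if $\overline{\mathcal{S}_k(A)}=Z_k$ for every $k$, then $d_H(\mathcal{S}_k(A),\mathcal{S}_k(S_n))=d_H(Z_k,\mathcal{S}_k(S_n))\to 0$, and since every summand in the weighted sum defining $d_M$ is bounded by $1$, dominated convergence yields $d_M(S_n,A)\to 0$.
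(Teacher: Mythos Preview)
Your proposal is correct and follows essentially the same route as the paper: compute $\mathcal{S}_k(S_n)$, show it Hausdorff-converges to $Z_k$ via a uniform discretisation/quantisation argument, and finish with the triangle inequality in both directions. The only cosmetic differences are that the paper packages the uniform approximation step as a separate general lemma (uniform approximability of $\mathcal{P}(M)$ for totally bounded $M$) rather than giving your explicit rate $O(n^{-1/(k+1)})$, and that you verify closedness of $Z_k$ explicitly while the paper leaves this implicit.
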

We shall in fact deduce this lemma from the following more general result.

\begin{lemma}[Uniform approximability of $\mathcal{P}(M)$]\label{lemma:uniformapprox}
Let $(M,d)$ be a totally bounded metric space.
Then for all $\varepsilon>0$, there is $N=N(\varepsilon)$ such that for every $\mu\in\mathcal{P}(M,\mathcal{B}(M))$, there is a sequence $\big(\mu_n=\frac{1}{n}\sum_{i=1}^n\delta_{x_i}\big)_n^\infty$ of discrete probability measures such that
\[\forall n\geq N(\varepsilon)\ \ d_{LP}(\mu,\mu_n)<\varepsilon.\]
\end{lemma}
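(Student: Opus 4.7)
The plan is to use total boundedness to reduce an arbitrary $\mu\in\mathcal{P}(M)$ to a discrete measure supported on a fixed finite set, and then to approximate that discrete measure by uniform empirical measures on $n$ points via integer rounding of the weights. The threshold $N(\varepsilon)$ will be governed by how finely we need to partition $M$ and by how accurately we can round $K$ real weights summing to $1$ to fractions with denominator $n$.

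Concretely, fix $\varepsilon>0$. Choose a finite $\varepsilon/4$-net $\{y_1,\dots,y_K\}$ in $M$ (here $K$ depends only on $\varepsilon$, by total boundedness), and let $B_j=\{x\in M:j\text{ is the least index with }d(x,y_j)<\varepsilon/4\}$. Then $B_1,\dots,B_K$ is a Borel partition of $M$ with $\mathrm{diam}(B_j)<\varepsilon/2$ and $y_j\in B_j$. Given any $\mu\in\mathcal{P}(M)$, set $p_j=\mu(B_j)$, and for each $n$ pick nonnegative integers $m_{n,1},\dots,m_{n,K}$ with $\sum_j m_{n,j}=n$ and $|m_{n,j}-np_j|<1$ for all $j$ (take $m_{n,j}=\lfloor np_j\rfloor$ and distribute the deficit $n-\sum_j\lfloor np_j\rfloor<K$ one unit at a time). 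Define
\[
\mu_n=\frac{1}{n}\sum_{j=1}^K m_{n,j}\,\delta_{y_j},
\]
which, upon listing each $y_j$ with its multiplicity $m_{n,j}$, is a uniform empirical measure on $n$ points.

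For the LP bound, let $U\subset M$ be Borel and set $J(U)=\{j:B_j\cap U\neq\emptyset\}$. Since $\mathrm{diam}(B_j)<\varepsilon/2$, if $j\in J(U)$ then $y_j\in U^{\varepsilon/2}$, and if $y_j\in U$ then $B_j\subset U^{\varepsilon/2}$. Using $p_j-\tfrac1n<\tfrac{m_{n,j}}{n}<p_j+\tfrac1n$, this yields
\[
\mu(U)\leq\sum_{j\in J(U)}p_j\leq\mu_n(U^{\varepsilon/2})+\frac{K}{n},\qquad
\mu_n(U)\leq\sum_{j:y_j\in U}\frac{m_{n,j}}{n}\leq\mu(U^{\varepsilon/2})+\frac{K}{n}.
\]
Hence $d_{LP}(\mu,\mu_n)\leq\max(\varepsilon/2,K/n)$, so taking $N(\varepsilon)=\lceil 2K/\varepsilon\rceil$ gives $d_{LP}(\mu,\mu_n)\leq\varepsilon/2<\varepsilon$ for all $n\geq N(\varepsilon)$, uniformly in $\mu$.

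The substantive point is that $N$ depends on $\mu$ only through $K$, which is controlled purely by total boundedness; the minor obstacles are producing a Borel partition of small diameter (handled by the Voronoi-type rule above) and arranging the integer rounding to exactly sum to $n$, both standard. From this lemma, Lemma~\ref{lemma:closure} will follow by letting $k$ vary: any measure in $\mathcal{P}([-1,1]^k)\times\{\delta_z:z\in[-1,1]^k\}$ can be approximated in $d_{LP}$ by empirical measures of the form realised by $\mathcal{S}_k(S_n)$ (where each realised point is $(v_j,(S_nv)_j)$ with $v\in[-1,1]^n$), with the required $n$ controlled uniformly in the target measure.
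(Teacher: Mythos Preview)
Your proof is correct and follows essentially the same route as the paper: choose a finite $\varepsilon$-net, push $\mu$ onto the associated Borel partition, and round the resulting weights to multiples of $1/n$. The only cosmetic difference is that the paper splits the estimate into two steps ($\mu\to\kappa_m$ and $\kappa_m\to\mu_n$) and applies the triangle inequality, whereas you bound $d_{LP}(\mu,\mu_n)$ directly; also, your assertion that $y_j\in B_j$ need not hold under the least-index rule, but your argument never actually uses it, relying only on $B_j\subseteq B(y_j,\varepsilon/4)$.
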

In particular, $N$ does not depend on $\mu$.
\begin{proof}
For a positive integer $k$, let us fix a finite set $\{z_1,\dots,z_{m(k)}\}\subseteq M$ whose $\frac{1}{k}$-balls cover $M$ and a partition $M_1,\dots,M_{m(k)}$ in $\mathcal{B}(M)$ of $M$ such that $z_i\in M_i\subseteq B_{1/k}(z_i)$ for every $i\in[m]=[m(k)]$. Given a probability measure $\mu$ on $M$ and a partition $M_1,\dots,M_m$ as described, let $\kappa_m\in\mathcal{P}(M)$ be $\kappa_m=\sum_i^m\mu(M_i)\delta_{z_i}$. Then for every measurable $A$,
\[
\mu(A)=\sum_{i=1}^m\mu(A\cap M_i)
=\sum_{\substack{i\in[m] \\ A\cap M_i\neq\emptyset}}\mu(A\cap M_i)\leq\sum_{\substack{i\in[m] \\ A\cap M_i\neq\emptyset}}\mu(M_i)
=\sum_{\substack{i\in[m] \\ A\cap M_i\neq\emptyset}}\kappa_m(\{z_i\})\leq\kappa_m\left(A^{1/k}\right)
\]
and
\[
\kappa_m(A)=\sum_{\substack{i\in[m] \\ z_i\in A}}\kappa_m(\{z_i\})=\sum_{\substack{i\in[m] \\ z_i\in A}}\mu(M_i)\leq\mu(A^{1/k}),
\]
so $d_{LP}\left(\mu,\kappa_m\right)\leq\frac{1}{k}$.

The next step is to approximate the discrete measure $\kappa_m$ by $\mu_n=\frac{1}{n}\sum_i^n\delta_{x_i}$ where for every $i\in[n]$ we will put $x_i=z_j$ for some $j\in[m]$. Let $\mu_n$ be any probability measure on $\{z_1,\dots,z_m\}$ such that $\mu_n\left(\{z_j\}\right)=\left \lfloor{n\kappa_m(z_j)}\right \rfloor/n$ or $\left \lceil{n\kappa_m(z_j)}\right \rceil/n$ for every $j\in[m]$. Then $|\kappa_m(z_j)-\mu_n(z_j)|<\frac{1}{n}$ for all $j\in[m]$, and so $d_{LP}(\kappa_m,\mu_n)<\frac{m}{n}$. Finally, the triangle inequality implies
\[
d_{LP}(\mu,\mu_n)\leq d_{LP}(\mu,\kappa_m)+d_{LP}\left(\kappa_m,\mu_n\right)<\frac{1}{k}+\frac{m}{n},
\]
 and hence for every $\mu\in\mathcal{P}(M)$ and $n\geq m(k)\cdot k$, we have that $d_{LP}(\mu,\mu_n)<2/k$ as desired.
\end{proof}

\begin{proof}[Proof of Lemma~\ref{lemma:closure}]
Let us first note that $\lim_{n\to\infty} d_M(S_n,A)=0$ if and only if
\[\lim_{n\to\infty} d_H\left(\mathcal{S}_k(S_n),\mathcal{S}_k(A)\right)=0\] for every positive integer $k$, so let us fix $k$ for the rest of the proof and show that $\lim_{n\to\infty} d_H\left(\mathcal{S}_k(S_n),\mathcal{S}_k(A)\right)=0$ if and only if
\[\overline{\mathcal{S}_k(A)}
=\mathcal{P}\left([-1,1]^k\right)\times
\left\{\delta_z : z\in[-1,1]^k\right\}.\]

Lemma~\ref{lemma:uniformapprox} implies that for every
$\varepsilon>0$, there is an
$N=N(\varepsilon)$ such that $\forall n\geq N$,
\[
d_H\left(
\Big\{\frac{1}{n}\sum_{i=1}^n\delta_{x_i} : x_i\in[-1,1]^k\Big\}\times\big\{\delta_z : z\in[-1,1]^k\big\},
\mathcal{P}\left([-1,1]^k\right)\times\big\{\delta_z : z\in[-1,1]^k\big\}
\right)
\leq\varepsilon.
\]
On the other hand, $S_n$ sends the vector $(a_1,a_2,\dots,a_n)\in[-1,1]^n$ to $(\sum_{j=2}^na_j,a_1,\dots,a_1)$, which means that every choice of vectors $v_1,\dots,v_k$ with entries in $[-1,1]$ gives an element
$\frac{1}{n}\big(
\delta_{((v_1)_1,\dots,(v_k)_1,\sum_{j=2}^n(v_1)_j,\dots,\sum_{j=2}^n(v_k)_j)}+\sum_{j=2}^n\delta_{((v_1)_j,\dots,(v_k)_j,(v_1)_1,\dots,(v_k)_1)}\big)$ of $\mathcal{S}_k(S_n)$ which satisfies
\begin{align*}
d_{LP}\bigg(
\frac{1}{n}\Big(
\delta_{((v_1)_1,\dots,(v_k)_1,\sum_{j=2}^n(v_1)_j,\dots,\sum_{j=2}^n(v_k)_j)}+&\sum_{j=2}^n\delta_{((v_1)_j,\dots,(v_k)_j,(v_1)_1,\dots,(v_k)_1)}\Big),\\
\frac{1}{n}\Big(\delta_x+&\sum_{j=2}^n\delta_{((v_1)_j,\dots,(v_k)_j)}\Big)\times\delta_{((v_1)_1,\dots,(v_k)_1)}
\bigg)\leq\frac{1}{n}
\end{align*}
for every $x\in[-1,1]^k$.
This means that
\[d_H\left(\mathcal{S}_k(S_n),\left\{\frac{1}{n}\sum_{i=1}^n\delta_{x_i} : x_i\in[-1,1]^k\right\}\times\left\{\delta_z : z\in[-1,1]^k\right\}\right)\leq\frac{1}{n},\]
and so the triangle inequality tells us that $\forall n\geq N$,
\begin{align*}
&&&&d_H\big(
\mathcal{S}_k(S_n),&\mathcal{P}\left([-1,1]^k\right)\times\big\{\delta_z : z\in[-1,1]^k\big\}
\big)\\
&&\leq &&
d_H\Big(\mathcal{S}_k(S_n),&\Big\{\frac{1}{n}\sum_{i=1}^n\delta_{x_i} : x_i\in[-1,1]^k\Big\}\times\big\{\delta_z : z\in[-1,1]^k\big\}\Big)\\
&&&&+\ d_H\Big(&\Big\{\frac{1}{n}\sum_{i=1}^n\delta_{x_i} : x_i\in[-1,1]^k\Big\}\times\left\{\delta_z : z\in[-1,1]^k\right\},\mathcal{P}\left([-1,1]^k\right)\times\left\{\delta_z : z\in[-1,1]^k\right\}\Big)\\
&&\leq & & \varepsilon\ +\ \frac{1}{n}.&
\end{align*}

Now if
$\overline{\mathcal{S}_k(A)}=\mathcal{P}\left([-1,1]^k\right)\times\{\delta_z : z\in[-1,1]^k\}$
then $\forall n\geq N$,
\begin{align*}
& d_H\big(\mathcal{S}_k(S_n),\mathcal{S}_k(A)\big)\\
\leq\ & d_H\left(\mathcal{S}_k(S_n),\mathcal{P}\left([-1,1]^k\right)\times\{\delta_z : z\in[-1,1]^k\}\right)+d_H\left(\mathcal{P}\left([-1,1]^k\right)\times\{\delta_z : z\in[-1,1]^k\},\mathcal{S}_k(A)\right)\\
=\ & d_H\left(\mathcal{S}_k(S_n),\mathcal{P}\left([-1,1]^k\right)\times\{\delta_z : z\in[-1,1]^k\}\right)\\
\leq\ & \varepsilon+\frac{1}{n},
\end{align*}
and so $\lim_{n\to\infty} d_H(\mathcal{S}_k(S_n),\mathcal{S}_k(A))=0$.

On the other hand, if $\lim_n d_H(\mathcal{S}_k(S_n),\mathcal{S}_k(A))=0$ and $\varepsilon>0$ then there is $M=M(\varepsilon)$ such that $\forall n\geq M$ $d_H(\mathcal{S}_k(S_n),\mathcal{S}_k(A))<\varepsilon$.
Combining this with the computations above, we get that $\forall n\geq\max\{N(\varepsilon),M(\varepsilon)\}$,
\begin{align*}
&d_H\big(\mathcal{S}_k(A),\mathcal{P}([-1,1]^k)\times\{\delta_z : z\in[-1,1]^k\}\big)\\
\leq\ & d_H\left(\mathcal{S}_k(A),\mathcal{S}_k(S_n)\right)+d_H\big(\mathcal{S}_k(S_n),\mathcal{P}([-1,1]^k)\times\{\delta_z : z\in[-1,1]^k\}\big)\\
<\ &2\varepsilon+\frac{1}{n},
\end{align*}
and so in fact
$d_H\left(\mathcal{S}_k(A),\mathcal{P}([-1,1]^k)\times\{\delta_z : z\in[-1,1]^k\}\right)=0$. As $\mathcal{P}\left([-1,1]^k\right)\times\left\{\delta_z : z\in[-1,1]^k\right\}$ is closed, this is equivalent to $\overline{\mathcal{S}_k(A)}=\mathcal{P}\left([-1,1]^k\right)\times\left\{\delta_z : z\in[-1,1]^k\right\}$ as desired.
\end{proof}

Let us now assume that there is a $P$-operator $A\colon L^\infty(\Omega,\mu)\to L^1(\Omega,\mu)$ for some probability space $(\Omega,\mathcal{A},\mu)$ such that $\lim_{n\to\infty}d_M(S_n,A)=0$.

Every $f\in B_1^{L^\infty}$ gives an element $\mathcal{D}(f,Af)$ of $\mathcal{S}_1(A)$ which, by Lemma~\ref{lemma:uniformapprox}, is of the form $\nu\times\delta_{c_f}$ for some constant $c_f\in[-1,1]$. That is, $A$ is in fact an element of $L^\infty(\Omega)^*$ and sends $f$ to $c_f$.

\begin{lemma}\label{cislessthanfinfty}
For every $f\in B_1^{L^\infty}$, it must be the case that $|c_f|\leq\norm{f}_\infty$.
\end{lemma}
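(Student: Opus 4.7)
The plan is to exploit the linearity of $A$ together with the fact that any $g$ in the closed unit ball of $L^\infty$ has already been shown (in the paragraph preceding the lemma, via Lemma~\ref{lemma:closure}) to satisfy $Ag = c_g\mathds{1}$ a.e.\ with $c_g\in[-1,1]$. Since this gives the bound $|c_g|\leq 1 = \|g\|_\infty$ for \emph{unit-norm} test functions, the only thing one needs for a general $f\in B_1^{L^\infty}$ is to rescale down to the unit sphere and transfer the bound via linearity.

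Concretely, I would first dispose of the trivial case $\|f\|_\infty=0$, where $f=0$ a.e.\ forces $Af=0$ a.e.\ (since $A$ is a linear operator sending the zero vector to the zero vector), so $c_f=0$ and the inequality holds with equality. Then, for $f\in B_1^{L^\infty}$ with $\alpha:=\|f\|_\infty\in(0,1]$, set $g:=f/\alpha$, which lies on the unit sphere of $L^\infty$ and thus in $B_1^{L^\infty}$. By the discussion immediately before the lemma, $Ag=c_g\mathds{1}$ almost everywhere for some $c_g\in[-1,1]$.

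Now I would invoke linearity of $A$: $Af = A(\alpha g) = \alpha\,Ag = \alpha c_g \mathds{1}$ a.e. Because the joint distribution $\mathcal{D}(f,Af)$ is of the form $\nu\times\delta_{c_f}$, the constant $c_f$ is uniquely characterised as the (a.e.) value of $Af$, and so $c_f=\alpha c_g$. Taking absolute values gives $|c_f|=\alpha|c_g|\leq\alpha\cdot 1=\|f\|_\infty$, as required.

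There is no real obstacle here — the proof is a one-line scaling argument once one has observed that $A$ is a linear operator and that the Dirac-type form of $\mathcal{D}(f,Af)$ makes $c_f$ well-defined and linear in $f$. The only point to double-check is that the rescaled function $\alpha g$ and its image $\alpha c_g\mathds{1}$ genuinely give a measure in $\mathcal{S}_1(A)$ of product form, which is immediate because multiplying both coordinates of the pushforward by $\alpha$ preserves the product structure $\nu\times\delta_{c}$.
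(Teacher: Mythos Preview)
Your proposal is correct and follows essentially the same approach as the paper: normalise $f$ to the unit sphere via $g=f/\norm{f}_\infty$, use that $|c_g|\leq 1$ because $g\in B_1^{L^\infty}$, and scale back by linearity to obtain $|c_f|=\norm{f}_\infty|c_g|\leq\norm{f}_\infty$. Your version is in fact slightly more careful in that you explicitly dispose of the trivial case $\norm{f}_\infty=0$, which the paper's proof implicitly skips.
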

\begin{proof}
\begin{align*}
\norm{\frac{f}{\norm{f}_\infty}}_\infty=1
\Rightarrow\ &\frac{f}{\norm{f}_\infty}\in B_1^{L^\infty}\\
\Rightarrow\ &\left\lvert A\frac{f}{\norm{f}_\infty}\right\rvert\equiv\lvert c_{\frac{f}{\norm{f}}}\rvert\leq1\\
\Rightarrow\ &\norm{f}_\infty\left\lvert A\frac{f}{\norm{f}_\infty}\right\rvert\leq\norm{f}_\infty\\
\Rightarrow\ &|c_f|\equiv\left\lvert Af\right\rvert\leq\norm{f}_\infty\\
\end{align*}
\end{proof}

Next we note that for every $x\in[-1,1]\setminus\{0\}$, the Dirac measure $\delta_{(0,x)}$ is in $\overline{\mathcal{S}_1(A)}$, while the lemma above says it cannot be in the profile $\mathcal{S}_1(A)$ itself. Thus for every $x\in[-1,1]\setminus\{0\}$ and for every $\epsilon>0$ there must be $f_{x,\epsilon}\in B_1^{L^\infty}$ such that $d_{LP}\big(\delta_{(0,x)},\mathcal{D}_A(f_{x,\epsilon})\big)<\epsilon$. Now
\begin{align}
&d_{LP}\big(\delta_{(0,x)},\mathcal{D}_A(f_{x,\epsilon})\big)<\epsilon \nonumber \\
\Leftrightarrow\ 
&\inf\{\delta>0 :
\delta_{(0,x)}(U)\leq\mathcal{D}_A(f_{x,\epsilon})(U^\delta)+\delta
\text{ and }
\mathcal{D}_A(f_{x,\epsilon})(U)\leq\delta_{(0,x)}(U^\delta)+\delta
\text{ for every Borel set } U\subset\mathbb{R}^2
\}<\epsilon \nonumber \\
\Leftrightarrow\ 
&\delta_{(0,x)}(U)\leq\mathcal{D}_A(f_{x,\epsilon})(U^\epsilon)+\epsilon
\text{ and }
\mathcal{D}_A(f_{x,\epsilon})(U)\leq\delta_{(0,x)}(U^\epsilon)+\epsilon
\text{ for every Borel set } U\subset\mathbb{R}^2 \nonumber \\
\Leftrightarrow\ 
&1\leq\mathcal{D}_A(f_{x,\epsilon})\big(B_\epsilon((0,x))\big)+\epsilon \nonumber \\
\Rightarrow\ 
&c_{f_{x,\epsilon}}\in(x-\epsilon,x+\epsilon)\cap[-1,1].\label{eqn:c_f}
\end{align}
Also let $\Omega_{<\epsilon}^x$ be the subset $\{\omega : |f_{x,\epsilon}(\omega)|<\epsilon\}$ of $\Omega$, and similarly let $\Omega_{\geq\epsilon}^x=\{\omega : |f_{x,\epsilon}(\omega)|\geq\epsilon\}\subset\Omega$.
Then apart from the line (\ref{eqn:c_f}) above, $1\leq\mathcal{D}_A(f_{x,\epsilon})\big(B_\epsilon((0,x))\big)+\epsilon$ also implies that $\mu\left(\Omega_{<\epsilon}^x\right)\geq1-\epsilon$ and $\mu\left(\Omega_{\geq\epsilon}^x\right)\leq\epsilon$.

\begin{corollary}\label{coroll:norm=1}
If a $P$-operator $A$ is an action limit of $(S_n)_n^\infty$ then $\norm{A}_{\infty\to1}=1$.
\end{corollary}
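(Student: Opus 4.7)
The plan is to prove the two inequalities $\|A\|_{\infty\to 1}\le 1$ and $\|A\|_{\infty\to 1}\ge 1$ using the structural information about $A$ collected just above the corollary, rather than appealing directly to the $(\infty,1)$-norm of the $S_n$'s (which would only give a bound of $2$).

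For the upper bound, I exploit the fact that $\mathcal{S}_1(A)\subseteq\overline{\mathcal{S}_1(A)}=\mathcal{P}([-1,1])\times\{\delta_z:z\in[-1,1]\}$ from Lemma~\ref{lemma:closure}. This forces every observation $\mathcal{D}(f,Af)$ with $f\in B_1^{L^\infty}$ to be a product measure $\nu_f\times\delta_{c_f}$, which means that $Af\equiv c_f$ holds $\mu$-almost everywhere. Combined with Lemma~\ref{cislessthanfinfty}, this gives $\|Af\|_1=|c_f|\le\|f\|_\infty\le 1$ for every $f\in B_1^{L^\infty}$, hence $\|A\|_{\infty\to 1}\le 1$.

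For the lower bound, I use the family of test functions $f_{x,\epsilon}$ constructed immediately before the corollary statement, specialised to $x=1$. The defining property, together with line (\ref{eqn:c_f}), ensures $c_{f_{1,\epsilon}}\in(1-\epsilon,1+\epsilon)\cap[-1,1]$, so $c_{f_{1,\epsilon}}>1-\epsilon$, while $\|f_{1,\epsilon}\|_\infty\le 1$. Therefore
\[
\|A\|_{\infty\to 1}\ge\frac{\|Af_{1,\epsilon}\|_1}{\|f_{1,\epsilon}\|_\infty}\ge|c_{f_{1,\epsilon}}|>1-\epsilon
\]
for every $\epsilon>0$, giving $\|A\|_{\infty\to 1}\ge 1$. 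The two bounds together yield equality. There is no real obstacle here beyond carefully invoking Lemma~\ref{lemma:closure} to conclude that the second marginal of $\mathcal{D}(f,Af)$ is a Dirac (so that $\|Af\|_1$ reduces to $|c_f|$); the main conceptual point is simply that the action limit collapses every input function's image to a constant, whereas the pre-limit stars have a high-degree vertex producing $\|S_n\|_{\infty\to 1}\to 2$, demonstrating the announced failure of continuity of the $(\infty,1)$-norm.
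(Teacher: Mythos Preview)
Your proof is correct and follows essentially the same approach as the paper: both the upper bound via $\|Af\|_1=|c_f|\le\|f\|_\infty$ (using Lemma~\ref{cislessthanfinfty}) and the lower bound via the test functions $f_{1,\varepsilon}$ with $|c_{f_{1,\varepsilon}}|>1-\varepsilon$ are exactly what the paper does.
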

\begin{proof}
\[
\norm{A}_{\infty\to1}
=\sup_{f\in B_1^{L^\infty}}\frac{\norm{Af}_1}{\norm{f}_\infty}
=\sup_{f\in B_1^{L^\infty}}\frac{\lvert c_f\rvert}{\norm{f}_\infty}
\leq\sup_{f\in B_1^{L^\infty}}\frac{\norm{f}_\infty}{\norm{f}_\infty}
=1.
\]
On the other hand, for every $\varepsilon>0$
\[
\norm{A}_{\infty\to1}
\geq\frac{\norm{Af_{1,\varepsilon}}_1}{\norm{f_{1,\varepsilon}}_\infty}
=\frac{\lvert c_{f_{1,\varepsilon}}\rvert}{\norm{f_{1,\varepsilon}}_\infty}
>\frac{1-\varepsilon}{\norm{f_{1,\varepsilon}}_\infty}
\geq1-\varepsilon,
\]
so $\norm{A}_{\infty\to1}\geq1$, completing the proof.
\end{proof}

Since we will later construct an action limit of $(S_n)_n^\infty$, Corollary~\ref{coroll:norm=1} shows that, as claimed at the beginning of the section, the $(\infty,1)$-norm is not continuous with respect to action convergence.
In particular, if $\lim_n d_M(S_n,A)=0$ then
\[\lim_{n\to\infty}\norm{S_n}_{\infty\to1}=\lim_{n\to\infty}\frac{2n-2}{n}=2\neq1=\norm{A}_{\infty\to1}.\]

Using the notation set up before Corollary~\ref{coroll:norm=1}, we will now deliver on our second promise of proving that $A$ cannot possibly be self-adjoint.
\begin{proposition}\label{prop:not_selfadjoint}
    Suppose that a $P$-operator $A$ is an action limit of $(S_n)_n^\infty$. Then $A$ is not self-adjoint.
\end{proposition}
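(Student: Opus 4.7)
The plan is to extract a contradiction from the almost-constancy of $A$ established via Lemma~\ref{lemma:closure}. For any $f \in B_1^{L^\infty}$, the inclusion $\mathcal{S}_1(A) \subseteq \overline{\mathcal{S}_1(A)} = \mathcal{P}([-1,1]) \times \{\delta_z : z \in [-1,1]\}$ forces $\mathcal{D}_A(f)$ to factor as $\nu_f \times \delta_{c_f}$, so $Af$ equals the constant $c_f$ $\mu$-almost everywhere. By linearity of $A$ this extends to all $f \in L^\infty$, with $c_{\alpha f + \beta g} = \alpha c_f + \beta c_g$. Consequently
\[
(f,g)_A = \int (Af)\,g\, d\mu = c_f\, \mathbb{E}[g], \qquad (g,f)_A = c_g\, \mathbb{E}[f],
\]
and self-adjointness of $A$ is equivalent to the identity $c_f \mathbb{E}[g] = c_g \mathbb{E}[f]$ for all $f, g \in L^\infty$.

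To contradict this, I aim to exhibit a single $h \in L^\infty$ with $\mathbb{E}[h] = 0$ but $c_h \neq 0$; setting $g = \mathds{1}$ in the identity would then give $c_h = c_\mathds{1}\, \mathbb{E}[h] = 0$, the desired contradiction. The candidate $h$ is obtained by centering the functions $f_{1,\epsilon}$ constructed just before Corollary~\ref{coroll:norm=1}. Recall they lie in $B_1^{L^\infty}$ and satisfy $c_{f_{1,\epsilon}} \in (1-\epsilon, 1]$, while $|f_{1,\epsilon}| < \epsilon$ on the set $\Omega_{<\epsilon}^{1}$ of measure at least $1-\epsilon$. Hence
\[
|\mathbb{E}[f_{1,\epsilon}]| \leq \int_{\Omega_{<\epsilon}^{1}} |f_{1,\epsilon}|\, d\mu + \int_{\Omega_{\geq\epsilon}^{1}} |f_{1,\epsilon}|\, d\mu \leq \epsilon(1-\epsilon) + \epsilon \leq 2\epsilon.
\]

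Set $h_\epsilon := f_{1,\epsilon} - \mathbb{E}[f_{1,\epsilon}]\, \mathds{1}$. Linearity of $\mathbb{E}$ gives $\mathbb{E}[h_\epsilon] = 0$, while linearity of $A$ together with the bound $|c_\mathds{1}| \leq 1$ from Lemma~\ref{cislessthanfinfty} yields
\[
c_{h_\epsilon} = c_{f_{1,\epsilon}} - \mathbb{E}[f_{1,\epsilon}]\, c_\mathds{1} \geq (1-\epsilon) - 2\epsilon = 1 - 3\epsilon > 0
\]
for every $\epsilon < 1/3$. Any such $h_\epsilon$ then furnishes the contradiction and shows $A$ cannot be self-adjoint. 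The only delicate point is the $O(\epsilon)$ bound on $|\mathbb{E}[f_{1,\epsilon}]|$, which guarantees that the centering does not swamp the almost-extremal value $c_{f_{1,\epsilon}} \approx 1$; everything else follows directly from the structure theorem for $\mathcal{S}_1(A)$ in Lemma~\ref{lemma:closure}.
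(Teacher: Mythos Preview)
Your proof is correct and follows essentially the same approach as the paper: both hinge on the pair $\mathds{1}$ and $f_{1,\varepsilon}$, using the estimates $|\mathbb{E}[f_{1,\varepsilon}]|\leq 2\varepsilon$, $|c_\mathds{1}|\leq 1$, and $c_{f_{1,\varepsilon}}>1-\varepsilon$ to produce an asymmetry in the bilinear form for $\varepsilon<1/3$. Your extra step of centering $f_{1,\varepsilon}$ to obtain $h_\varepsilon$ is a harmless repackaging---the paper simply compares $(\mathds{1},f_{1,\varepsilon})_A$ and $(f_{1,\varepsilon},\mathds{1})_A$ directly, which avoids the need to construct $h_\varepsilon$ at all.
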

\begin{proof}
    Let $\varepsilon\in(0,1/3)$ and $\mathds{1}$ be the characteristic function $\chi_\Omega$. Then
    \[
    \left|(\mathds{1},f_{1,\varepsilon})_A\right|
    =\left|\int(A\mathds{1})f_{1,\varepsilon} \,d\mu\right|
    =\left|c_\mathds{1}\int f_{1,\varepsilon} \,d\mu\right|
    \leq\left| \int_{\Omega^1_{<\varepsilon}} f_{1,\varepsilon} \,d\mu+\int_{\Omega^1_{\geq\varepsilon}} f_{1,\varepsilon} \,d\mu \right|
    \leq\varepsilon\mu\left(\Omega^1_{<\varepsilon}\right)+\mu\left(\Omega^1_{\geq\varepsilon}\right)
    \leq2\varepsilon
    \]
    while
    \[\left|(f_{1,\varepsilon},\mathds{1})_A\right|
    =\left|\int(Af_{1,\varepsilon})\mathds{1} \,d\mu\right|
    =|c_{f_{1,\varepsilon}}|>1-\varepsilon.\]
    But then
    \[\left|(f_{1,\varepsilon},\mathds{1})_A\right|>1-\varepsilon>2\varepsilon\geq\left|(\mathds{1},f_{1,\varepsilon})_A\right|,\]
    and so $(f_{1,\varepsilon},\mathds{1})_A\neq(\mathds{1},f_{1,\varepsilon})_A$ and $A$ is not self-adjoint.
\end{proof}

Again, since we will actually show the existence of an action limit $A$ at the end of the section, Proposition~\ref{prop:not_selfadjoint} demonstrates a limitation of Proposition~\ref{prop:graphops_remain_graphops} (a) and together with Corollary~\ref{coroll:selfadjoint_just_infty1} effectively answers the question of necessary self-adjointness of a limit of a sequence of self-adjoint $P$-operators.

\begin{proposition}\label{notinL1}
There is no $g\in L^1(\Omega,\mu)$ such that $Af\equiv\int_\Omega gf \text{d}\mu$ for every $f\in L^\infty(\Omega,\mu)$.
\end{proposition}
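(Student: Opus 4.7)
The plan is to assume for contradiction that such $g\in L^1(\Omega,\mu)$ exists and to test the resulting representation against the approximators $f_{1,\varepsilon}$ built just before Corollary~\ref{coroll:norm=1}. Recall that the 1-profile description forces $Af$ to be a.e.\ equal to the constant $c_f$ for every $f\in B_1^{L^\infty}$, so the assumed identity $Af\equiv\int_\Omega gf\,d\mu$ collapses to the scalar identity
\[
\int_\Omega gf\,d\mu=c_f\qquad\text{for every }f\in B_1^{L^\infty}.
\]

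Next I would substitute $f=f_{1,\varepsilon}$. By construction $\norm{f_{1,\varepsilon}}_\infty\leq 1$, $|f_{1,\varepsilon}|<\varepsilon$ on $\Omega^1_{<\varepsilon}$, $\mu(\Omega^1_{\geq\varepsilon})\leq\varepsilon$, and $c_{f_{1,\varepsilon}}>1-\varepsilon$. Splitting the integral across these two sets yields
\[
1-\varepsilon<\left|\int_\Omega gf_{1,\varepsilon}\,d\mu\right|\leq\varepsilon\norm{g}_1+\int_{\Omega^1_{\geq\varepsilon}}|g|\,d\mu.
\]
The first summand tends to $0$ with $\varepsilon$. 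To handle the second, I would invoke the absolute continuity of the Lebesgue integral: since $g\in L^1$, for every $\eta>0$ there is $\delta>0$ such that $\mu(E)<\delta$ implies $\int_E|g|\,d\mu<\eta$. Choosing $\varepsilon<\delta$ forces $\int_{\Omega^1_{\geq\varepsilon}}|g|\,d\mu<\eta$, so for $\varepsilon$ small enough the right-hand side becomes smaller than, say, $1/2$, contradicting the lower bound $1-\varepsilon$ on the left.

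There is no genuine obstacle beyond routine bookkeeping; the only subtlety worth flagging is the opening passage from the $L^1$-valued equation $Af\equiv\int_\Omega gf\,d\mu$ to the numerical identity $\int_\Omega gf\,d\mu=c_f$, which is immediate once one recalls that $Af$ is essentially a constant. Morally, the statement is just that any action limit of $(S_n)_n^\infty$ realises a Dirac-type functional concentrated at a single ``vertex at infinity,'' and no absolutely continuous measure can reproduce such concentration on arbitrarily small sets.
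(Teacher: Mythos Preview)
Your argument is correct. The key estimate
\[
1-\varepsilon<\left|\int_\Omega gf_{1,\varepsilon}\,d\mu\right|\leq\varepsilon\norm{g}_1+\int_{\Omega^1_{\geq\varepsilon}}|g|\,d\mu
\]
together with $\mu(\Omega^1_{\geq\varepsilon})\leq\varepsilon$ and the absolute continuity of the Lebesgue integral of an $L^1$ function is exactly what is needed.

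The paper's proof reaches the same inequality (written as $\int_{\Omega^1_{\geq\varepsilon}}|g|>\frac{1-2\varepsilon}{1-\varepsilon}$ after first bounding $\norm{g}_1\leq 1$), but then, rather than invoking absolute continuity as a black box, it essentially reproves that fact by hand: it builds a decreasing chain $B_n=\bigcup_{i\geq n}\Omega^1_{\geq 2^{-i}}$ with $\mu(B_n)\to 0$ yet $\int_{B_n}|g|=1$, and derives a contradiction from $\int_{\bigcap_n B_n}|g|=0$. Your appeal to absolute continuity bypasses this construction entirely and is both shorter and conceptually cleaner; the paper's version has the minor advantage of being self-contained, but it is doing nothing more than unwinding the standard proof of absolute continuity for this particular $g$.
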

\begin{proof}

Suppose that on the contrary, $g\in L^1(\Omega,\mu)$ is as stated.
Then
\[
\norm{g}_1=\int_\Omega|g|\text{d}\mu=\int_\Omega g\left(\mathds{1}_{\{\omega : g(\omega)\geq0\}}-\mathds{1}_{\{\omega : g(\omega)<0\}}\right) \text{d}\mu
=A\left(\mathds{1}_{\{\omega : g(\omega)\geq0\}}-\mathds{1}_{\{\omega : g(\omega)<0\}}\right)\leq1.
\]
Now let $x\in[-1,1]\setminus\{0\}$ and $\epsilon>0$. By the discussion between Lemma~\ref{cislessthanfinfty} and Corollary~\ref{coroll:norm=1}, we have that
\[
x-\epsilon
<\lvert c_{f_{x,\epsilon}}\rvert
=\left\lvert\int gf_{x,\epsilon}\text{d}\mu\right\rvert\leq\int \lvert gf_{x,\epsilon}\rvert
=\int_{\Omega_{<\epsilon}^x}\lvert gf_{\epsilon}\rvert+\int_{\Omega_{\geq\epsilon}^x}\lvert gf_{\epsilon}\rvert
<\epsilon\int_{\Omega_{<\epsilon}^x}\lvert g\rvert+\int_{\Omega_{\geq\epsilon}^x}\lvert g\rvert
\leq\epsilon\Big(1-\int_{\Omega_{\geq\epsilon}^x}\lvert g\rvert\Big)+\int_{\Omega_{\geq\epsilon}^x}\lvert g\rvert,
\]
and so
\[
\int_{\Omega_{\geq\epsilon}^x}\lvert g\rvert>\frac{x-2\epsilon}{1-\epsilon}.
\]
Taking $x=1$, we get
\begin{equation}\label{eqn:intto1}
\int_{\Omega_{\geq\epsilon}^1}\lvert g\rvert>\frac{1-2\epsilon}{1-\epsilon}\to1\text{ as }\epsilon\to0,
\end{equation}
and recalling $\int_{\Omega_{\geq\epsilon}^1}\lvert g\rvert\leq\int_{\Omega}\lvert g\rvert\leq1$, we conclude that $\int_{\Omega}\lvert g\rvert=1$.

Now for any integer $n\geq2$, let $B_n=\bigcup_{i=n}^\infty\Omega_{\geq2^{-i}}^1\subset\Omega$. Note that for every $m\geq n$, (\ref{eqn:intto1}) tells us that
\[
1\geq\int_{B_n}|g|\geq\int_{\Omega_{\geq2^{-m}}^1}|g|>1-2^{1-m}\to1\text{ as }m\to\infty,
\]
and so for every $n\geq2$ we have $\int_{B_n}|g|=1$ and thus $\int_{\Omega\setminus B_n}|g|=0$. On the other hand, observe that
\[
\mu(B_n)=\mu\left(\bigcup_{i=n}^\infty\Omega_{\geq2^{-i}}^1\right)
\leq\sum_{i=n}^\infty\mu\left(\Omega_{\geq2^{-i}}^1\right)\leq\sum_{i=n}^\infty2^{-i}=2^{1-n},
\]
which implies that the intersection of the chain
$B_2\supseteq B_3\supseteq B_4\supseteq B_5\supseteq\dots$ has measure zero. But then
\[
1=\int_\Omega|g|=\int_{\cap_{n=2}^\infty B_n}|g|+\int_{\cup_{n=2}^\infty(\Omega\setminus B_n)}|g|=0+0=0,
\]
which is a contradiction.
\end{proof}

We have just shown that when $A\colon L^\infty(\Omega,\mathcal{A})\to\mathbb{R}$ is an action limit of $(S_n)_n^\infty$ then
\[A\in L^\infty(\Omega)^*\setminus L^1(\Omega),\]
where we are abusing notation and identifying elements $g\in L^1(\Omega)$ with the functionals $f\mapsto\int fg \,d\mu.$
Let us now recall the structure of the dual space $L^\infty(\Omega,\mathcal{A},\mu)^*$. We mostly follow the terminology and notation of \cite{TheDual} and \cite{hitchhiker}. Let $ba(\mathcal{A})$ be the space of of bounded finitely additive signed measures, also known as signed charges. The total variation of a charge $\nu$ on the $\sigma$-algebra $\mathcal{A}$ is
\[|\nu|(\Omega)=\sup\left\{\sum_{i=1}^n|\nu(M_i)| : \{M_1,\dots,M_n\}\text{ is a measurable partition of }\Omega\right\},\]
and $\nu\in ba(\mathcal{A})$ if and only if $|\nu|(\Omega)<\infty$.
The dual of $L^\infty(\mathcal{A},\mu)$ is represented by the subset $ba(\mathcal{A},\mu)$ of $ba(\mathcal{A})$ which consists of all the finitely additive signed measures $\nu$ that satisfy
\[\nu(N)=0 \text{ whenever } \mu(N)=0,\ \ \ \text{ for all }N\in\mathcal{A}.\]
Any such charge $\nu$ then gives a functional via
\[f\mapsto\int_\Omega f \,d\nu = \lim_{n\to\infty}\sum_{i=1}^n\alpha_{n,i}\nu\left(E_{n,i}\right),\]
where $\left(\sum_{i=1}^n\alpha_{n,i}\chi_{E_{n,i}}\right)_n^\infty$ is any sequence of simple functions which converge to $f$ in the $L^\infty$-norm,
and every element of $L^\infty(\mathcal{A},\mu)^*$ arises this way. Moreover, as shown for example in Section 6.2 of \cite{hitchhiker}, the $(\infty,1)$-norm of this functional is $|\nu|(\Omega)$. Having proved Corollary~\ref{coroll:norm=1}, we could have used this fact to streamline the first part of the proof of Proposition~\ref{notinL1}.

Every $\nu\in ba(\mathcal{A})$ can be uniquely written as $\nu=\kappa_\nu+\gamma_\nu$, where $\kappa_\nu$ is countably additive, i.e., $\kappa_\nu$ is a singed measure, and $\gamma_\nu$ is purely finitely additive. Having proved that if $A$ is an action limit of stars, the purely finitely additive part of the finitely additive measure representing $A$ must be non-trivial, we now zoom in on a particular subset of purely finitely additive measures. We start by introducing $\mu$-ultrafilters, which turn out to be in one-to-one correspondence with the $\{0,1\}$-valued elements of $ba(\mathcal{A},\mu)$.

\begin{defi}[$\mu$-filter]
Let $(\Omega,\mathcal{A},\mu)$ be a measure space. A non-empty collection $\mathcal{F}$ of measurable subsets of $\Omega$ is a \emph{$\mu$-filter} if and only if
\begin{itemize}
    \item $\mu(S)>0$ for all $S\in\mathcal{F}$,
    \item if $S,T\in\mathcal{F}$ then $S\cap T\in\mathcal{F}$ , and
    \item if $S\in\mathcal{F}$ and $S\subset T$ then $T\in\mathcal{F}$.
\end{itemize}
A maximal filter is called an \emph{ultrafilter}.
\end{defi}
Every $\mu$-ultrafilter $\mathcal{U}$ now gives a finitely additive measure $\delta_\mathcal{U}$ by setting
\[
\delta_\mathcal{U}(E)=\begin{cases}
        1, & \text{if } E\in\mathcal{U}\\
        0, & \text{if } E\notin\mathcal{U}.
        \end{cases}
\]
Moreover, every $\{0,1\}$-valued element of $ba(\mathcal{A},\mu)$ arises this way, and if $\mu$ is atomless then each such charge is purely finitely additive.

\begin{theorem}\label{thm:limitobject}
Let $\mathcal{U}$ be a $\lambda$-ultrafilter on the probability space $(\Omega=[0,1),\mathcal{B}([0,1)),\lambda)$.
Then the $P$-operator $A$ given by
\begin{align*}
A\colon L^\infty(\lambda)&\to L^1(\lambda)\\
f&\mapsto\left(\int f \,d\delta_\mathcal{U}\right)\cdot\chi_\Omega
\end{align*}
satisfies $\lim_{n\to\infty} d_M(S_n,A)=0$.
\end{theorem}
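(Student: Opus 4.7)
By Lemma~\ref{lemma:closure}, the problem reduces, for every positive integer $k$, to establishing
\[
\overline{\mathcal{S}_k(A)} = \mathcal{P}\left([-1,1]^k\right) \times \left\{\delta_z : z \in [-1,1]^k\right\}.
\]
One containment is immediate: since $\delta_\mathcal{U}$ is a $\{0,1\}$-valued finitely additive probability measure, $|\phi_\mathcal{U}(f)| \leq \norm{f}_\infty$ for any $f$, so for $f_1,\dots,f_k \in B_1^{L^\infty}$ each $Af_i = c_i\chi_\Omega$ is constant with $c_i := \phi_\mathcal{U}(f_i) \in [-1,1]$, and hence $\mathcal{D}_A(f_1,\dots,f_k) = \mathcal{D}(f_1,\dots,f_k) \times \delta_{(c_1,\dots,c_k)}$ has the required product form.

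For the reverse inclusion (up to closure), fix $\nu \in \mathcal{P}([-1,1]^k)$, a target $z = (z_1,\dots,z_k) \in [-1,1]^k$, and $\varepsilon > 0$; my plan is to produce $f_1,\dots,f_k \in B_1^{L^\infty}$ with $d_{LP}\!\left(\mathcal{D}_A(f_1,\dots,f_k),\, \nu \times \delta_z\right) < \varepsilon$ in three steps. First, since $([0,1),\lambda)$ is a standard atomless probability space, realize $\nu$ as the joint distribution of some $g_1,\dots,g_k \in B_1^{L^\infty}$. Second, use atomlessness of $\lambda$ together with ultrafilter maximality to pick $E \in \mathcal{U}$ with $\lambda(E) < \varepsilon$: iteratively split any starting member of $\mathcal{U}$ into two positive-measure Borel halves and retain the one lying in $\mathcal{U}$. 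Third, paste the target values onto $E$ by setting
\[
f_i := g_i\chi_{\Omega\setminus E} + z_i\chi_E \in B_1^{L^\infty}.
\]
Since $f_i \equiv z_i$ on $E \in \mathcal{U}$ and any simple function supported on $\Omega \setminus E$ has every level set disjoint from $E$, hence outside $\mathcal{U}$, hence $\delta_\mathcal{U}$-null, approximating $f_i$ in $L^\infty$-norm by simple functions that equal $z_i$ on $E$ yields $\phi_\mathcal{U}(f_i) = z_i$; in particular $Af_i = z_i\chi_\Omega$ exactly. The second marginal of $\mathcal{D}_A(f_1,\dots,f_k)$ is therefore $\delta_z$, while the first marginal differs from $\nu = \mathcal{D}(g_1,\dots,g_k)$ only on the $\varepsilon$-small set $E$, hence is within $\varepsilon$ of $\nu$ in Lévy-Prokhorov distance, giving the desired inequality.

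The two load-bearing points to verify carefully are (i) that a $\lambda$-ultrafilter on an atomless probability space contains sets of arbitrarily small positive measure, which follows from iterated halving combined with the standard ultrafilter dichotomy (for any positive-measure partition $E = E^1 \sqcup E^2$ of a member of $\mathcal{U}$, exactly one part is in $\mathcal{U}$, else one can construct a strictly larger filter contradicting maximality), and (ii) that overwriting the $g_i$ by the constant $z_i$ on such an $E \in \mathcal{U}$ genuinely forces $\phi_\mathcal{U}(f_i) = z_i$ independently of the values of $g_i$. Both are short once the finitely-additive bookkeeping is in place, and granted them, Lemma~\ref{lemma:closure} converts the computed $k$-profile closure into the claimed action convergence.
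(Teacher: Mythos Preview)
Your proof is correct and follows the same overall skeleton as the paper's: reduce via Lemma~\ref{lemma:closure}, dispatch the easy inclusion, and for the hard inclusion overwrite the values of suitable test functions on a small set $E\in\mathcal{U}$ so as to force $\phi_\mathcal{U}(f_i)=z_i$. Where you diverge from the paper is in how you realize the target marginal $\nu\in\mathcal{P}([-1,1]^k)$. The paper invokes Lemma~\ref{lemma:uniformapprox} to approximate $\nu$ by a discrete measure $\frac{1}{n}\sum_i\delta_{x_i}$ and then builds explicit step functions on the partition $\big\{[\tfrac{i-1}{n},\tfrac{i}{n})\big\}_{i=1}^n$, choosing $E_n$ to be whichever interval of this partition lies in $\mathcal{U}$. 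You instead appeal to the fact that $([0,1),\lambda)$ is a standard atomless probability space to realize $\nu$ \emph{exactly} as the law of some $(g_1,\dots,g_k)$, and obtain your small $E\in\mathcal{U}$ by iterated halving together with the $\mu$-ultrafilter dichotomy. Your route is a touch more streamlined---it bypasses Lemma~\ref{lemma:uniformapprox} entirely and gets the second marginal equal to $\delta_z$ on the nose rather than up to $1/n$---at the price of invoking the (standard) Borel-isomorphism fact that every law on $[-1,1]^k$ can be realized on $([0,1),\lambda)$. The paper's approach is more explicitly constructive and self-contained. Both verifications of the ``load-bearing'' points (i) and (ii) that you flag are sound.
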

\begin{proof}
By Lemma~\ref{lemma:closure}, this is equivalent to showing that for every positive integer $k$,
\[\overline{\mathcal{S}_k(A)}
=\mathcal{P}\left([-1,1]^k\right)\times
\{\delta_z : z\in[-1,1]^k\}.\]
Let $k$ be fixed. The operator $A$ sends every element $f\in L^\infty(\Omega)$ to a constant function where the absolute value of the constant is at most $\norm{f}_\infty$, so in particular elements of $B_1^{L^\infty}$ are sent to $[-1,1]$. Thus $\mathcal{S}_k(A)\subset\mathcal{P}\left([-1,1]^k\right)\times
\{\delta_z : z\in[-1,1]^k\}$.
On the other hand, for every $\mu\in\mathcal{P}\left([-1,1]^k\right)$ and $z\in[-1,1]^k$, we will now construct a sequence $(\mathcal{D}_A(f_{n,1},\dots,f_{n,k}))_n^\infty$ in $\mathcal{S}_k(A)$ such that $\lim_n d_{LP}\left(\mu\times\delta_z,\mathcal{D}_A(f_{n,1},\dots,f_{n,k})\right)=0$, implying that $\mathcal{P}\left([-1,1]^k\right)\times
\{\delta_z : z\in[-1,1]^k\}\subseteq\overline{\mathcal{S}_k(A)}$.

We start by fixing a sequence $(E_n)_n^\infty$ in $\mathcal{U}$ such that for every $n\geq1$, $E_n=\big[\frac{i-1}{n},\frac{i}{n}\big)$ for some $i\in[n]$. Given $\mu\in\mathcal{P}\left([-1,1]^k\right)$ and $z\in[-1,1]^k$, let $(\mu_n=\frac{1}{n}\sum_i^n\delta_{x_i})_n^\infty$ be a sequence given by Lemma~\ref{lemma:uniformapprox}. Now for every $n\geq1$ and $j\in[k]$ let
\[
f_{n,j}(\omega)=
\begin{cases}
(x_i)_j, & \text{ if } \omega\in\big[\frac{i-1}{n},\frac{i}{n}\big)\setminus E_n\\
z_j, & \text{ if } \omega\in E_n.
\end{cases}
\]
Then
\begin{align*}
d_{LP}\big(\mu_n\times\delta_z,\mathcal{D}_A(f_{n,1},\dots,f_{n,k})\big)
&=d_{LP}\bigg(\mu_n\times\delta_z,\frac{1}{n}\Big(\delta_z+
\sum_{\substack{i\in[n]\\
[\frac{i-1}{n},\frac{i}{n})\neq E_n}}\delta_{x_i}\Big)\times\delta_z\bigg)\\
&=d_{LP}\bigg(\frac{1}{n}\sum_{i=1}^n\delta_{x_i},\frac{1}{n}\Big(\delta_z+
\sum_{\substack{i\in[n]\\
[\frac{i-1}{n},\frac{i}{n})\neq E_n}}\delta_{x_i}\Big)\bigg)\leq\frac{1}{n}.
\end{align*}
Hence, by the triangle inequality,
\begin{align*}
d_{LP}\big(\mu\times\delta_z,\mathcal{D}_A(f_{n,1},\dots,f_{n,k})\big)
&\leq d_{LP}\left(\mu\times\delta_z,\mu_n\times\delta_z\right)+d_{LP}\big(\mu_n\times\delta_z,\mathcal{D}_A(f_{n,1},\dots,f_{n,k})\big)\\
&\leq d_{LP}(\mu,\mu_n)+\frac{1}{n}
\to0 \text{ as } n\to\infty.
\end{align*}
\end{proof}

\begin{remark}
Note that the limit object in Theorem~\ref{thm:limitobject} is definitely not unique. Not only do we have a choice of the ultrafilter, but we could also take any $(M,\mathcal{B}(M),\kappa)$ instead of $(\Omega=[0,1),\mathcal{B}([0,1)),\lambda)$, where $(M,d)$ is a totally bounded metric space such that for infinitely many positive integers $n$, it is possible to partition $M$ into $M_1,\dots,M_n\in\mathcal{B}(M)$ with $\kappa(M_i)=\frac{1}{n}$ for all $i\in[n]$.
\end{remark}

\section{Graphs $G^+$ with a vertex that neighbours everything}\label{section:main}

Stars are in fact just a special case of the following construction.
\begin{defi}
Let $G$ be a graph. Then $G^+$ is the graph on $|V(G)|+1$ vertices formed from $G$ by adding a single vertex $v$ and connecting it to all the other vertices.
\end{defi}
Seen like this, a star with $n$ leaves is $E_n^+$ where $E_n$ is the empty graph on $n$ vertices. This viewpoint allows us to extend the result from the previous section to more sequences of the form $(G_n^+)_n^\infty$ than just $(S_n)_n^\infty=(E_n^+)_n^\infty$. Before stating the full theorem, we set up the scene with a couple of lemmas and propositions.

\begin{lemma}\label{lemma:verticesToInfinity}
Let $(G_n)_n^\infty$ be Cauchy in $d_M$.
If $\limsup_{n\to\infty}|V(G_n)|=\infty$ then $\liminf_{n\to\infty}|V(G_n)|=\infty$,
and for any action limit $A\colon L^\infty(\Omega,\nu)\to L^1(\Omega,\nu)$ of $(G_n)_n^\infty$, $(\Omega,\nu)$ is atomless.
\end{lemma}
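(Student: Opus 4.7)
I would prove the two assertions by contradiction, both driven by a single \emph{spreading} test function: on the graph $G_n$ with $m_n := |V(G_n)|$ vertices, take
\[
g_n(v) = -1 + \frac{2(v-1)}{m_n - 1} \in [-1,1], \qquad v \in [m_n].
\]
The image measure $(g_n)_*$ is uniform on $m_n$ equally spaced points of $[-1,1]$, so as $m_n \to \infty$ the $x$-marginal of $\mathcal{D}_{G_n}(g_n) \in \mathcal{S}_1(G_n)$ converges in $d_{LP}$ to the Lebesgue probability measure $\lambda$ on $[-1,1]$. I will also use the easy estimate that any probability measure on $[-1,1]$ supported on at most $c'$ points lies at $d_{LP}$-distance at least $1/(c'+1)$ from $\lambda$, since the $\epsilon$-fattening of $c'$ points covers Lebesgue mass at most $c'\epsilon$ inside $[-1,1]$.

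\emph{Part 1.} Suppose for contradiction that $\liminf_n m_n = c < \infty$. Pass to a subsequence on which $m_{n_j} = c'$ is a constant (some $c' \le c$), and then, since there are only finitely many simple graphs on $c'$ vertices, further to one on which $G_{n_j} = G$ is a single fixed graph. Every element of $\mathcal{S}_1(G)$ is supported on at most $c'$ points of $\mathbb{R}^2$, so its $x$-marginal is supported on at most $c'$ points of $[-1,1]$. Because $\limsup_n m_n = \infty$, a further subsequence $(n_k)$ has $m_{n_k} \to \infty$, and the functions $g_{n_k}$ furnish measures in $\mathcal{S}_1(G_{n_k})$ whose $x$-marginals converge to $\lambda$. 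The estimate above then forces $d_H(\mathcal{S}_1(G_{n_k}), \mathcal{S}_1(G)) \geq 1/(2(c'+1))$ for all large $k$, contradicting the Cauchy property of $(G_n)_n$ in $d_M$.

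\emph{Part 2.} By Part 1 we may assume $m_n \to \infty$. Suppose $\omega_0 \in \Omega$ is an atom with $\nu(\{\omega_0\}) = \epsilon > 0$. For every $f \in B_1^{L^\infty}$ the $x$-marginal of $\mathcal{D}_A(f)$ is the push-forward $f_*\nu$, which assigns mass at least $\epsilon$ to $\{f(\omega_0)\}$; thus every measure in $\mathcal{S}_1(A)$ has $x$-marginal with an atom of mass at least $\epsilon$. The spreading construction produces $\mu_n \in \mathcal{S}_1(G_n)$ whose $x$-marginals converge to $\lambda$, and the hypothesis that $A$ is an action limit supplies $\nu_n \in \mathcal{S}_1(A)$ with $d_{LP}(\mu_n,\nu_n) \to 0$, so $(\nu_n)_x$ converges weakly to $\lambda$ as well.

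\emph{The main obstacle} is to conclude that the ``atom of mass $\ge \epsilon$'' property survives this weak limit. I would argue by portmanteau: each $(\nu_n)_x$ carries an atom of mass at least $\epsilon$ at some $a_n \in [-1,1]$; compactness of $[-1,1]$ yields a subsequence along which $a_n \to a^*$; then applying the closed-set inequality $\mu(F) \ge \limsup_n \mu_n(F)$ with $F = [a^*-\delta, a^*+\delta]$ gives $\lambda([a^*-\delta, a^*+\delta]) \ge \epsilon$ for every $\delta>0$, hence $\lambda(\{a^*\}) \ge \epsilon$, contradicting atomlessness of Lebesgue measure. Everything else is routine book-keeping.
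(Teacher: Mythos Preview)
Your proof is correct and proceeds along the same lines as the paper's: both reduce to the $x$-marginals of the $1$-profile, use a spreading test function to place the Lebesgue measure on $[-1,1]$ in the limiting marginal profile, and then derive contradictions from its incompatibility with measures supported on at most $c'$ points (Part~1) or carrying a point mass of fixed size (Part~2). One minor slip in Part~2: an atom of $(\Omega,\nu)$ need not be a singleton $\{\omega_0\}$, but since any measurable function is $\nu$-a.e.\ constant on an atom, $f_*\nu$ still carries a point mass of size at least $\epsilon$, and your portmanteau argument goes through unchanged.
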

\begin{proof}
Suppose on the contrary that there is a natural number $k$ such that for infinitely many $n$, the graph $G_n$ has $k$ vertices. Then there is a subsequence $(G_{m_j})_j$ such that $|V(G_{m_i})|=k$ for all $i\geq1$. On the other hand, let $(G_{n_i})_i$ be a subsequence with strictly increasing number of vertices.
For a measure $\mu\in\mathcal{P}\left(\mathbb{R}^2\right)$, let $\mu_x\in\mathcal{P}(\mathbb{R})$ be its $x$-axis marginal. Then $d_{LP}\left(\mu_x,\kappa_x\right)\leq d_{LP}(\mu,\kappa)$ for all $\mu,\kappa\in\mathcal{P}(\mathbb{R}^2)$, and thus
\[
d_H\left((\mathcal{S}_1)_x(G_n),(\mathcal{S}_1)_x(G_m)\right)\leq d_H\left(\mathcal{S}_1(G_n),\mathcal{S}_1(G_m)\right)\leq 2d_M(G_n,G_m)
\]
for any $n,m\in\mathbb{N}$, where
\[(\mathcal{S}_1)_x(G):=\left\{\mu_x : \mu\in\mathcal{S}_1(G)\right\}=\left\{\mathcal{D}(f) : f\in B_1^{L^\infty}\right\}.\]
Now $\lim_i (\mathcal{S}_1)_x(G_{n_i})$ contains the uniform measure on $[-1,1]$, which implies that both $\lim_j (\mathcal{S}_1)_x(G_{m_j})$ and $\overline{(\mathcal{S}_1)_x(A)}$ must contain it too. But $\lim_j (\mathcal{S}_1)_x(G_{m_j})$ only contains measures which are approximable arbitrarily well by $k$ atoms, so we get a contradiction with $\liminf_n|V(G_n)|<\infty$. Similarly, $\overline{(\mathcal{S}_1)_x(A)}$ containing the uniform measure on $[-1,1]$ implies that $(\Omega,\nu)$ is atomless.
\end{proof}

Lemma~\ref{lemma:verticesToInfinity} implies that if $(G_n^+)_n^\infty$ is Cauchy in $d_M$ and $\limsup_{n\to\infty}|V(G_n)|=\infty$ then the special added vertices $v_n$ will have smaller and smaller weight, yet the fact that they are adjacent to all the other vertices in their graph means that the value at $v_n$ has a great impact on the outcome after applying the adjacency operator. In particular, if two functions $f,g:V(G_n^+)\to\mathbb{R}$ only differ in the value they assign to the added vertex $v_n$, then as $n$ grows larger, the resulting measures $\mathcal{D}_{G_n}(f)$ and $\mathcal{D}_{G_n}(g)$ will more and more look like they are equivalent up to a shift along the $y$-axis by $f(v_n)-g(v_n)$. To formalise the intuition forming upon this observation, we introduce the following definition.

\begin{defi}
Let $\mu$ be a measure on $\mathbb{R}^n$ and $v\in\mathbb{R}^n$ a vector. Then $\mu\oplus v$ is the measure on $\mathbb{R}^n$ such that
\[(\mu\oplus v)(T)=\mu(T\oplus\{-v\})=\mu(\{t-v : t\in T\})\]
for any measurable subset $T$.
For any set $\mathcal{S}$ of measures on $\mathbb{R}^n$ and set $V\subseteq\mathbb{R}^n$ of vectors,
\[\mathcal{S}\oplus V:=\{\mu\oplus v : \mu\in\mathcal{S},v\in V\}.\]
\end{defi}

So as not to clutter the exposition with technicalities, we will only focus on sequences $(G_n)_n^\infty$ with $|V(G_n)|\to\infty$.

\begin{proposition}\label{prop:profiles_of_G+}
If $(G_n)_n^\infty$ is Cauchy in $d_M$ and $\limsup_n|V(G_n)|=\infty$ then $(G_n^+)_n^\infty$ is Cauchy in $d_M$ too.
Moreover, if the limiting closures of the $k$-profiles of $(G_n)_n^\infty$ are $X_k:=\lim_{n\to\infty}\overline{\mathcal{S}_k(G_n)}$ then the limiting closures of $\mathcal{S}_k\left(G_n^+\right)$ are $X_k\oplus V_k$ where $V_k=\{0\}^k\times[-1,1]^k$.
\end{proposition}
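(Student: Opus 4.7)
The plan is to express $\mathcal{D}_{G_n^+}(f_1,\dots,f_k)$ explicitly in terms of $\mathcal{D}_{G_n}(\tilde f_1,\dots,\tilde f_k)$, where $\tilde f_i$ denotes the restriction of $f_i$ to $V(G_n)$. Writing $v_n$ for the added vertex of $G_n^+$ and $c_i := f_i(v_n)$, the defining property of $G_n^+$ gives $(G_n^+ f_i)(w)=(G_n\tilde f_i)(w)+c_i$ for $w\in V(G_n)$, while $(G_n^+ f_i)(v_n)=\sum_{u\in V(G_n)}f_i(u)$. Setting $N_n:=|V(G_n)|$ and using the uniform probability measure on $V(G_n^+)$, a direct computation yields
\[
\mathcal{D}_{G_n^+}(f_1,\dots,f_k)=\tfrac{N_n}{N_n+1}\bigl(\mathcal{D}_{G_n}(\tilde f_1,\dots,\tilde f_k)\oplus(0,\dots,0,c_1,\dots,c_k)\bigr)+\tfrac{1}{N_n+1}\delta_p
\]
for some point $p\in\mathbb{R}^{2k}$. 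These two measures differ by a signed measure of total variation at most $2/(N_n+1)$, so their Lévy-Prokhorov distance is bounded by $2/(N_n+1)$---a bound that is \emph{independent} of the location of $p$, which is essential because the last $k$ coordinates of $p$ can be as large as $N_n$. This is the one conceptual subtlety, and it is resolved simply by the insensitivity of $d_{LP}$ to where a vanishing-mass atom sits.

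The next step is to verify the elementary fact that the set-translation $\oplus V_k$ is $1$-Lipschitz for the Hausdorff distance on subsets of $(\mathcal{P}(\mathbb{R}^{2k}),d_{LP})$: pointwise, translation $\mu\mapsto\mu\oplus v$ is an isometry, from which a short sup-inf argument (matching the translation vector on both sides) gives $d_H(X\oplus V_k,Y\oplus V_k)\leq d_H(X,Y)$. Combining this with the previous display in both directions---the reverse direction amounts to extending any $g_1,\dots,g_k$ representing an element of $\mathcal{S}_k(G_n)$ to $V(G_n^+)$ by prescribing the value $c_i$ at $v_n$---yields
\[
d_H\bigl(\mathcal{S}_k(G_n^+),\,\mathcal{S}_k(G_n)\oplus V_k\bigr)\leq\tfrac{2}{N_n+1}.
\]

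The rest is triangle-inequality assembly. Lemma~\ref{lemma:verticesToInfinity} ensures $N_n\to\infty$, so $d_H(\mathcal{S}_k(G_n^+),\mathcal{S}_k(G_m^+))\leq 2/(N_n+1)+d_H(\mathcal{S}_k(G_n),\mathcal{S}_k(G_m))+2/(N_m+1)\to 0$ as $n,m\to\infty$, and summing with the weights $2^{-k}$ (each term is at most $1$, so dominated convergence applies) gives $d_M(G_n^+,G_m^+)\to 0$. For identifying the limit, using $1$-Lipschitzness of $\oplus V_k$ in the middle,
\[
d_H\bigl(\mathcal{S}_k(G_n^+),\,X_k\oplus V_k\bigr)\leq \tfrac{2}{N_n+1}+d_H\bigl(\mathcal{S}_k(G_n)\oplus V_k,\,X_k\oplus V_k\bigr)\leq\tfrac{2}{N_n+1}+d_H\bigl(\mathcal{S}_k(G_n),X_k\bigr)\to 0,
\]
so $\lim_n\overline{\mathcal{S}_k(G_n^+)}=X_k\oplus V_k$, since closures are irrelevant for the Hausdorff distance.
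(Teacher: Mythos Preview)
Your argument is correct and rests on the same core computation as the paper: the explicit identity
\[
\mathcal{D}_{G_n^+}(f_1,\dots,f_k)=\tfrac{N_n}{N_n+1}\bigl(\mathcal{D}_{G_n}(\tilde f_1,\dots,\tilde f_k)\oplus(0,\dots,0,c_1,\dots,c_k)\bigr)+\tfrac{1}{N_n+1}\delta_p,
\]
from which both you and the paper extract an $O(1/N_n)$ bound on $d_{LP}$ independent of the location of the stray atom $\delta_p$.

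Where you diverge is in the packaging of the reverse inclusion. The paper treats the two directions separately: for an arbitrary convergent sequence $\mu_n^+\in\mathcal{S}_k(G_n^+)$ it records the values $v_n\in[-1,1]^k$ at the added vertex, passes to a subsequence along which $v_n\to v$ by compactness of $[-1,1]^k$, and then shows $\mu^+\oplus(-v)\in X_k$ via the isometry property of $\oplus v$. You instead prove once and for all that $X\mapsto X\oplus V_k$ is $1$-Lipschitz for $d_H$, which together with the uniform bound $d_H(\mathcal{S}_k(G_n^+),\mathcal{S}_k(G_n)\oplus V_k)\leq 2/(N_n+1)$ handles both directions and the Cauchy claim simultaneously. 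Your route is shorter and avoids the subsequence extraction entirely; the paper's route is more explicit about why $X_k\oplus V_k$ is actually closed (its compactness argument is essentially the proof of closedness), a point you leave implicit in the phrase ``closures are irrelevant for the Hausdorff distance''. Both are fine; the closedness of $X_k\oplus V_k$ follows immediately from $V_k$ being compact and each $\oplus v$ being an isometry.
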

\begin{proof}
Let us fix $\mu\in X_k$ and $v=\{0\}^k\times(v_1,\dots,v_k)\in V_k$. By the definition of $X_k$, there is a sequence $(\mu_n)_n^\infty=\left(\mathcal{D}_{G_n}(f_1,\dots,f_k)\right)_n^\infty$ of measures in $\mathcal{S}_k(G_n)$ such that $\lim_{n\to\infty}d_{LP}(\mu_n,\mu)=0$. Let $\mu_n^{+v}$ be the measure in $\mathcal{S}_k(G_n^+)$ obtained by assigning the same values to the vertices of the subgraph $G_n$ as $f_1,\dots,f_k$ do and giving the values $(v_1,\dots,v_k)$ to the additional vertex. Formally,
\[\mu_n^{+v}:=\mathcal{D}_{G_n^+}\left(f_1^{+v_1},\dots,f_k^{+v_k}\right)\ \text{ where }\ f_i^{+v_i}(u)=\begin{cases}
f_i(u) &\text{if }u\in V(G_n)\\
v_i &\text{if }\{u\}=V(G_n^+)\setminus V(G_n).
\end{cases}\]
Then
\[d_{LP}(\mu_n^{+v},\mu_n\oplus v)\leq\frac{1}{|V(G_n)|+1}\]
because $(\mu\oplus v)(A\oplus\{v\})=\mu(A)$ for any measurable $A$, and $\left((f_1^{+v_1},\dots,f_k^{+v_k})G_n^+\right)(u)=\left((f_1,\dots,f_k)G_n\right)(u)+(v_1,\dots,v_k)$.
By the triangle inequality,
\[d_{LP}(\mu_n^{+v},\mu\oplus v)\leq d_{LP}(\mu_n^{+v},\mu_n\oplus v)+d_{LP}(\mu_n\oplus v,\mu\oplus v)\leq\frac{1}{|V(G)|+1}+d_{LP}(\mu_n,\mu),\]
where the right-hand side tends to 0 by Lemma~\ref{lemma:verticesToInfinity}.
Hence for every measure in $X_k\oplus V_k$, there is a sequence in $\mathcal{S}_k(G_n^+)$ converging to it.

Vice versa, suppose that $(\mu_n^+=\mathcal{D}_{G_n^+}(f_1^+,\dots,f_k^+))_n^\infty$ is a sequence with $\mu_n^+\in\mathcal{S}_k(G_n^+)$ which is convergent in $d_{LP}$ to a measure $\mu^+$. We want to show that $\mu^+\in X_k\oplus V_k$. Let $v_n\in[-1,1]^k$ be the values assigned by $f_1^+,\dots,f_k^+$ to the added vertex in $G_n^+$. The set $[-1,1]^k$ is compact, so there is a subsequence $\left(\mu^+_{n_i}\right)_i^\infty$ on which $v_n$ converges to some $v\in[-1,1]^k$.

Employing the triangle inequality again gives
\begin{equation}\label{ineq}
d_{LP}(\mu_{n_i}\oplus v,\mu^+)\leq d_{LP}(\mu_{n_i}\oplus v,\mu_{n_i}\oplus v_{n_i})+d_{LP}(\mu_{n_i}\oplus v_{n_i},\mu^+_{n_i})+d_{LP}(\mu^+_{n_i},\mu^+)
\end{equation}
for every $i\in\mathbb{N}$. The second summand on the right-hand side is bounded above by $\frac{1}{|V(G_{n_i})|+1}$ like before and the third summand goes to 0 by the assumption of convergence of $(\mu_n^+)_n^\infty$. To bound the first summand, we observe two facts: that $d_{LP}(\eta\oplus w,\nu)=d_{LP}(\eta,\nu\oplus(-w))$ and that $d_{LP}(\eta,\eta\oplus w)\leq\norm{w}$ for any measures $\eta$ and $\nu$ and vector $w$. The first is true because for any $\varepsilon>0$,
\begin{align*}
&(\eta\oplus w)(U)\leq\nu\left(U^\varepsilon\right)+\varepsilon \text{ for all measurable } U\\
\Leftrightarrow\ 
&(\eta\oplus w)(U\oplus\{w\})\leq\nu\left((U\oplus\{w\})^\varepsilon\right)+\varepsilon \text{ for all measurable } U \\
\Leftrightarrow\ 
&\eta(U)\leq\nu\left(U^\varepsilon\oplus\{w\}\right)+\varepsilon \text{ for all measurable } U \\
\Leftrightarrow\ 
&\eta(U)\leq(\nu\oplus(-w))\left(U^\varepsilon\right)+\varepsilon \text{ for all measurable } U
\end{align*}
and similarly the other way round. The second is true because $U\oplus\{w\}\subseteq U^{\norm{w}+\varepsilon}$ for any $\varepsilon>0$, implying that
\begin{align*}
    \eta(U)=(\eta\oplus w)(U\oplus\{w\})\leq(\eta\oplus w)\left(U^{\norm{w}+\varepsilon}\right)\\
    \text{and } (\eta\oplus w)(U)=\eta(U\oplus\{-w\})\leq\eta\left(U^{\norm{w}+\varepsilon}\right).
\end{align*}
Returning to inequality~\ref{ineq}, we now get
\[
d_{LP}\left(\mu_{n_i},\mu^+\oplus(-v)\right)=d_{LP}\left(\mu_{n_i}\oplus v,\mu^+\right)\leq\norm{v_{n_i}-v}+\frac{1}{|V(G_{n_i})|+1}+d_{LP}(\mu_{n_i}^+,\mu^+)\to0\text{ as } i\to\infty.
\]
Since $\mu_{n_i}\in\mathcal{S}_k(G_{n_i})$, we conclude that $\mu:=\mu^+\oplus(-v)\in X_k$, and so $\mu^+=\mu\oplus v\in X_k\oplus V_k$ as claimed.
\end{proof}

Having established the form of $\lim_n\mathcal{S}_k\left(G_n^+\right)$, we could now show along more technical, but in essence similar lines as in the proof of Proposition~\ref{prop:not_selfadjoint} that no action limit of $(G_n^+)_n^\infty$, where $\lim |V(G_n)|=\infty$, can possibly be self-adjoint.

\begin{notation}
Given a measure space $(\Omega,\mathcal{A},\mu)$, we write $\chi_F$ for the characteristic function of a measurable set $F\in\mathcal{A}$, and $\mathds{1}_E$ for the indicator functions of events $E$ from any other measure space. We also write $B_r^{L^p}(g)$ for the closed $L^p$-ball of radius $r$ around the function $g\in L^p$, and we denote by 0 the function which sends (almost every) element of $\Omega$ to 0.
Finally, $B_1^{L^p}$ is a shorthand for the unit ball around 0.
\end{notation}

We are now preparing to prove that when $G_n\to A$, there indeed is a $P$-operator $A^+$ whose $k$-profiles are like those prescribed by Proposition~\ref{prop:profiles_of_G+} for the prospective limit of $(G_n^+)_n^\infty$. We would like to keep $A$ in $A^+$ in some form because it encodes the possibly complicated structure of $(G_n)_n^\infty$ which is of course also present in $(G_n^+)_n^\infty$. But at the same time we must introduce the shifts arising from the presence of the special vertex in $G_n^+$ which has an outsized influence with respect to its increasingly negligible measure. Like we did in Section~\ref{section:stars}, we will use an ultrafilter-based functional
\begin{align*}
\phi_\mathcal{U} \colon L^\infty&\rightarrow\mathbb{R}\\
\lim_{n\to\infty}\sum_{i=1}^n\alpha_{i,n}\chi_{E_{i,n}}&\mapsto\lim_{n\to\infty}\sum_{i=1}^n\alpha_{i,n}\mathds{1}_{E_{i,n}\in\mathcal{U}}
\end{align*}
for this -- decreasing nested sequences of smaller and smaller sets in $\mathcal{U}$ will play the role that the special vertex played in the proof of Proposition~\ref{prop:profiles_of_G+}. However, we must be careful that the ultrafilter $\mathcal{U}$ does not interfere with the key properties of the original limit $A$. The following theorem, which is also of independent interest, tells us that $\mathcal{U}$ can be chosen to satisfy our needs.

\begin{theorem}\label{thm:FilterMagic}
Let $A\colon L^\infty(\Omega,\mu)\to L^1(\Omega,\mu)$ be a linear operator, where $(\Omega,\mu)$ is an atomless separable finite measure space. Then there is a $\mu$-filter $\mathcal{F}$ such that any ultrafilter $\mathcal{U}$ containing $\mathcal{F}$ has the following property:\\
for all $f\in B_1^{L^\infty}$, $\varepsilon>0$, $a\in[-1,1]$, there is $f_{a,\varepsilon}\in B_1^{L^\infty}$ such that
\begin{enumerate}[label=\Roman*.]
    \item $\norm{f-f_{a,\varepsilon}}_1<\varepsilon$
    \item $\norm{Af-Af_{a,\varepsilon}}_1<\varepsilon$
    \item $\phi_\mathcal{U}(f_{a,\varepsilon})\in(a-\varepsilon,a+\varepsilon)$.
\end{enumerate}
\end{theorem}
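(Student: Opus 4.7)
The plan is to exhibit $\mathcal{F}$ as the filter generated by a nested descending chain $E_1\supset E_2\supset\cdots$ of measurable sets with $\mu(E_k)\searrow 0$ and with restricted operator norms
\[
\norm{A|_{E_k}}_{\infty\to 1}:=\sup\bigl\{\norm{A(g\chi_{E_k})}_1 : g\in B_1^{L^\infty}\bigr\}
\]
also tending to $0$. Given such a chain, any ultrafilter $\mathcal{U}\supseteq\mathcal{F}$ automatically contains every $E_k$, and so, for prescribed $(f,a,\varepsilon)$, I pick $k$ with $2\mu(E_k)<\varepsilon$ and $2\norm{A|_{E_k}}_{\infty\to 1}<\varepsilon$ and set $f_{a,\varepsilon}:=a\chi_{E_k}+f\chi_{\Omega\setminus E_k}\in B_1^{L^\infty}$. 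Property I follows from $\int_{E_k}|f-a|\,d\mu\leq 2\mu(E_k)<\varepsilon$; property II from $A(f-f_{a,\varepsilon})=A\bigl((f-a)\chi_{E_k}\bigr)$ together with $\norm{f-a}_\infty\leq 2$; and property III holds because $f_{a,\varepsilon}\equiv a$ on $E_k\in\mathcal{U}$, so approximating $f\chi_{\Omega\setminus E_k}$ in $L^\infty$ by simple functions whose defining sets lie in $\Omega\setminus E_k$ (and are therefore outside $\mathcal{U}$) yields $\phi_\mathcal{U}(f_{a,\varepsilon})=a$ exactly.

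The crux is then a lemma: \emph{for every $F\in\mathcal{A}$ with $\mu(F)>0$ and every $\eta>0$, there is $E\subseteq F$ with $0<\mu(E)<\eta$ and $\norm{A|_E}_{\infty\to 1}<\eta$.} Once this is available, I build the $E_k$'s inductively by setting $E_0:=\Omega$ and applying the lemma with $F=E_{k-1}$ and $\eta=2^{-k}$; the collection $\mathcal{F}:=\{S\in\mathcal{A} : \mu(S)>0,\ S\supseteq E_k\text{ for some }k\}$ is then a $\mu$-filter, since closure under supersets is built in and nestedness of the $E_k$'s gives closure under finite intersections. Note that once we have any $E\subseteq F$ with small restricted norm, smallness of $\mu(E)$ is automatic: restricted norms decrease under inclusion, $\norm{A|_{E'}}_{\infty\to 1}\leq\norm{A|_E}_{\infty\to 1}$ for $E'\subseteq E$, and atomlessness lets us shrink $E$ to any positive measure we like. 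So the lemma reduces to the assertion $\inf_{E\subseteq F,\,\mu(E)>0}\norm{A|_E}_{\infty\to 1}=0$.

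To prove this infimum is zero, I argue by contradiction: suppose it equals some $\alpha>0$. For each positive integer $n$, atomlessness lets me partition $F$ into $n$ measurable pieces $F_1,\dots,F_n$ of positive measure; for each $i$ pick $g_i\in B_1^{L^\infty}$ supported on $F_i$ with $\norm{Ag_i}_1>\alpha/2$. For $\sigma\in\{\pm 1\}^n$ put $g_\sigma:=\sum_i\sigma_i g_i$; the disjoint supports force $\norm{g_\sigma}_\infty\leq 1$. The Khintchine inequality (lower bound, sharp constant $1/\sqrt 2$) gives $\mathbb{E}_\sigma|Ag_\sigma(\omega)|\geq \frac{1}{\sqrt 2}\bigl(\sum_i(Ag_i(\omega))^2\bigr)^{1/2}\geq \frac{1}{\sqrt{2n}}\sum_i|Ag_i(\omega)|$ pointwise in $\omega$ (the second step is Cauchy--Schwarz), and integrating yields
\[
\mathbb{E}_\sigma\norm{Ag_\sigma}_1\geq \frac{1}{\sqrt{2n}}\sum_{i=1}^n\norm{Ag_i}_1 > \frac{\alpha\sqrt n}{2\sqrt 2}.
\]
Some $\sigma$ must beat the average, so $\norm{Ag_\sigma}_1>\alpha\sqrt{n}/(2\sqrt 2)$, which for $n>8\norm{A}_{\infty\to 1}^2/\alpha^2$ exceeds $\norm{A}_{\infty\to 1}$, a contradiction. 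Hence $\alpha=0$.

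The main obstacle is exactly this Khintchine/averaging step, which requires $\norm{A}_{\infty\to 1}<\infty$; I will interpret the hypothesis ``linear operator'' as ``bounded linear operator'', which is the setting in which the rest of the paper operates (every $A$ under consideration is a $P$-operator). The remaining ingredients --- the inductive construction of the chain, the $\mu$-filter axioms, and the routine check that a function equal to a constant on a set in $\mathcal{U}$ has $\phi_\mathcal{U}$-value equal to that constant --- are all bookkeeping.
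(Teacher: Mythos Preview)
Your proof is correct under the additional hypothesis $\norm{A}_{\infty\to 1}<\infty$, and it takes a genuinely different route from the paper's.

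The paper does not use a single nested chain. Instead it exploits separability of $L^1(\Omega)$ twice: first to choose a countable family $\mathcal{B}=\bigcup_n\mathcal{B}_n\subset B_1^{L^\infty}$ such that every $f\in B_1^{L^\infty}$ is $L^1$-close to some $f_m\in\mathcal{B}$ with $Af$ also $L^1$-close to $Af_m$; and second, for each $f_m$, to build a two-parameter grid of sets $E_{m,k}$ (nested in both indices) on which $f_m$ has essential oscillation at most $2^{1-k}$, $\mu(E_{m,k})$ is small, and $\norm{A\chi_{E_{m,k}}}_1<1/k$. The last condition is obtained by a pigeonhole argument in the separable space $L^1$: among an uncountable chain $\{R(t)\}_{t\in(0,1]}$ there must be $t<s$ with $\norm{A\chi_{R(s)}-A\chi_{R(t)}}_1$ small, whence $E:=R(s)\setminus R(t)$ works. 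The filter is generated by all the $E_{m,k}$, and the perturbation is $f_{a,\varepsilon}=f_m+(a-x_m)\chi_{E_{m,k}}$ (a scalar multiple of an indicator), rather than your wholesale replacement of $f$ by $a$ on $E_k$.

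The two approaches trade hypotheses. The paper only ever needs to control $\norm{A\chi_E}_1$ for carefully chosen single sets $E$, and its pigeonhole-in-$L^1$ step requires only that $A$ map into $L^1$, not that $\norm{A}_{\infty\to1}$ be finite; the paper explicitly remarks on this. Your Khintchine argument, by contrast, needs the global bound $\norm{A}_{\infty\to1}<\infty$ to derive a contradiction, but in exchange it controls the full restricted norm $\norm{A|_E}_{\infty\to1}$ and \emph{never uses separability}. This is precisely the trade-off the paper's remark anticipates: ``One may hope to eliminate the condition of separability\dots especially if we would instead insist that $\norm{A}_{\infty\to 1}$ be finite.'' Your argument confirms that hope. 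It is also structurally simpler: a one-dimensional chain and a direct definition of $f_{a,\varepsilon}$, with no need for a countable dense family, the two-level covering in Figure~1, or the delicate interleaving of Figure~3.
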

\begin{proof}
We will generate $\mathcal{F}$ in countably many steps; using the separability of $L^1(\Omega)$, we will generate countably many functions $f\in L^\infty(\Omega)$ that represent the action of $A$ to an arbitrary precision (see Figure~\ref{figure:covering_by_balls}), and build a filter $\mathcal{F}$ that meshes well with the properties of these functions.

\begin{figure}
    \centering
    \begin{subfigure}[p]{1\textwidth}
 \begin{tikzpicture}
\draw[thick](0,0) ellipse (2.5 and 2.5);
\draw[thick](0,0) ellipse (1.5 and 1.5);
\draw[thick](8,0) ellipse (3 and 3);
\node[color=black, very thick] at (-3.1,1) {$L^\infty(\Omega)$};
\node[color=black, very thick] at (0.9,1.57) {$B_1^{L^\infty}$};
\node[color=black, very thick] at (11.6,1.2) {$L^1(\Omega)$};
    \draw [-stealth](2.74,0.4) -- (4.9,0.4);
\node[color=black, very thick] at (3.9,1) {$A$};
\draw[rotate around={135:(8,0)}, thick](8,0) ellipse (1 and 2);
\node[color=black, very thick] at (8.1,1.79) {$A\left(B_1^{L^\infty}\right)$};

\draw (6.5,-1.2) circle (0.25);
\draw (6.5,-0.9) circle (0.25);
\draw (6.6,-0.5) circle (0.25);
\draw (6.7,-0.1) circle (0.25);
\draw (6.7,-1.3) circle (0.25);
\draw (6.8,0.2) circle (0.25);
\draw (6.8,-1.1) circle (0.25);
\draw (6.9,-0.4) circle (0.25);
\draw (7,-1.4) circle (0.25);
\draw (7,0) circle (0.25);
\draw (7,0.4) circle (0.25);
\draw (7.1,-0.8) circle (0.25);
\draw (7.2,-1.1) circle (0.25);
\draw (7.2,0) circle (0.25);
\draw (7.3,-0.5) circle (0.25);
\draw (7.3,0.7) circle (0.25);
\draw (7.4,0.3) circle (0.25);
\draw (7.5,-1.3) circle (0.25);
\draw (7.5,-0.8) circle (0.25);
\draw (7.6,-0.3) circle (0.25);
\draw (7.6,1) circle (0.25);
\draw (7.7,-0.6) circle (0.25);
\draw (7.7,0.6) circle (0.25);
\draw (7.8,-1.2) circle (0.25);
\draw (7.9,-0.2) circle (0.25);
\draw (8,-0.8) circle (0.25);
\draw (8,0) circle (0.25);
\draw (8,0.5) circle (0.25);
\draw (8,1) circle (0.25);
\draw (8.1,-0.5) circle (0.25);
\draw (8.1,1.2) circle (0.25);
\draw (8.2,0.2) circle (0.25);
\draw (8.2,0.7) circle (0.25);
\draw (8.3,1.1) circle (0.25);
\draw (8.3,-1) circle (0.25);
\draw (8.4,-0.2) circle (0.25);
\draw (8.5,0.9) circle (0.25);
\draw (8.5,1.4) circle (0.25);
\draw (8.5,-0.6) circle (0.25);
\draw (8.6,0.3) circle (0.25);
\draw (8.7,0.6) circle (0.25);
\draw (8.7,-0.3) circle (0.25);
\draw (8.8,0.1) circle (0.25);
\draw (8.9,1.4) circle (0.25);
\draw (9,0.5) circle (0.25);
\draw (9.1,0.7) circle (0.25);
\draw (9,1.2) circle (0.25);
\draw (9.1,-0.2) circle (0.25);
\draw (9.2,0.1) circle (0.25);
\draw (9.3,1.3) circle (0.25);
\draw (9.3,0.4) circle (0.25);
\draw (9.4,1) circle (0.25);
\draw (9.5,0.8) circle (0.25);

\end{tikzpicture}       
        \caption{We first cover $A\left(B_1^{L^\infty}\right)$ with $\frac{1}{n}$-balls, i.e., $A\left(B_1^{L^\infty}\right)$ is represented by the countable collection $\mathcal{C}_n$ up to an error of $1/n$ in the 1-norm.}
    \end{subfigure}
    
    \begin{subfigure}[p]{1\textwidth}
 \begin{tikzpicture}
\draw[thick](0,0) ellipse (2.5 and 2.5);
\draw[thick](0,0) ellipse (1.5 and 1.5);
\draw[thick](8,0) ellipse (3 and 3);
\node[color=black, very thick] at (-3.1,1) {$L^\infty(\Omega)$};
\node[color=black, very thick] at (0.9,1.57) {$B_1^{L^\infty}$};
\node[color=black, very thick] at (11.6,1.2) {$L^1(\Omega)$};
    \draw [-stealth](2.74,0.4) -- (4.9,0.4);
    \draw [stealth-](2.74,-0.4) -- (4.9,-0.4);
\node[color=black, very thick] at (3.9,1) {$A$};
\node[color=black, very thick] at (3.9,-1) {$A^{-1}$};
\draw[rotate around={135:(8,0)}, thick](8,0) ellipse (1 and 2);
\node[color=black, very thick] at (8.1,1.79) {$A\left(B_1^{L^\infty}\right)$};

\draw (6.5,-1.2) circle (0.25);
\draw (6.5,-0.9) circle (0.25);
\draw (6.6,-0.5) circle (0.25);
\draw (6.7,-0.1) circle (0.25);
\draw (6.7,-1.3) circle (0.25);
\draw (6.8,0.2) circle (0.25);
\draw (6.8,-1.1) circle (0.25);
\draw (6.9,-0.4) circle (0.25);
\draw (7,-1.4) circle (0.25);
\draw (7,0) circle (0.25);
\draw (7,0.4) circle (0.25);
\draw (7.1,-0.8) circle (0.25);
\draw (7.2,-1.1) circle (0.25);
\draw (7.2,0) circle (0.25);
\draw (7.3,-0.5) circle (0.25);
\draw (7.3,0.7) circle (0.25);
\draw (7.4,0.3) circle (0.25);
\draw [teal] (7.5,-1.3) circle (0.25);
\draw (7.5,-0.8) circle (0.25);
\draw (7.6,-0.3) circle (0.25);
\draw (7.6,1) circle (0.25);
\node[color=teal, very thick] at (7.98,-1.7) {\small $B_{1/n}^{L^1}\left(g\right)$};
\draw (7.7,-0.6) circle (0.25);
\draw (7.7,0.6) circle (0.25);
\draw (7.8,-1.2) circle (0.25);
\draw (7.9,-0.2) circle (0.25);
\draw (8,-0.8) circle (0.25);
\draw (8,0) circle (0.25);
\draw (8,0.5) circle (0.25);
\draw (8,1) circle (0.25);
\draw (8.1,-0.5) circle (0.25);
\draw (8.1,1.2) circle (0.25);
\draw (8.2,0.2) circle (0.25);
\draw (8.2,0.7) circle (0.25);
\draw (8.3,1.1) circle (0.25);
\draw (8.3,-1) circle (0.25);
\draw (8.4,-0.2) circle (0.25);
\draw (8.5,0.9) circle (0.25);
\draw (8.5,1.4) circle (0.25);
\draw (8.5,-0.6) circle (0.25);
\draw (8.6,0.3) circle (0.25);
\draw (8.7,0.6) circle (0.25);
\draw (8.7,-0.3) circle (0.25);
\draw (8.8,0.1) circle (0.25);
\draw (8.9,1.4) circle (0.25);
\draw (9,0.5) circle (0.25);
\draw (9.1,0.7) circle (0.25);
\draw (9,1.2) circle (0.25);
\draw (9.1,-0.2) circle (0.25);
\draw (9.2,0.1) circle (0.25);
\draw (9.3,1.3) circle (0.25);
\draw (9.3,0.4) circle (0.25);
\draw (9.4,1) circle (0.25);
\draw (9.5,0.8) circle (0.25);

\draw [teal] plot [smooth cycle] coordinates {(-0.8,-1) (0.2,-0.7) (1.2,0.1) (1.4,-0.3) (1.7,-0.5) (1.2,-0.8) (1.4,-1.3) (1.2,-1.4) (0.2,-1.1) (-0.5,-1.3)};
\node[color=teal, very thick] at (0.26,-1.8) {\small $A^{-1}\left(B_{1/n}^{L^1}(g)\right)$};

\draw (-0.67,-1.03) circle (0.25);
\draw (-0.4,-1.1) circle (0.25);
\draw (-0.08,-0.84) circle (0.25);
\draw (-0.04,-1.04) circle (0.25);
\draw (0.3,-0.89) circle (0.25);
\draw (0.39,-1) circle (0.25);
\draw (0.5,-0.79) circle (0.25);
\draw (0.65,-0.6) circle (0.25);
\draw (0.7,-1.06) circle (0.25);
\draw (0.9,-0.39) circle (0.25);
\draw (0.93,-0.99) circle (0.25);
\draw (1.06,-0.77) circle (0.25);
\draw (1.17,-0.15) circle (0.25);
\draw (1.25,-0.42) circle (0.25);

\end{tikzpicture}       
        \caption{Each function $f\in B_1^{L^\infty}$ belongs to a preimage $A^{-1}\left(B_{1/n}^1(g)\right)$, which can itself be represented up to a $\frac{1}{n}$-error in the 1-norm by a countable collection $\mathcal{D}_{n,g}$.}
    \end{subfigure}
\caption{Constructing the countable family $\mathcal{B}_n=\bigcup_{g\in\mathcal{C}_n}\mathcal{D}_{n,g}$ of functions in the unit ball $B_1^{L^\infty}$ whose $\frac{1}{n}$-balls cover it}
\label{figure:covering_by_balls}
\end{figure}

Let us first consider the image $A\left(B_1^{L^\infty}\right)\subseteq L^1(\Omega,\mu)$ of the unit ball of $L^\infty(\Omega)$ under the action of the operator $A$. Since $L^1(\Omega,\mu)$ is separable, we can pick, for any positive integer $n$, a countable collection $\mathcal{C}_n$ of functions $g\in L^1(\Omega,\mu)$ such that the union of their $\frac{1}{n}$-balls covers $A\left(B_1^{L^\infty}\right)$, that is,
\[A\left(B_1^{L^\infty}\right)\subseteq\bigcup_{g\in\mathcal{C}_n}B_{1/n}^{L^1}(g).\]
Next, we look at the preimages $A^{-1}\left(B_{1/n}^{L^1}(g)\right)\subseteq L^\infty(\Omega,\mu)$ of these covering $\frac{1}{n}$-balls.
Given one such fixed preimage $A^{-1}\left(B_{1/n}^{L^1}(g)\right)$, we view it as a subset of $L^1(\Omega,\mu)$ and pick a countable collection $\mathcal{D}_{n,g}$ of functions in $A^{-1}\left(B_{1/n}^{L^1}(g)\right)\cap B_1^{L^\infty}$ whose $\frac{1}{n}$-balls cover $A^{-1}\left(B_{1/n}^{L^1}(g)\right)\cap B_1^{L^\infty}$.
Altogether, this produces the countable family $\mathcal{B}_n=\bigcup_{g\in\mathcal{C}_n}\mathcal{D}_{n,g}$ of functions from the unit ball of $L^\infty(\Omega,\mu)$ which by construction satisfies that for every $f\in B_1^{L^\infty}$, there is some $h_f\in\mathcal{B}_n$ such that both $\norm{h_f-f}_1\leq\frac{1}{n}$ and $\norm{Ah_f-Af}_1\leq\frac{2}{n}$.

We will now construct a filter $\mathcal{F}$ based on the countable collection $\mathcal{B}=\bigcup_{n=1}^\infty\mathcal{B}_n$. Let us start by fixing an ordering $f_1,f_2,\dots$ of the elements of $\mathcal{B}\subset B_1^{L^\infty}$.

For each $m\geq1$, we will find a sequence $E_{m,1}\supset E_{m,2}\supset E_{m,3}\dots$ of measurable sets such that
\begin{enumerate}
    \item $\mu(E_{m,k})>0$ for all $k$, but $\mu(E_{m,k})\to0$ as $k\to\infty$,
    \item $\norm{A\chi_{E_{m,k}}}_1\to0$ as $k\to\infty$, and
    \item the range of $f_m|_{E_{m,k}}$ shrinks, i.e. $\esssup(f_m|_{E_{m,k}})-\essinf(f_m|_{E_{m,k}}) \to0$ as $k\to\infty$.
\end{enumerate}
The conditions 1.--3. will help us to prove the corresponding requirements I.--III. from the statement of the theorem. In particular, condition 3. helps us track the value of $\phi_\mathcal{U}(f_m)$, which we will be able to change by adding functions which are non-zero only on $E_{m,k}$.
\begin{figure}
\centering
\[
\begin{array}{@{}c@{\;}c@{\;}c@{\;}c@{\;}c@{\;}c@{\;}c@{}}
E_{1,1} & \supset & E_{1,2} & \supset & E_{1,3} & \supset &\dots \\
\upin && \upin && \upin &&\\
E_{2,1} & \supset & E_{2,2} & \supset & E_{2,3}& \supset &\dots\\
\upin && \upin && \upin &&\\
E_{3,1} & \supset & E_{3,2} & \supset & E_{3,3}& \supset &\dots\\
\vdots && \vdots && \vdots &&
\end{array}
\]
\caption{The measurable sets $E_{m,k}$ satisfy that for any fixed $m\geq1$, the real numbers $\mu(E_{m,k})$, $\norm{A\chi_{E_{m,k}}}_1$, and $\sup(f_m|_{E_{m,k}})-\inf(f_m|_{E_{m,k}})$ all tend to zero as $k$ tends to infinity.}
\label{figure:GridOfSets}
\end{figure}
Moreover, the sets $E_{m,k}$ will also satisfy the inclusions $E_{1,k}\supset E_{2,k}\supset E_{3,k}\dots$ for every positive integer $k$ (see Figure~\ref{figure:GridOfSets}). Together with that $\mu(E_{m,k})>0$ for all $m,k\in\mathbb{N}$, this implies that
\begin{equation}\label{inequality}
\mu(E_{m_1,k_1}\cap\dots\cap E_{m_n,k_n})>0
\end{equation}
for any $n\in\mathbb{N}$ and $m_1,\dots,m_n,k_1,\dots,k_n\in\mathbb{N}$ because $E_{m_1,k_1}\cap\dots\cap E_{m_n,k_n}\supseteq E_{\max_{i\in[n]}\{m_i\},\max_{i\in[n]}\{k_i\}}$. Finally, inequality ($\ref{inequality}$) tells us that defining
\[
\mathcal{F}:=\left\{\left(E_{m_1,k_1}\cap\dots\cap E_{m_n,k_n}\right)\cup B : n\in\mathbb{N}, B\subseteq\Omega\text{ is measurable}\right\}
\]
gives a $\mu$-filter.

Let us now construct the sequences $(E_{m,k})_{k=1}^\infty$ by induction as follows.
For $m=1$, put
\[
E_{1,1}=\begin{cases}
    f_1^{-1}\left([0,1]\right), & \text{if } \mu\left(f_1^{-1}([0,1])\right)>0\\
    f_1^{-1}\left([-1,0)\right), & \text{if } \mu\left(f_1^{-1}([0,1])\right)=0.
	 \end{cases}
\]
Note that $\mu\left(f_1^{-1}([-1,0))\cup f_1^{-1}([0,1])\right)=\mu(\Omega)$ because $f_1\in B_1^{L^\infty}$, so we necessarily have $\mu(E_{1,1})>0$.
Having obtained $E_{1,k}$ as a subset of $f_1^{-1}\left([x_{1k},x_{1k}+2^{1-k}]\right)$
where $x_{1k}$ is some number in $\left[-1,1-2^{1-k}\right]$, we obtain $E_{1,k+1}$ by first restricting to $R$, where $R$ is given by
\[
R=\begin{cases}
E_{1,k}\cap f_1^{-1}\left([x_{1k},x_{1k}+2^{-k}]\right), & \text{if }\mu\left(E_{1,k}\cap f_1^{-1}([x_{1k},x_{1k}+2^{-k}])\right)>0\\
E_{1,k}\cap f_1^{-1}\left([x_{1k}+2^{-k},x_{1k}+2^{1-k}]\right), & \text{if }\mu\left(E_{1,k}\cap f_1^{-1}([x_{1k},x_{1k}+2^{-k}])\right)=0,
\end{cases}
\]
and then to $R'$, where $R'$ is any measurable subset of $R$ satisfying that $0<\mu(R')\leq\frac{\mu(R)}{2}$ (see Figure~\ref{figure:first_nested_sequence}).
We then set $x_{1,k+1}$ to be equal to $x_{1k}$ if $R$ was chosen to be a subset of $f_1^{-1}\left([x_{1k},x_{1k}+2^{-k}]\right)$ and to $x_{1k}+2^{-k}$ otherwise.
Restricting to $R$ and then $R'$ ensures, respectively, that
\[
\sup\left(f_1|_{E_{1,k+1}}\right)-\inf\left(f_1|_{E_{1,k+1}}\right)\leq2^{-k}
\, \, \text{ and } \, \, 
\mu\left(E_{1,k+1}\right)\leq\frac{\mu(\Omega)}{2^k}.
\]
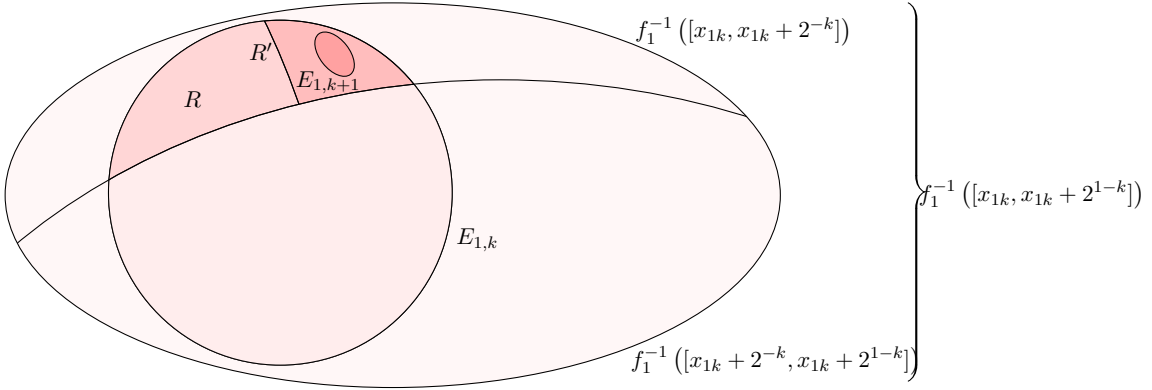
\begin{figure}
\centering
\begin{tikzpicture}[scale=0.85, every node/.style={scale=0.79}]
\filldraw[fill=red!3](0,0) ellipse (6 and 3);
\filldraw[fill=red!26](-1.74,0.04) ellipse (2.66 and 2.69);
\node at (1.31,-0.7) {$E_{1,k}$};

\begin{scope}
  \clip (-1.74,0.04) ellipse (2.66 and 2.69);
  \filldraw[fill=red!16] (-6.66,-3.55) ellipse (6 and 10);
\end{scope}
\begin{scope}
  \clip (-1.74,0.04) ellipse (2.66 and 2.69);
  \draw (-6.66,-3.55) ellipse (6 and 10);
\end{scope}

\node at (5.43,2.56) {$f_1^{-1}\left([x_{1k},x_{1k}+2^{-k}]\right)$};
\node at (5.93,-2.58) {$f_1^{-1}\left([x_{1k}+2^{-k},x_{1k}+2^{1-k}]\right)$};
\draw[decorate, decoration={calligraphic brace, amplitude=6pt, mirror}, thick] (7.98,-2.9) -- (7.98,3);

\node at (9.89,0.01) {$f_1^{-1}\left([x_{1k},x_{1k}+2^{1-k}]\right)$};

\node at (-2.04,2.24) {$R'$};

\begin{scope}
  \clip (-1.74,0.04) ellipse (2.66 and 2.69);
  \filldraw[fill=red!7] (1.67,-6.25) ellipse (10.25 and 8.05);
\end{scope}
\begin{scope}
  \clip (0,0) ellipse (6 and 3);
  \draw (1.67,-6.25) ellipse (10.25 and 8.05);
\end{scope}

\node at (-3.1,1.5) {$R$};

\filldraw[fill=red!37,rotate around={38:(-0.9,2.2)}](-0.9,2.2) ellipse (0.23 and 0.4);
\node at (-1,1.76) {$E_{1,k+1}$};

\draw (-1.74,0.04) ellipse (2.66 and 2.69);
\end{tikzpicture}
\caption{Constructing $E_{1,k+1}$ as a subset of $E_{1,k}$}
\label{figure:first_nested_sequence}
\end{figure}
Since $(\Omega,\mu)$ is atomless, there is an uncountable collection $\mathcal{R}$ of measurable subsets of $R'$ such that for any $S_1\neq S_2\in\mathcal{R}$, we have $\mu(S_1\triangle S_2)>0$ and either $S_1\subset S_2$ or $S_2\subset S_1$. We now consider the uncountable collection of functions $A\chi_S, S\in\mathcal{R}$ in $L^1(\Omega,\mu)$, and conclude that by separability of $L^1(\Omega,\mu)$, there must be some $S\subset T\in\mathcal{R}$ such that $\norm{A\chi_T-A\chi_S}_1<\frac{1}{k}$. Linearity of $A$ then implies that $\norm{A\chi_{T\setminus S}}_1<\frac{1}{k}$, and so we set $E_{1,k+1}:=T\setminus S$.

For $m>1$, suppose we already have $E_{m-1,1}\supset E_{m-1,2}\supset\dots$ as desired. Now if there is some $j\geq1$ such that $\mu\left(E_{m-1,j}\cap f_m^{-1}([0,1])\right)=0$ then we set $E_{m,1}:=E_{m-1,1}\cap f_m^{-1}([-1,0))$, and we necessarily have that $\mu(E_{m,1}\cap E_{m-1,j})>0$ for all $j\in\mathbb{N}$. Otherwise $\mu\left(E_{m-1,j}\cap f_m^{-1}([0,1])\right)>0$ for all $j\in\mathbb{N}$, and we set $E_{m,1}:=E_{m-1,1}\cap f_m^{-1}([0,1])$.
Suppose now that we have obtained $E_{m,k}$ as a subset of $f_m^{-1}\left([x_{mk},x_{mk}+2^{1-k}]\right)$ for some $x_{mk}\in\left[-1,1-2^{1-k}\right]$ and that $\mu(E_{m,k}\cap E_{m-1,j})>0$ for all $j\geq k$. We obtain $E_{m,k+1}$ by first restricting to
\[
R=\begin{cases}
& \mkern-18mu E_{m-1,k+1}\cap E_{m,k}\cap f_m^{-1}\left([x_{mk},x_{mk}+2^{-k}]\right), \\
&\qquad \qquad \qquad \qquad \qquad \quad \quad \quad \quad \text{if }\mu\left(E_{m-1,j}\cap E_{m,k}\cap f_m^{-1}([x_{mk},x_{mk}+2^{-k}])\right)>0 \text{ for all }j>k\\
& \mkern-18mu E_{m-1,k+1}\cap E_{m,k}\cap f_m^{-1}\left([x_{mk}+2^{-k},x_{mk}+2^{1-k}]\right),\\
&\qquad \qquad \qquad \qquad \qquad \quad \quad \quad \quad \text{if }\mu\left(E_{m-1,j}\cap E_{m,k}\cap f_m^{-1}([x_{mk},x_{mk}+2^{-k}])\right)=0 \text{ for some }j>k.
\end{cases}
\]

As before, we set $x_{m,k+1}$ to be $x_{mk}$ if $R$ is chosen to be a subset of $f_m^{-1}\left([x_{mk},x_{mk}+2^{-k}]\right)$ and to be $x_{mk}+2^{-k}$ otherwise.
Let us now partition $R$ into $\bigcup_{j=k+1}^\infty R_j$, where
\[
R_j:=(E_{m-1,j}\setminus E_{m-1,j+1})\cap R.
\]
\begin{figure}
\centering
\begin{tikzpicture}[thick,scale=0.85, every node/.style={scale=0.79}]
\filldraw[color=red!60, fill=red!3, very thick](0,0) ellipse (4 and 3.4);

\node [inner sep=0pt] at (-0.68,-3.48) {\includegraphics[angle=171,width=1.37cm]{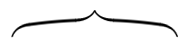} };

\node [inner sep=0pt] at (-1.57,-3.44) {\includegraphics[angle=158,width=3.5cm]{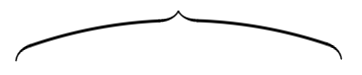} };

\node [inner sep=0pt] at (-2.99,-2.65) {\includegraphics[angle=134,width=6.46cm]{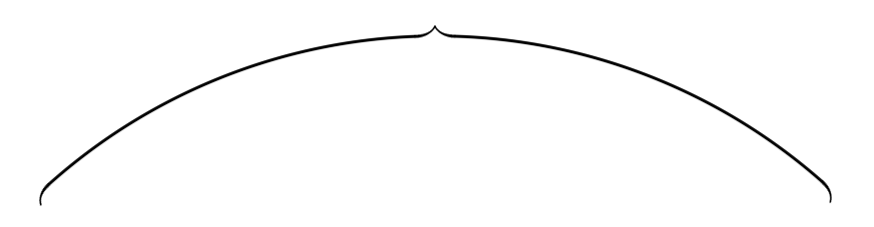} };

\node[color=red!90] at (4.7,-0.16) {$E_{m-1,k+1}$};
\begin{scope}
  \clip (-4,-3.4) rectangle (0,3.4);
  \fill[pattern={Lines[angle=45,distance={4pt},line width={0.1pt}]}] (0,0) ellipse (4 and 3.4);
\end{scope}
\filldraw[color=red!60, fill=red!5, very thick](0,0) ellipse (2.61 and 2.22);
\node[color=red, ultra thick] at (3.3,-0.08) {$E_{m-1,k+2}$};
\begin{scope}
  \clip (-4,-3.4) rectangle (0,3.4);
  \fill[pattern={Lines[angle=135,distance={4pt},line width={0.1pt}]}] (0,0) ellipse (2.61 and 2.22);
\end{scope}
\filldraw[color=red!60, fill=red!5, very thick](0,0) ellipse (1.06 and 0.9);
\node[color=red!90, very thick] at (1.81,0) {$E_{m-1,k+3}$};
\begin{scope}
  \clip (-4,-3.4) rectangle (0,3.4);
  \fill[pattern={Lines[angle=45,distance={4pt},line width={0.1pt}]}] (0,0) ellipse (1.06 and 0.9);
\end{scope}
\filldraw[color=red!60, fill=red!5, very thick](0,0) ellipse (0.16 and 0.14);
\draw[thick, -] (0,3.4) arc (90:270:4 and 3.4);
\draw[thick] (0,3.4) -- (0,-3.4);
    \fill[red!5] (-3.11,1) ellipse (0.42 and 0.21);
    \fill[red!5] (-1.72,0.6) ellipse (0.4 and 0.2);
    \fill[red!5] (-0.56,0.2) ellipse (0.37 and 0.19);
\node[color=black, very thick] at (-3.11,1) {$R_{k+1}$};
\node[color=black, very thick] at (-1.72,0.6) {$R_{k+2}$};
\node[color=black, very thick] at (-0.56,0.2) {$R_{k+3}$};
\draw[thin] (0,0) -- (-4,0);
\draw[thin] (0,0) -- (-2.8,-2.43);
\draw[thin] (0,0) -- (-1.2,-3.24);

\node[color=black, very thick] at (-3.8,-3.38) {\footnotesize $R\left(1/2\right)$};
\node[color=black, very thick] at (-1.72,-3.85) {\footnotesize $R(1/4)$};
\node[color=black, very thick] at (-0.57,-3.69) {\footnotesize $R\left(1/8\right)$};

\draw [teal] plot [smooth cycle] coordinates {(-5.1,-0.6) (-3.9,2.1) (0,3.46) (0.13,0) (1.5,-2) (3,-2.6) (2.62,-4.16) (-1.78,-4.48) (-4.38,-3.5)};
\node[color=teal, very thick] at (3.54,-3.5) {$E_{m,k}$};
\end{tikzpicture}
\caption{Constructing $E_{m,k+1}$ as a subset of $E_{m-1,k+1}$}
\label{figure:R_independent_of_previous_filter}
\end{figure}
Importantly, we observe that we must have $\mu(R_j)>0$ for infinitely many $j$: if there was some $J>k$ such that $\mu(R_j)=0$ for all $j\geq J$ then we would conclude that
\[
0=\sum_{j=J}^\infty\mu(R_j)=\mu\left(\bigcup_{j=J}^\infty R_j\right)=\mu\left(E_{m-1,J}\cap R\right),
\]
which contradicts the definition of $R$.
Next, by atomlessness of $\Omega$, for every $j>k$, there is a family $\{R_j(t) : t\in(0,1]\}$ of measurable subsets of $R_j$, which satisfies that
\[
\mu(R_j(t))=t\mu(R_j)
\]
and $R_j(t)\subseteq R_j(s)$ whenever $t\leq s$. Let us define $R(t)\subseteq R$ to be
\[
R(t):=\bigcup_{j=k+1}^\infty R_j(t)
\]
for any $t\in(0,1]$, and note that $R(t)\subsetneq R(s)$ whenever $t<s$ (see Figure~\ref{figure:R_independent_of_previous_filter}).
Crucially,
\[\mu\left((R(s)\setminus R(t))\cap E_{m-1,j}\right)>0\]
for all $t<s$ and $j>k$. This is because $(R(s)\setminus R(t))\cap E_{m-1,j}$ contains
\[
\left(R_{j'}(s)\setminus R_{j'}(t)\right)\cap E_{m-1,j'}=R_{j'}(s)\setminus R_{j'}(t)
\]
where $j'\geq j$ is an integer such that $\mu(R_{j'})>0$.

We are now ready to consider the uncountable family $A\chi_{R(t)}$, $t\in(0,1]$ of functions in $L^1(\Omega)$. As before, by separability of $L^1(\Omega)$, there must be some $t<s$ such that $\norm{A\chi_{R(s)}-A\chi_{R(t)}}_1<\frac{1}{k}$.
Linearity of $A$ then implies that
$\norm{A\chi_{R(s)\setminus R(t)}}_1<\frac{1}{k}$,
and we set $E_{m,k+1}:=R(s)\setminus R(t)$.

We have now constructed the sets $E_{m,k}$ as promised and, as already said, we set the $\mu$-filter $\mathcal{F}$ to be
\[
\mathcal{F}:=\left\{\left(E_{m_1,k_1}\cap\dots\cap E_{m_n,k_n}\right)\cup B : n\in\mathbb{N}, B\subseteq\Omega\text{ is measurable}\right\}.
\]
Let now $\mathcal{U}$ be any ultrafilter containing $\mathcal{F}$. It remains to show that for a given $f\in B_1^{L^\infty}$, $\varepsilon>0$ and $a\in[-1,1]$, there is $f_{a,\varepsilon}\in B_1^{L^\infty}$ satisfying the conditions 1.--3.
We first note that for every $m\geq1$, the sequence $(x_{mk})_{k=1}^\infty$ of real numbers converges. Let $x_m$ be the limit, and let us observe that by the careful construction of $\mathcal{F}$, we can conclude that $\phi_\mathcal{U}(f_m)=x_m$ for every $m$. This follows from the general fact that if some measurable set $E$ is in $\mathcal{U}$ then for any function $g\in L^\infty$, the number $\phi_\mathcal{U}(g)$ must be in $\left[\essinf(g|_E),\esssup(g|_E)\right]$. Next, let $n$ be a positive integer such that $\frac{2}{n}<\varepsilon$ and let us recall what we noted at the beginning of the proof, that is that by the construction of $\mathcal{B}_n$, there is some $h_f\in\mathcal{B}_n$ such that $\norm{h_f-f}_1\leq\frac{1}{n}$ and $\norm{Ah_f-Af}_1\leq\frac{2}{n}$. Suppose that $h_f$ appears as $f_m$ in the ordering of $\mathcal{B}=\bigcup_n\mathcal{B}_n$. Then we would like to set $f_{a,\varepsilon}$ to be $f_m+(a-x_m)\chi_{E_{m,k}}$ for some large enough $k$ because linearity of $\phi_\mathcal{U}$ and $E_{m,k}$ being in $\mathcal{F}$ gives us that
\begin{equation}\label{eqn:phiGives_a}
\phi_\mathcal{U}\left(f_m+(a-x_m)\chi_{E_{m,k}}\right)
=\phi_\mathcal{U}(f_m)+(a-x_m)\phi_\mathcal{U}(\chi_{E_{m,k}})
=x_m+(a-x_m)=a.
\end{equation}
However, $f_m+(a-x_m)\chi_{E_{m,k}}$ may not be in the unit ball $B_1^{L^\infty}$, so we will need to do some technical tinkering. On $\Omega\setminus E_{m,k}$, the functions $f_m$ and $f_{a,\varepsilon}$ are equal, so we do not get out of $B_1^{L^\infty}$ there because $f_m=h_f$ is in $B_1^{L^\infty}$ to start with. On $E_{m,k}$, $f_m$ takes values in $\left[x_{mk},x_{mk}+2^{1-k}\right]$, and so $f_m+(a-x_m)\chi_{E_{m,k}}$ takes values in $V=\left[x_{mk}+a-x_m, x_{mk}+2^{1-k}+a-x_m\right]$. But $x_m$ itself is in $\left[x_{mk},x_{mk}+2^{1-k}\right]$, and so $V\subset\left[a-2^{1-k},a+2^{1-k}\right]$. Setting $f_{a,\varepsilon}$ to be
\[
f_{a,\varepsilon}:=\begin{cases}
    f_m+(a-x_m)\chi_{E_{m,k}}, & \text{if } a\in\left[-1+2^{1-k},1-2^{1-k}\right]\\
    f_m+(1-2^{1-k}-x_m)\chi_{E_{m,k}}, & \text{if } a\in\left(1-2^{1-k},1\right]\\
    f_m+(-1+2^{1-k}-x_m)\chi_{E_{m,k}}, & \text{if } a\in\left[-1,-1+2^{1-k}\right)
	 \end{cases}
\]
therefore ensures that $f_{a,\varepsilon}$ is in the unit ball $B_1^{L^\infty}$, and we fix $k$ to be a positive integer large enough so that
\[
\max\left\{\frac{\mu(\Omega)}{2^{k-2}},\frac{2}{k-1},2^{1-k}\right\}<\varepsilon-\frac{2}{n}.
\]
Finally, we check that $f_{a,\varepsilon}$ satisfies the conditions I.--III.
Firstly,
\[
\norm{f-f_{a,\varepsilon}}_1\leq\norm{f-f_m}_1+\norm{f_m-f_{a,\varepsilon}}_1<\frac{1}{n}+2\norm{\chi_{E_{m,k}}}_1=\frac{1}{n}+2\mu(E_{m,k})\leq\frac{1}{n}+2\mu(E_{1,k})\leq\frac{1}{n}+\frac{\mu(\Omega)}{2^{k-2}}<\varepsilon.
\]
Secondly,
\[
\norm{Af-Af_{a,\varepsilon}}_1\leq\norm{Af-Af_m}_1+\norm{Af_m-Af_{a,\varepsilon}}_1\leq\frac{2}{n}+2\norm{A\chi_{E_{m,k}}}_1\leq\frac{2}{n}+\frac{2}{k-1}<\varepsilon.
\]
Thirdly, if $a\in\left[-1+2^{1-k},1-2^{1-k}\right]$ then $\phi_\mathcal{U}(f_{a,\varepsilon})=a$ as per equation (\ref{eqn:phiGives_a}). If $a\in(1-2^{1-k},1]$ then
\[
\phi_\mathcal{U}(f_{a,\varepsilon})=1-2^{1-k}=a+\left(1-2^{1-k}-a\right),
\]
where $1-2^{1-k}-a\in[-2^{1-k},0)$, and similarly for $a\in[-1,-1+2^{1-k})$, so
\[
|\phi_\mathcal{U}(f_{a,\varepsilon})-a|\leq2^{1-k}<\varepsilon
\]
as desired.
\end{proof}

\begin{remark}[on assumptions in Theorem~\ref{thm:FilterMagic}]
In the theorem above, finiteness of $\mu(\Omega)$ is implicitly used in viewing subsets of $L^\infty(\Omega)$ as subsets of $L^1(\Omega)$. Note, however, that we do \emph{not} require $\norm{A}_{\infty\to1}$ to be bounded! One may hope to eliminate the condition of separability of $L^1(\Omega)$, and especially if we would instead insist that $\norm{A}_{\infty\to1}$ be finite, it does not seem unreasonable to believe that this might indeed be possible.
\end{remark}

We now restate and prove our main theorem, of which Theorem~\ref{thm:limitobject} is a special case.

\begin{thmnonumber}
Let $(G_n)_n^\infty$ be a sequence of finite graphs with $|V(G_n)|\to\infty$, whose adjacency operators action converge to a $P$-operator $A\colon L^\infty(\Omega,\nu)\to L^1(\Omega,\nu)$, where $(\Omega,\nu)$ is separable.
Then there is a $\nu$-filter $\mathcal{F}$ on $\Omega$ such that for any $\nu$-ultrafilter $\mathcal{U}$ extending $\mathcal{F}$, both
\begin{equation*}
\begin{aligned}[c]
A^+\colon L^\infty(\Omega,\nu)&\to L^1(\Omega,\nu)\\
\text{given by }\left(A^+g\right)(\omega)&=(Ag)(\omega)+\phi_\mathcal{U}(g)
\end{aligned}
\qquad\text{ and }\qquad
\begin{aligned}[c]
A^-\colon L^\infty(\Omega,\nu)&\to L^1(\Omega,\nu)\\
\text{given by }\left(A^-g\right)(\omega)&=(Ag)(\omega)-\phi_\mathcal{U}(g)
\end{aligned}
\end{equation*}
are action limits of $(G_n^+)_n^\infty$, where $\phi_\mathcal{U}\colon L^\infty(\Omega,\nu)\to\mathbb{R}$ is the functional sending $\lim_{n\to\infty}\sum_{i=1}^n\alpha_{n,i}\chi_{E_{n,i}}$ to $\lim_{n\to\infty}\sum_{i=1}^n\alpha_{n,i}\mathds{1}_{E_{n,i}\in\mathcal{U}}$.
\end{thmnonumber}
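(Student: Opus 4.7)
The plan is to reduce the theorem, via Proposition~\ref{prop:profiles_of_G+}, to verifying that $A^\pm$ have the correct $k$-profile closures. Since $(G_n)_n^\infty$ action converges to $A$, it is in particular Cauchy, so by Proposition~\ref{prop:profiles_of_G+} the sequence $(G_n^+)_n^\infty$ is Cauchy too, with limiting $k$-profile closures $X_k\oplus V_k$, where $X_k:=\overline{\mathcal{S}_k(A)}$ and $V_k=\{0\}^k\times[-1,1]^k$. I apply Theorem~\ref{thm:FilterMagic} to $A$ (its hypotheses hold: $\nu$ is atomless by Lemma~\ref{lemma:verticesToInfinity} and separable by assumption) to obtain a $\nu$-filter $\mathcal{F}$, and let $\mathcal{U}$ be any ultrafilter extending it. It then suffices to prove $\overline{\mathcal{S}_k(A^\pm)}=\overline{X_k\oplus V_k}$ for every $k\geq1$. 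Note that $A^\pm$ are genuine $P$-operators because $\norm{A^\pm g}_1\leq\norm{Ag}_1+\nu(\Omega)\norm{g}_\infty$.

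The inclusion $\mathcal{S}_k(A^\pm)\subseteq\mathcal{S}_k(A)\oplus V_k$ is immediate from the definition of $A^\pm$: for $g_1,\dots,g_k\in B_1^{L^\infty}$, the function $A^\pm g_i$ differs from $Ag_i$ only by the constant $\pm\phi_\mathcal{U}(g_i)\in[-1,1]$, so
\[
\mathcal{D}_{A^\pm}(g_1,\dots,g_k)=\mathcal{D}_A(g_1,\dots,g_k)\oplus\bigl(0,\dots,0,\pm\phi_\mathcal{U}(g_1),\dots,\pm\phi_\mathcal{U}(g_k)\bigr)\in\mathcal{S}_k(A)\oplus V_k.
\]
Taking closures gives $\overline{\mathcal{S}_k(A^\pm)}\subseteq\overline{X_k\oplus V_k}$.

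For the reverse inclusion $X_k\oplus V_k\subseteq\overline{\mathcal{S}_k(A^\pm)}$, fix $\mu\in X_k$, $v=(v_1,\dots,v_k)\in[-1,1]^k$, and $\varepsilon>0$. Pick $f_1,\dots,f_k\in B_1^{L^\infty}$ with $d_{LP}(\mathcal{D}_A(f_1,\dots,f_k),\mu)<\varepsilon/3$, and apply Theorem~\ref{thm:FilterMagic} independently to each $f_i$ with target $\pm v_i$ and tolerance $\delta$ (to be chosen small relative to $\varepsilon,k$) to obtain $g_i\in B_1^{L^\infty}$ with $\norm{f_i-g_i}_1<\delta$, $\norm{Af_i-Ag_i}_1<\delta$, and $|\phi_\mathcal{U}(g_i)\mp v_i|<\delta$. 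Writing $F,G\colon\Omega\to\mathbb{R}^{2k}$ for the random variables with laws $\mathcal{D}_A(f_1,\dots,f_k)$ and $\mathcal{D}_A(g_1,\dots,g_k)$, the function $\norm{F-G}_{\ell^1}$ lies in $L^1(\Omega)$ with $L^1$-norm at most $2k\delta$, so Markov's inequality yields $d_{LP}(\mathcal{D}_A(f_1,\dots,f_k),\mathcal{D}_A(g_1,\dots,g_k))\leq\sqrt{2k\delta}$. Since the Lévy-Prokhorov distance is invariant under joint translation and shifting by a vector of Euclidean norm at most $r$ changes it by at most $r$, combining this with the exact shift identity for $A^\pm$ and applying the triangle inequality gives
\[
d_{LP}\bigl(\mathcal{D}_{A^\pm}(g_1,\dots,g_k),\mu\oplus(0,v)\bigr)\leq\sqrt{k}\,\delta+\sqrt{2k\delta}+\varepsilon/3<\varepsilon
\]
for $\delta$ sufficiently small.

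The main obstacle is the transfer from the single-function guarantee of Theorem~\ref{thm:FilterMagic} to a joint $k$-fold approximation; this is a routine bookkeeping exercise relating $L^1$-closeness of coordinates to Lévy-Prokhorov closeness of joint laws via Markov's inequality, but it is the only nontrivial step once Theorem~\ref{thm:FilterMagic} is in hand. The $A^-$ case runs in perfect parallel to the $A^+$ case throughout.
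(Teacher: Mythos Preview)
Your proposal is correct and follows essentially the same route as the paper: reduce via Proposition~\ref{prop:profiles_of_G+} to showing $\overline{\mathcal{S}_k(A^\pm)}=X_k\oplus V_k$, get the easy inclusion directly from the shift identity, and for the hard inclusion invoke Theorem~\ref{thm:FilterMagic} coordinate-wise and pass from $L^1$-closeness to Lévy--Prokhorov closeness via Markov. The only differences from the paper are cosmetic: you aggregate the $2k$ coordinate bounds into a single $\ell^1$-valued Markov step (the paper does it coordinate-wise followed by a union bound), and you work with a single $\varepsilon$-approximant to $\mu$ rather than a sequence $\mu_n\to\mu$.
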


As already mentioned, our limits $A^+$ and $A^-$ cannot be self-adjoint, showing that Proposition~\ref{prop:graphops_remain_graphops} (a), which holds under the assumption of uniform boundedness of the $(p,q)$-norms of a Cauchy sequence, cannot be extended to include $(p,q)=(\infty,1)$. However, the situation is interestingly more subtle with being positivity-preserving. If the original limit $A$ is positivity-preserving then so is $A^+$ because the value $\phi_\mathcal{U}(f)$ is always in the essential range of $f$. But that is to say that the functional $-\phi_\mathcal{U}$ is positivity-\emph{reversing}, and so when we return to the case of the star sequence $(S_n)_n^\infty$, where $A\equiv0$ is the trivial $P$-operator on an atomless space $(\Omega,\nu)$, then the limit $A^+$ is positivity-preserving while $A^-$ is not. This shows that Proposition~\ref{prop:graphops_remain_graphops} (b) cannot be extended to $(p,q)=(\infty,1)$, but raises the question whether a Cauchy sequence of graphops always has a positivity-preserving limit (see Section~\ref{section:closeoff}). In any case, we conclude that the property of being positivity-preserving is not invariant under weak equivalence.

Furthermore, if $A$ is $c$-regular for some $c\in\mathbb{R}$ then $A^+$ is $c+1$-regular while $A^-$ is $c-1$-regular. In other words, Theorem~\ref{thm:TheTheorem} shows that also $c$-regularity is not invariant under weak equivalence, unless we first restrict our consideration from the set of all $P$-operators to the set of $P$-operators with the $(p,q)$-norm bounded above by $b$, for some $p,q\in[1,\infty)\times[1,\infty]$ and fixed $b\in\mathbb{R}_{\geq0}$. This in particular means that Proposition~\ref{prop:graphops_remain_graphops} (c) cannot be extended to include $(p,q)=(1,\infty)$.

\begin{proof}
If $\limsup_{n\to\infty}|V(G_n)|<\infty$ then by Lemma~\ref{lemma:verticesToInfinity}, $(G_n)_n^\infty$ is eventually constant, and so is $(G_n^+)_n^\infty$.

Suppose now that $\limsup_{n\to\infty}|V(G_n)|=\infty$.
To prove that $A^+$ and $A^-$ are action limits of $(G_n^+)_n^\infty$, we need to show that the closures of their $k$-profiles equal the limits of the closures of the $k$-profiles of $G_n^+$. In other words, that
\[\overline{\mathcal{S}_k(A^\pm)}=\lim_{n\to\infty}\overline{\mathcal{S}_k(G_n^+)}\] for every $k$. By Proposition~\ref{prop:profiles_of_G+},
\[\lim_{n\to\infty}\overline{\mathcal{S}_k(G_n^+)}=X_k\oplus V_k\]
where $X_k=\lim_n\overline{\mathcal{S}_k(G_n)}$. We will now show first that $\overline{\mathcal{S}_k(A^\pm)}\subseteq X_k\oplus V_k$ and then that $X_k\oplus V_k\subseteq\overline{\mathcal{S}_k(A^\pm)}$.

For a given $k\in\mathbb{N}$, let $\mu=\mathcal{D}_{A^\pm}(f_1,\dots,f_k)$ be any measure in $\mathcal{S}_k(A^\pm)$, given by some $k$ functions in $B_1^{L^\infty}$. Then
\begin{align*}
\mu=\mathcal{D}_{A^\pm}(f_1,\dots,f_k)&=\mathcal{D}\left(f_1,\dots,f_k,A^\pm f_1,\dots,A^\pm f_k\right)\\
&=\mathcal{D}\left(f_1,\dots,f_k,Af_1\pm\phi_\mathcal{U}(f_1)\cdot\mathds{1},\dots,Af_k\pm\phi_\mathcal{U}(f_k)\cdot\mathds{1}\right)\\
&=\mathcal{D}\left(f_1,\dots,f_k,Af_1,\dots,Af_k\right)\oplus(0,\dots,0,\pm\phi_\mathcal{U}(f_1),\dots,\pm\phi_\mathcal{U}(f_k))\in\mathcal{S}_k(A)\oplus V_k.
\end{align*}
This proves that both $\mathcal{S}_k(A^+)$ and $\mathcal{S}_k(A^-)$ are subsets of $\mathcal{S}_k(A)\oplus V_k$, and hence their closures are subsets of $\overline{\mathcal{S}_k(A)\oplus V_k}=X_k\oplus V_k$. In other words, $\overline{\mathcal{S}_k(A^\pm)}\subseteq X_k\oplus V_k$ as required.

On the other hand, let $\mu$ be a measure in $X_k$. Then there is a sequence $\left(\mu_n=\mathcal{D}_A(f^1,\dots,f^k)\right)_n^\infty$ of measures that converge to $\mu$ in $d_{LP}$. The functions $f^1,\dots,f^k\in B_1^{L^\infty}$ naturally depend on $n$, but we again drop this further index in the interest of readability. We also stress that the upper indices are just indices and do not stand for taking powers. Let also $v=\{0\}^k\times(v_1,\dots,v_k)$ be an element of $V_k$. We will now show that we can approximate the measure $\mu\oplus v$ with elements from $\mathcal{S}_k(A^+)$ as well as from $\mathcal{S}_k(A^-)$.

Let $\mathcal{F}$ be a $\nu$-filter given by Theorem~\ref{thm:FilterMagic} and let $\mathcal{U}$ be any ultrafilter extending it. For every $i\in[k]$, let $f^i_{v_i,\frac{1}{n}}$ and $f^i_{-v_i,\frac{1}{n}}$ be the functions also given by Theorem~\ref{thm:FilterMagic} upon applying it to $f^i$. Then we set
\[\mu_n^{+v}:=\mathcal{D}_{A^+}\left(f^1_{v_1,\frac{1}{n}},\dots,f^k_{v_k,\frac{1}{n}}\right)\in\mathcal{S}_k(A^+)\]
and similarly
\[\mu_n^{-v}:=\mathcal{D}_{A^-}\left(f^1_{-v_1,\frac{1}{n}},\dots,f^k_{-v_k,\frac{1}{n}}\right)\in\mathcal{S}_k(A^-).\]
By the triangle inequality,
\begin{align}\label{ineq:triangle_of_mus}
    d_{LP}\left(\mu_n^{\pm v},\mu\oplus v\right)
    &\leq d_{LP}\left(\mu_n^{\pm v},\mu_n\oplus v\right)+d_{LP}\left(\mu_n\oplus v,\mu\oplus v\right)\nonumber\\
    &=d_{LP}\left(\mathcal{D}_{A^\pm}\left(f^1_{\pm v_1,\frac{1}{n}},\dots,f^k_{\pm v_k,\frac{1}{n}}\right),\mathcal{D}_{A}\left(f^1,\dots,f^k\right)\oplus v\right)+d_{LP}\left(\mu_n,\mu\right)
\end{align}
where the second summand goes to 0 by the assumption that $\mu_n\to\mu$.
Shifting our attention to the first summand, we see that for $A^+$ it is equal to {\small
\begin{align*}
    &d_{LP}\left(\mathcal{D}\left(f^1_{v_1,\frac{1}{n}},\dots,f^k_{v_k,\frac{1}{n}},A^+f^1_{v_1,\frac{1}{n}},\dots,A^+f^k_{v_k,\frac{1}{n}}\right),\mathcal{D}\left(f^1,\dots,f^k,Af^1,\dots,Af^k\right)\oplus v\right)\\
    &=d_{LP}\left(\mathcal{D}\left(f^1_{v_1,\frac{1}{n}},\dots,f^k_{v_k,\frac{1}{n}},Af^1_{v_1,\frac{1}{n}}+\phi_\mathcal{U}(f^1_{v_1,\frac{1}{n}})\mathds{1},\dots,Af^k_{v_k,\frac{1}{n}}+\phi_\mathcal{U}(f^k_{v_k,\frac{1}{n}})\mathds{1}\right),\mathcal{D}\left(f^1,\dots,f^k,Af^1+v_1\mathds{1},\dots,Af^k+v_k\mathds{1}\right)\right)
\end{align*}
}and similarly when we start with $\mathcal{D}_{A^-}\left(f^1_{-v_1,\frac{1}{n}},\dots,f^k_{-v_k,\frac{1}{n}}\right)$ instead of $\mathcal{D}_{A^+}\left(f^1_{v_1,\frac{1}{n}},\dots,f^k_{v_k,\frac{1}{n}}\right)$, we arrive to the first summand being
\begin{multline*}
    d_{LP}\left(\mathcal{D}\left(f^1_{-v_1,\frac{1}{n}},\dots,f^k_{-v_k,\frac{1}{n}}, A^-f^1_{-v_1,\frac{1}{n}},\dots, A^-f^k_{-v_k,\frac{1}{n}}\right),\mathcal{D}\left(f^1,\dots,f^k,Af^1,\dots,Af^k\right)\oplus v\right)\\
    =d_{LP}\Big(\mathcal{D}\left(f^1_{-v_1,\frac{1}{n}},\dots,f^k_{-v_k,\frac{1}{n}},Af^1_{-v_1,\frac{1}{n}}-\phi_\mathcal{U}(f^1_{-v_1,\frac{1}{n}})\mathds{1},\dots,Af^k_{-v_k,\frac{1}{n}}-\phi_\mathcal{U}(f^k_{-v_k,\frac{1}{n}})\mathds{1}\right),\\
    \mathcal{D}\left(f^1,\dots,f^k,Af^1+v_1\mathds{1},\dots,Af^k+v_k\mathds{1}\right)\Big).
\end{multline*}

To bound this first summand, both for $A^+$ and $A^-$, we use that by Theorem~\ref{thm:FilterMagic}, the functions $f^i_{v_i,\frac{1}{n}}-f^i$ and $f^i_{-v_i,\frac{1}{n}}-f^i$ as well as $Af^i_{v_i,\frac{1}{n}}-Af^i$ and $Af^i_{-v_i,\frac{1}{n}}-Af^i$ all have their 1-norms bounded above by $1/n$. Markov's inequality then gives that the $2k$ sets
\begin{align*}
N_i^+&=\left\{\omega\in\Omega : \left|\left(f^i_{v_i,\frac{1}{n}}-f^i\right)(\omega)\right|\geq\delta\right\}\\
M_i^+&=\left\{\omega\in\Omega : \left|\left(Af^i_{v_i,\frac{1}{n}}-Af^i\right)(\omega)\right|\geq\delta\right\}
\end{align*}
as well as the $2k$ sets
\begin{align*}
N_i^-&=\left\{\omega\in\Omega : \left|\left(f^i_{-v_i,\frac{1}{n}}-f^i\right)(\omega)\right|\geq\delta\right\}\\
M_i^-&=\left\{\omega\in\Omega : \left|\left(Af^i_{-v_i,\frac{1}{n}}-Af^i\right)(\omega)\right|\geq\delta\right\}
\end{align*}
all satisfy $\nu\left(N_i^\pm\right)\leq\frac{1}{\delta{n}}$ and $\nu\left(M_i^\pm\right)\leq\frac{1}{\delta{n}}$, where $\delta=\delta(n)$ is a positive number to be chosen later. Put together also with that
\[\phi_\mathcal{U}\left(f^i_{v_i,\frac{1}{n}}\right)\in\left(v_i-\frac{1}{n},v_i+\frac{1}{n}\right),\]
this says that the set {\small
\[
M^+:=\left\{\omega\in\Omega : \exists i\in[k]\text{ such that }\left|\left(f^i_{v_i,\frac{1}{n}}-f^i\right)(\omega)\right|\geq\delta\text{ or }\left|\left(A(f^i_{v_i,\frac{1}{n}}-f^i)+(\phi_\mathcal{U}(f^i_{v_i,\frac{1}{n}})-v_i)\mathds{1}\right)(\omega)\right|\geq\delta+\frac{1}{n}\right\}
\]}
satisfies $M^+\subseteq N_1^+\cup\dots\cup N_k^+\cup M_1^+\cup\dots\cup M_k^+$. Analogously, with{\small
\[
M^-:=\left\{\omega\in\Omega : \exists i\in[k]\text{ such that }\left|\left(f^i_{-v_i,\frac{1}{n}}-f^i\right)(\omega)\right|\geq\delta\text{ or }\left|\left(A(f^i_{-v_i,\frac{1}{n}}-f^i)+(-\phi_\mathcal{U}(f^i_{-v_i,\frac{1}{n}})-v_i)\mathds{1}\right)(\omega)\right|\geq\delta+\frac{1}{n}\right\}
\]}
we have that $M^-\subseteq N_1^-\cup\dots\cup N_k^-\cup M_1^-\cup\dots\cup M_k^-$, and so by the union bound,
\[\nu\left(M^\pm\right)\leq\sum_{i=1}^k\nu\left(N_i^\pm\right)+\sum_{i=1}^k\nu\left(M_i^\pm\right)\leq\frac{2k}{\delta{n}}.\]

The inequality above expresses that on most of $\Omega$, the functions
\begin{align*}
\left(f^1_{\pm v_1,\frac{1}{n}},\dots,f^k_{\pm v_k,\frac{1}{n}},Af^1_{\pm v_1,\frac{1}{n}}\pm\phi_\mathcal{U}(f^1_{\pm v_1,\frac{1}{n}})\mathds{1},\dots,Af^k_{\pm v_k,\frac{1}{n}}\pm\phi_\mathcal{U}(f^k_{\pm v_k,\frac{1}{n}})\mathds{1}\right)&\colon\Omega\to\mathbb{R}^{2k}\\
\text{and}\quad\left(f^1,\dots,f^k,Af^1+v_1\mathds{1},\dots,Af^k+v_k\mathds{1}\right)&\colon\Omega\to\mathbb{R}^{2k}
\end{align*}
output real vectors that are close to one another. In particular, for every $\omega\in\Omega\setminus M^\pm$, we have that all the $2k$ coordinates of
$\left(f^1_{\pm v_1,\frac{1}{n}},\dots,Af^k_{\pm v_k,\frac{1}{n}}\pm\phi_\mathcal{U}(f^k_{\pm v_k,\frac{1}{n}})\mathds{1}\right)(\omega)-(f^1,\dots,Af^k+v_k\mathds{1})(\omega)$ are in $(-\delta-1/n,\delta+1/n)$. This implies that
\begin{align*}
    \left(f^1_{\pm v_1,\frac{1}{n}},\dots,Af^k_{\pm v_k,\frac{1}{n}}\pm\phi_\mathcal{U}(f^k_{\pm v_k,\frac{1}{n}})\mathds{1}\right)^{-1}(U)&\subseteq\left(f^1,\dots,Af^k+v_k\mathds{1}\right)^{-1}\left(U^{\norm{(\delta+\frac{1}{n},\dots,\delta+\frac{1}{n})}}\right)\cup M^{\pm}\\
    \text{and}\quad(f^1,\dots,Af^k+v_k\mathds{1})^{-1}(U)&\subseteq\left(f^1_{\pm v_1,\frac{1}{n}},\dots,Af^k_{\pm v_k,\frac{1}{n}}\pm\phi_\mathcal{U}(f^k_{v_k,\frac{1}{n}})\mathds{1}\right)^{-1}\left(U^{\norm{(\delta+\frac{1}{n},\dots,\delta+\frac{1}{n})}}\right)\cup M^{\pm}
\end{align*}
for any measurable subset $U$ of $\mathbb{R}^{2k}$. Taking the measure $\nu$ of both sides of these inclusions gives
\begin{multline*}
    \mathcal{D}\left(f^1_{\pm v_1,\frac{1}{n}},\dots,Af^k_{\pm v_k,\frac{1}{n}}\pm\phi_\mathcal{U}(f^k)\mathds{1}\right)(U)\\
    =\nu\left(\left(f^1_\pm,\dots,Af^k_{\pm v_k,\frac{1}{n}}\pm\phi_\mathcal{U}(f^k_{v_k,\frac{1}{n}})\mathds{1}\right)^{-1}(U)\right)\leq\nu\left((f^1,\dots,Af^k+v_k\mathds{1})^{-1}\left(U^{\norm{(\delta+\frac{1}{n},\dots,\delta+\frac{1}{n})}}\right)\cup M^{\pm}\right)\\
    \qquad\qquad\qquad\qquad\qquad\qquad\qquad\qquad\qquad\qquad\leq\mathcal{D}\left(f^1,\dots,Af^k+v_k\mathds{1}\right)\left(U^{\left(\delta+1/n\right)\sqrt{2k}}\right)+\frac{2k}{\delta{n}}
\end{multline*}
and
\begin{multline*}
\mathcal{D}\left(f^1,\dots,Af^k+v_k\mathds{1}\right)(U)=\nu\left((f^1,\dots,Af^k+v_k\mathds{1})^{-1}(U)\right)\\
\quad\qquad\qquad\qquad\qquad\qquad\leq\nu\left(\left(f^1_{\pm v_1,\frac{1}{n}},\dots,Af^k_{\pm v_k,\frac{1}{n}}\pm\phi_\mathcal{U}(f^k_{v_k,\frac{1}{n}})\mathds{1}\right)^{-1}\left(U^{\norm{(\delta+\frac{1}{n},\dots,\delta+\frac{1}{n})}}\right)\cup M^{\pm}\right)\\
    \leq\mathcal{D}\left(f^1_{\pm v_1,\frac{1}{n}},\dots,Af^k_{\pm v_k,\frac{1}{n}}\pm\phi_\mathcal{U}(f^k_{v_k,\frac{1}{n}})\mathds{1}\right)\left(U^{(\delta+1/n)\sqrt{2k}}\right)+\frac{2k}{\delta{n}},
\end{multline*}
which means that
\[
d_{LP}\left(\mathcal{D}\left(f^1_{\pm v_1,\frac{1}{n}},\dots,Af^k_{\pm v_k,\frac{1}{n}}\pm\phi_\mathcal{U}(f^k_{\pm v_k,\frac{1}{n}})\mathds{1}\right),\mathcal{D}\left(f^1,\dots,Af^k+v_k\mathds{1}\right)\right)
\leq\max\left\{\left(\delta+\frac{1}{n}\right)\sqrt{2k},\frac{2k}{\delta n}\right\}.
\]
Finally, we set $\delta=\delta(n)$ to be $1/\sqrt{n}$, to get that
\begin{multline*}
d_{LP}\left(\mu_n^{\pm v},\mu_n\oplus v\right)=d_{LP}\left(\mathcal{D}\left(f^1_{\pm v_1,\frac{1}{n}},\dots,Af^k_{\pm v_k,\frac{1}{n}}\pm\phi_\mathcal{U}(f^k_{\pm v_k,\frac{1}{n}})\mathds{1}\right),\mathcal{D}(f^1,\dots,Af^k+v_k\mathds{1})\right)\\
\leq\max\left\{\frac{\left(1+\sqrt{n}\right)\sqrt{2k}}{n},\frac{2k}{\sqrt{n}}\right\}\to0.
\end{multline*}
This finishes the argument started at inequality (\ref{ineq:triangle_of_mus}) that $d_{LP}\left(\mu_n^{\pm v},\mu\oplus v\right)\to0$, and so $X_k\oplus V_k\subseteq\overline{\mathcal{S}_k(A^\pm)}$.
\end{proof}

\begin{remark}[on assumptions in Theorem \ref{thm:TheTheorem}]
    The mild requirement that $(\Omega,\nu)$ be separable could be replaced by that $A$ be positivity-preserving or at least by the existence of a measurable set $E$ with $\nu(E)>0$ such that for every $f\geq0$ which is only non-zero on $E$, $Af\geq0$. In that case, the construction of an appropriate $\nu$-filter $\mathcal{F}$ becomes much easier than in the proof of Theorem \ref{thm:FilterMagic}.
\end{remark}

\section{Two questions}\label{section:closeoff}

We tried to to push the boundaries of Theorem \ref{thm:limitobject} and Proposition \ref{prop:graphops_remain_graphops}, and while we concluded that all three parts of the latter fail unless uniform boundedness of $(p,q)$-norms is assumed, we did not find a Cauchy sequence that would exemplify that the norm requirement could not be dropped from the statement of Theorem \ref{thm:limitobject}.

\begin{question}
Let $(A_n)_n^\infty$ be a Cauchy sequence of graphops. Is there always a $P$-operator $A$ with $\lim_n d_M(A_n,A)=0$?
\end{question}

We have also seen limits of graphop sequences which were not positivity-preserving, yet we were always able to find a positivity-preserving limit too.

\begin{question}
Let $(A_n)_n^\infty$ be a graphop sequence which has an action limit $A$. Is there necessarily an action limit $B$ of $(A_n)_n^\infty$ which is positivity-preserving?
\end{question}

\section*{\normalsize Acknowledgement}\noindent
I would like to thank Jan Grebík and Konrad Królicki for discussions that set me on the right track.

\bibliography{knihovna}
\bibliographystyle{plain}

\end{document}